\newtheorem{theorem}{Theorem}
\newtheorem{remark}{Remark}
\newtheorem{proposition}{Proposition}
\newtheorem{lemma}{Lemma}
\newtheorem{corollary}{Corollary}
\newtheorem{claim}{Claim}
\newtheorem{definition}{Definition}
\newtheorem{maintheorem}{Theorem}
\def\ie{{\em i.e.,\ }}
\def\eg{{\em e.g.\ }}
\newfont\bbf{msbm10 at 12pt}
\def\eps{\varepsilon}
\def\R{{\mathbb R}}
\def\N{{\mathbb N}}
\def\E{{\mathbb E}}
\def\M{{\mathcal M}}
\def\W{{\mathcal W}}
\def\sm{\setminus}
\def\le{\leqslant}
\def\ge{\geqslant}
\def\htop{h_{\text{top}}}
\def\ptop{P_{\text{top}}}
\def\Pconf{ P_{\mbox{\rm\tiny Conf}} }
\def\dimhyp{\dim_{\text{hyp}}}
\def\drift{\mbox{\bf \it Dr}}
\newdimen\AAdi%
\newbox\AAbo%
\def\AArm{\fam0 }
\def\AAk#1#2{\setbox\AAbo=\hbox{#2}\AAdi=\wd\AAbo\kern#1\AAdi{}}%
\def\AAr#1#2#3{\setbox\AAbo=\hbox{#2}\AAdi=\ht\AAbo\raise#1\AAdi\hbox{#3}}%
\def\1{{\AArm 1\AAk{-.8}{I}I}}%
\begin{document}

\title[Transience and thermodynamics for interval maps]
{Transience and thermodynamic formalism for infinitely
branched interval maps}

\subjclass[2000]{
37E05  	
37D35  	
60J10  	
37D25  	
37A10  	
}
\keywords{Transience, thermodynamic formalism,  interval maps, Markov chains,  equilibrium states, non-uniform hyperbolicity}

\author{Henk Bruin}\address{
Department of Mathematics,
University of Surrey,
Guildford, Surrey, GU2 7XH,
UK }
\email{h.bruin@surrey.ac.uk}
\urladdr{http://personal.maths.surrey.ac.uk/st/H.Bruin/}

\author{Mike Todd}\address{
Mathematical Institute,
University of St Andrews,
North Haugh,
St Andrews,
Fife,
KY16 9SS,
Scotland}
\email{mjt20@st-andrews.ac.uk}
\urladdr{http://www.mcs.st-and.ac.uk/$\sim$miket/index.html}

\date{Version of \today}
\thanks{
The hospitality of the Mittag-Leffler Institute
on Djursholm, Stockholm
(2010 Spring programme on Dynamics and PDEs) is gratefully acknowledged.
MT was partially supported by NSF grants DMS 0606343 and DMS 0908093.  He would also like to thank A Hoffman, N Dobbs and G Iommi for useful conversations in the early stages of this project.
}

\begin{abstract}
We study a one-parameter family of  countably
piecewise linear interval maps, which, although Markov, fail the `large image property'.  This leads to conservative as well as dissipative behaviour for different maps in the family with respect to Lebesgue.  We investigate the transition between these two types, and study the associated thermodynamic formalism, describing in detail the second order phase transitions
(\ie the pressure function is $C^1$ but not $C^2$ at the phase transition) that occur in transition to dissipativity.  We also study the various natural definitions of pressure which arise here, computing these using elementary recurrence relations.
\end{abstract}

\maketitle

\section{Introduction}\label{sec:intro}

The aim of this paper is to understand thermodynamic formalism
of a simple class of infinitely branched uniformly expanding interval maps with suboptimal mixing properties.
Given $\lambda\in (0,1)$, our system is a countably piecewise linear interval map $F_\lambda:(0,1] \to (0,1]$, defined as
\begin{figure}[ht]
\unitlength=4mm
\begin{picture}(9,10)(-10,-0.5) \let\ts\textstyle
\put(-21,6){
$F_\lambda(x):= \begin{cases}\
\frac{x-\lambda}{1-\lambda} & \text{ if } x\in W_1,\\[2mm]
\ \frac{x-\lambda^{n}}{\lambda(1-\lambda)} & \text{ if } x\in W_n, \quad n \ge 2,
\end{cases}$
}
\put(-20.5,2.5){for the intervals $W_n:=(\lambda^{n}, \lambda^{n-1}]$,}
\put(-20.5,1.4){which form a Markov partition.}
\thinlines
\put(0,0){\line(1,0){10}}\put(0,10){\line(1,0){10}}
\put(0,0){\line(0,1){10}} \put(10,0){\line(0,1){10}}
\put(0,0){\line(1,1){10}}
\thicklines
\put(7.5,0){\line(1,4){2.5}} \put(8.3,-0.9){$\tiny W_1$}
\put(5.5,0){\line(1,5){2}} \put(6.0,-0.9){$\tiny W_2$}
\put(4,0){\line(1,5){1.5}}  \put(4.2,-0.9){$\tiny W_3$}
\put(2.9,0){\line(1,5){1.1}}  \put(2.7,-0.9){$\tiny W_4$}
\put(2.1,0){\line(1,5){0.8}}  \put(1.2,-0.9){$\ldots$}
\put(1.52,0){\line(1,5){0.58}}
\put(1.095,0){\line(1,5){0.425}}
\put(0.791,0){\line(1,5){0.304}}
\put(0.572,0){\line(1,5){0.219}}
\put(0.4138,0){\line(1,5){0.1582}}
\put(0.2974,0){\line(1,5){0.1164}}
\put(0.21464,0){\line(1,5){0.08276}}
\end{picture}
\end{figure}

This map was proposed by van Strien to Stratmann as a model for an
induced map of Fibonacci unimodal map. 
Stratmann \& Vogt \cite{StraVogt} computed the Hausdorff dimension of
points that converge to $0$ under iteration of $F_\lambda$
(and in fact this set has full Lebesgue measure for $\lambda > \frac12)$,
which has a bearing on the existence and nature of wild attractors in
interval dynamics, \cite{BKNS}.
Bruin showed (unpublished), that the map $F_\lambda$ is indeed an induced
map of a countably piecewise linear unimodal map, but we intend to come back
to this issue in a forthcoming paper.
The goal of this paper is to investigate the thermodynamic properties of
$((0,1], F_\lambda)$ which is of interest in its own right.
A hint that piecewise expanding maps
with countably many pieces can be Lebesgue dissipative was made early
on by Lasota \& Yorke \cite[page 487]{LY}.
A large part of the current theory of Markov maps with
infinitely many branches relies on a ``large image property'',
which $F_\lambda$ does not satisfy. In contrast,
the distinction between dissipative (transient)
and conservative (recurrent) behaviour leads to
second order phase transition (see below) at 
$t = t_0=\frac{-\log2}{\log\lambda}$
for the `geometric' potential $\Phi_t = -t \log |F'_\lambda|$
(which is assumed to be the
appropriate one-sided derivative at each discontinuity point $\lambda^n$).

Our first main theorem describes the existence
of $(\phi-p)$-conformal reference measures, see Definition~\ref{def:conformal}
for their precise definition. Let
\begin{equation}\label{eq:Pconf}
\Pconf(\phi):=\inf\left\{p\in \R:\text{there exists a } (\phi-p)\text{-conformal measure}\right\}.
\end{equation}
When the potential is $\Phi_t$, for brevity we will also call a 
$(\Phi_t-p)$-conformal measure a $(t,p)$-conformal measure. 

Letting $\psi(t) := \frac{(1-\lambda)^t}{1-\lambda^t}$,
we have the following expression for $\Pconf(\Phi_t)$.

\begin{maintheorem}\label{thm:main diss/cons}
Given $\lambda\in (0,1)$ and $t \in \R$,
$$
\Pconf(\Phi_t) =  \left\{
\begin{array}{ll}
\log \psi(t)  & \text{ if } \lambda^t \le \frac12;\\[2mm]
\log[4\lambda^t(1-\lambda)^t] \qquad & \text{ if } \lambda^t \ge \frac12.
\end{array} \right.
$$
If $p = \Pconf(\phi_t)$ then there exists a $(t,p)$-conformal measure $m_{t,p}$.  This measure is
$$
\left\{ \begin{array}{ll}
\text{conservative } & \text{ if } \lambda^t \le \frac12, \\
\text{dissipative } & \text{ if } \lambda^t > \frac12.
\end{array} \right.
$$
If $p \neq \Pconf(\Phi_t)$, then $m_{t,p}$ is dissipative.
\end{maintheorem}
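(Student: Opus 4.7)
The plan is to reduce the conformal-measure condition to a two-term linear recurrence on the weights $a_n := m(W_n)$. Because $F_\lambda$ is piecewise linear with $|F'_\lambda|$ equal to $(1-\lambda)^{-1}$ on $W_1$ and $[\lambda(1-\lambda)]^{-1}$ on $W_n$ for $n\ge 2$, and $F_\lambda(W_1) = (0,1]$ while $F_\lambda(W_n) = (0,\lambda^{n-2}]$ for $n\ge 2$, the relation $m(F_\lambda(A)) = \int_A e^{p-\Phi_t}\,dm$ applied branchwise gives
\begin{equation*}
a_1 = \beta M, \qquad a_n = \alpha S_{n-1}\ (n\ge 2),\qquad \alpha := e^{-p}[\lambda(1-\lambda)]^t,\ \beta := e^{-p}(1-\lambda)^t = \alpha/\lambda^t,
\end{equation*}
where $M := m((0,1])$ and $S_n := \sum_{k\ge n} a_k = m((0,\lambda^{n-1}])$. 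Using $a_n = S_n - S_{n+1}$ this becomes the linear recurrence $S_{n+1} = S_n - \alpha S_{n-1}$ $(n\ge 2)$ with $S_1 = M$, $S_2 = (1-\beta)M$; downward continuity forces $S_n \to 0$ for any genuine measure.

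The existence question becomes: for which $p$ does this recurrence admit a non-negative solution? The characteristic equation $x^2 - x + \alpha = 0$ has real roots $x_\pm = \tfrac12(1\pm\sqrt{1-4\alpha})$ iff $\alpha \le 1/4$; complex roots for $\alpha > 1/4$ force $S_n$ to oscillate in sign, so no measure exists. For $\alpha < 1/4$ the initial conditions give $S_n = Ax_+^n + Bx_-^n$ with $A = M(\lambda^t - x_-)/[\lambda^t(x_+ - x_-)]$; since $x_+^n$ dominates, the critical constraint is $A \ge 0$, i.e.\ $\lambda^t \ge x_-$. The extremal case $A = 0$ forces $x_- = \lambda^t$, $x_+ = 1-\lambda^t$, hence $e^p = \psi(t)$; this is consistent with $\alpha \le 1/4$ precisely when $\lambda^t \le 1/2$. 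If $\lambda^t > 1/2$, then $A \ge 0$ holds throughout the admissible region and the binding constraint becomes the double root $\alpha = 1/4$, giving $e^p = 4[\lambda(1-\lambda)]^t$; the degenerate recurrence then yields $S_n = [\lambda^{-t} + (2-\lambda^{-t})n]\,2^{-n}M$, non-negative precisely because $\lambda^t \ge 1/2$. This establishes the claimed formula for $\Pconf(\Phi_t)$ and produces explicit measures $m_{t,p}$ for each $p \ge \Pconf(\Phi_t)$.

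For the conservative/dissipative dichotomy I interpret the measure dynamically as a Markov chain on the indices $\{1,2,\ldots\}$ with transition probabilities $p_{n,k} = \alpha a_k/a_n$ (for $n\ge 2$, $k\ge n-1$) and $p_{1,k} = a_k/M$; then $m$ is conservative iff this chain is recurrent and dissipative iff it is transient. At $p = \log\psi(t)$ with $\lambda^t \le 1/2$, the explicit $a_n = (1-\lambda^t)\lambda^{t(n-1)}M$ turns the chain into a reflected random walk on $\N$ with i.i.d.\ increments $J - 1$ ($J$ geometric of parameter $1-\lambda^t$), whose mean drift $(2\lambda^t - 1)/(1-\lambda^t) \le 0$ and finite variance force recurrence (Chung--Fuchs handles the zero-drift boundary $\lambda^t = 1/2$). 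For $p > \Pconf(\Phi_t)$ the asymptotics $a_n \sim C x_+^n$ with $x_+ > 1/2$ give an asymptotically geometric transition tail of strictly positive mean drift $(2x_+ - 1)/(1-x_+)$, hence transience. At the critical $p = \log[4\lambda^t(1-\lambda)^t]$ with $\lambda^t > 1/2$, the degenerate asymptotics $a_n \sim c n \cdot 2^{-n}$ give zero limiting drift but a state-dependent correction of order $2/n$ that exceeds half the limiting variance, so Lamperti's criterion yields transience. In every transient case, $m$-a.e.\ orbit drifts to $0$, \ie $m$ is dissipative.

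The main obstacle will be the sign bookkeeping connecting the two branches of $\Pconf(\Phi_t)$ across $\lambda^t = 1/2$ (both reduce to $\log[2(1-\lambda)^t]$, so $\Pconf$ is continuous there, as expected of a second-order phase transition) and the borderline recurrence arguments: at $\lambda^t = 1/2$ on the conservative side the strict-drift argument fails and one must invoke a finite-variance criterion, while at the critical $p$ on the dissipative side one needs Lamperti's estimates to extract transience from the $1/n$ drift correction produced by the double root.
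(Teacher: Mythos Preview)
Your reduction to the two-term recurrence $S_{n+1}=S_n-\alpha S_{n-1}$ and the subsequent drift/random-walk analysis is essentially the paper's route: the paper derives the same relation (in terms of $x_k=a_k$ rather than tails $S_k$), obtains the same characteristic equation $r^2-r+c=0$, and then reads off conservativity/dissipativity from the sign of the drift of the induced Markov chain on the state indices. So the core architecture coincides.

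The differences are all at the edges. First, for the lower bound on $\Pconf$ the paper does not use your sign-oscillation argument (complex roots force $S_n<0$); instead it invokes a periodic-point count from the later spectral analysis of $B^t$ to rule out $p<\log 4[\lambda(1-\lambda)]^t$. Your argument is more direct and self-contained. Second, for the zero-drift boundary $\lambda^t=\tfrac12$ the paper does not use Chung--Fuchs; it defers to a separate section where it computes the local partition function $Z_n(\Phi_t,[W_1])$ explicitly via binomial identities, obtains $Z_n\asymp n^{-1/2}$, and concludes recurrence (and null recurrence) through Sarig's criterion. Third, for the critical $p$ with $\lambda^t>\tfrac12$ the paper's drift formula is written for the distinct-root solution $A_+r_+^j+A_-r_-^j$ and yields positivity of the drift from the term $(2r_+-1)/(1-r_+)^2$; at the double root $r_\pm=\tfrac12$ this leading term vanishes, so your Lamperti analysis of the $2/n$ correction against the limiting variance is the cleaner way to close that case. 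Your approach buys a uniform probabilistic toolkit for all boundary cases; the paper's buys explicit combinatorics that feed into the null-recurrence statement as well. One point to tighten in your write-up: the chain is not literally a reflected i.i.d.\ walk because the transition law at state $1$ differs from that at states $\ge 2$, so when invoking Chung--Fuchs you should spell out the standard comparison (recurrence of the free walk on $\Z$ with increments $J-1$ implies a.s.\ hitting of state $1$ from any state $\ge 2$, which suffices for recurrence of the irreducible chain).
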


As we are mostly interested in the case $p = \Pconf(\Phi_t)$, we
 will often abbreviate $m_t = m_{t,p}$ when $p = \Pconf(\Phi_t)$.
We define the \emph{pressure} as
\begin{equation}\label{eq:pressure}
P(\Phi_t) := \sup\left\{ h(\mu) + \int \Phi_t~d\mu : \mu \in \M, \  -\int \Phi_t~d\mu<\infty\right\},
\end{equation}
where the supremum is taken over the set $\M$ of $F$-invariant probability
measures.  A measure $\mu \in \M$ such that $h(\mu) + \int \Phi_t~d\mu=P(\Phi_t)$ is called an \emph{equilibrium state} for $\Phi_t$.

The behaviour of the function $t\mapsto P(\Phi_t)$ is important for understanding the statistical properties of the system.  In the classical hyperbolic case, this function is real analytic \cite{Ru_book}.  We say that the pressure $t\mapsto P(\Phi_t)$ has a \emph{$k$-th order phase transition at $t_0$} if this function is $C^{k-1}$, but not $C^k$ at $t_0$.  In the following theorem, we see that our pressure function has a second order phase transition at $t_0=\frac{-\log2}{\log\lambda}$.

\begin{maintheorem}\label{thm:main pressure formula}
Given $\lambda\in (0,1)$ and $t \in \R$,
$$
P(\Phi_t) =  \left\{
\begin{array}{ll}
\log \psi(t) & \text{ if } \lambda^t \le \frac12;\\[2mm]
\log[4\lambda^t(1-\lambda)^t] \qquad & \text{ if } \lambda^t \ge \frac12,
\end{array} \right.
$$
so there is a second order phase transition at $t_0=\frac{-\log2}{\log\lambda}$.
Moreover, there is an equilibrium state $\mu_t$ for $\Phi_t$ if  $\lambda^t<1/2$.  If such an equilibrium state exists, it is unique and is absolutely continuous w.r.t.\ $m_t$. There is no equilibrium state for $\Phi_t$ when $\lambda^t \ge 1/2$; in particular, there is no measure of maximal entropy.
\end{maintheorem}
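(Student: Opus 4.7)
The plan is to combine Theorem~A with standard (countable) Markov thermodynamics. Since Theorem~A supplies a $(t,p)$-conformal measure $m_t$ for $p=\Pconf(\Phi_t)$, a Ledrappier-style Jensen/Rokhlin argument gives, for every $\mu\in\M$ with $-\int\Phi_t\,d\mu<\infty$, the bound $h(\mu)+\int\Phi_t\,d\mu\le p$, with equality forcing $\mu\ll m_t$; indeed when $\mu=h\,m_t$ the Jacobian of $F$ with respect to $\mu$ equals $e^{p-\Phi_t}(h\circ F)/h$, and the $\log h$ terms telescope under invariance to give $h(\mu)+\int\Phi_t\,d\mu=p$ exactly. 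This immediately yields $P(\Phi_t)\le\Pconf(\Phi_t)$ and tells us that any equilibrium state must be absolutely continuous with respect to $m_t$. Hence the theorem reduces to: (i) constructing such a $\mu_t$ in the conservative regime $\lambda^t<1/2$; (ii) showing no invariant probability $\ll m_t$ exists when $\lambda^t\ge 1/2$; (iii) showing $P(\Phi_t)$ still attains $\log[4\lambda^t(1-\lambda)^t]$ when $\lambda^t>1/2$.

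For (i), the conformality of $m_t$ together with the branch structure pins down $m_t(W_k)=(1-\lambda^t)\lambda^{t(k-1)}$ (this drops out of the computation behind Theorem~A). Seeking a piecewise-constant density $h|_{W_k}=b_k$ solving $\L_{\Phi_t-p}h=h$ and using that the preimages of any $y\in W_k$ come from exactly $W_1,\dots,W_{k+1}$, the fixed-point equation reduces to
\[
b_k = e^{-p}(1-\lambda)^t\Bigl[b_1+\lambda^t\sum_{n=2}^{k+1}b_n\Bigr].
\]
Consecutive differences turn this into a two-term linear recurrence whose slower-growing solution is $b_k=c(1-\lambda^t)^{-k}$, and the normalising sum $\sum_k b_k\,m_t(W_k)\propto\sum_k(\lambda^t/(1-\lambda^t))^{k-1}$ converges precisely when $\lambda^t<1/2$. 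This produces $\mu_t\in\M$ with $\mu_t\ll m_t$, and the Rokhlin identity from the first paragraph gives $h(\mu_t)+\int\Phi_t\,d\mu_t=\log\psi(t)$. Uniqueness then follows from exactness of $(F,\mu_t)$ via the induced full shift on $W_1$ together with the rigidity noted above.

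For (ii) and (iii): when $\lambda^t>1/2$, Theorem~A asserts $m_t$ is dissipative, so it carries no invariant probability $\ll m_t$ (a $\sigma$-finite dissipative measure has a wandering set of full measure), and by the rigidity this excludes equilibrium states altogether, in particular ruling out a measure of maximal entropy (the case $t=0$). At the boundary $\lambda^t=1/2$ the measure $m_t$ is still conservative, but the slower-growing solution to the recurrence above becomes $b_k=c\cdot 2^k$, for which $\sum_k b_k\,m_t(W_k)=\infty$, so again no absolutely continuous invariant probability exists. To prove $P(\Phi_t)\ge\log[4\lambda^t(1-\lambda)^t]$ I would exhaust by finite Markov subshifts: restrict to the SFT on $\{W_1,\dots,W_N\}$ with transition rule $W_n\to W_k$ iff $k\ge\max(n-1,1)$. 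Each is mixing, with $\Phi_t$ locally constant on branches, so its pressure equals $\log\rho_N$ for an explicit Perron-Frobenius spectral radius; analysing the truncated version of the recurrence above shows $\rho_N\uparrow 4\lambda^t(1-\lambda)^t$ as $N\to\infty$.

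Finally, at $t_0=-\log 2/\log\lambda$ the two branches agree since $\psi(t_0)=(1-\lambda)^{t_0}/(1/2)=4\lambda^{t_0}(1-\lambda)^{t_0}$. Differentiating gives $\frac{d}{dt}\log\psi(t)\big|_{t_0}=\log(1-\lambda)+\lambda^{t_0}\log\lambda/(1-\lambda^{t_0})=\log[\lambda(1-\lambda)]$, matching the derivative $\log[\lambda(1-\lambda)]$ of the dissipative branch; however the second derivative of the dissipative branch is $0$, while that of $\log\psi$ is $\lambda^{t_0}(\log\lambda)^2/(1-\lambda^{t_0})^2=2(\log\lambda)^2>0$, giving the second-order phase transition. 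The main obstacle I anticipate is making rigorous the identification $P(\Phi_t)\le\Pconf(\Phi_t)$ together with $\rho_N\uparrow 4\lambda^t(1-\lambda)^t$ in the dissipative regime: without the large image property the Mauldin-Urbanski/Sarig variational principles do not apply off the shelf, so one must argue via the exhausting SFTs while controlling invariant measures whose mass escapes to infinity.
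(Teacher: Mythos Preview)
Your overall architecture---matrix/SFT exhaustion for the lower bound, explicit eigenfunction construction for $\mu_t$, and derivative matching at $t_0$---tracks the paper closely. Two points of divergence deserve comment. First, your worry that ``the Mauldin--Urba\'nski/Sarig variational principles do not apply off the shelf'' is misplaced: the potential $\Phi_t$ is locally constant on $1$-cylinders (the map is piecewise linear), so it is trivially weakly H\"older with zero variation, and the shift is topologically mixing; Sarig's Variational Principle $P_G(\Phi_t)=P(\Phi_t)$ therefore applies directly. The paper exploits this, identifying $P_G$ with the limiting spectral radius of the truncated matrices $B^t_K$ (its Theorem on $x(t)$), which is exactly your $\rho_N$ computation. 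So your upper bound $P(\Phi_t)\le\Pconf(\Phi_t)$ is available for free, and your exhausting-SFT lower bound is essentially the paper's Approximation Property combined with its eigenvalue analysis.

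The genuine gap is your ``rigidity'' claim that equality $h(\mu)+\int\Phi_t\,d\mu=p$ \emph{forces} $\mu\ll m_t$. Your ``indeed when $\mu=h\,m_t$ \dots'' only establishes the converse. In a Ledrappier argument the equality case typically pins down the Jacobian $J_\mu=e^{p-\Phi_t}\cdot(h\circ F)/h$ $\mu$-a.e., but to pass from this to $\mu\ll m_t$ you need uniqueness of the conformal measure or an equivalent statement---and here the lack of the large image property (so $L_{\Phi_t-p}1$ is unbounded) makes the standard route delicate, especially for measures with $\sum_k k\,\mu(W_k)=\infty$, where even the cohomology $\log h-\log h\circ F$ need not be $\mu$-integrable. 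The paper handles non-existence by a genuinely different mechanism: it induces on a single state to obtain a full shift (which has BIP), invokes Sarig's positive-recurrence results there, and then uses Kac's lemma to conclude that any equilibrium state for $\Phi_t$ must arise from the Ruelle--Perron--Frobenius data $(h_{\Phi_t},m_{\Phi_t})$, contradicting transience or null recurrence. This inducing step is the missing idea in your proposal; without it, the exclusion of equilibrium states for $\lambda^t\ge 1/2$ is not complete.
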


For $t = 0$ we arrive at the topological entropy $\htop(F_\lambda) = \log 4$
for all $\lambda \in (0,1)$. It may be a bit surprising that a transitive
map with countably many (expanding) branches has finite entropy,
but this phenomenon has been observed before, \eg \cite{Ruette, MisRai, BoSu}.
The non-existence of a measure of maximal entropy goes back to Gurevich's
paper \cite{Gushiftent}, which basically says that the only
measure of maximal entropy is given by a normalised eigenvector
(with eigenvalue $1$) for the infinite transition matrix associated with the
Markov shift. In our terminology, this is the matrix $A^t$ with $t = 0$
(see \eqref{eq:At}), and the required eigenvector is indeed non-existent
because conformal measure $m_{t,p}$ is dissipative for $t = 0$, $p = \log 4$.
Based on work by Gurevich \cite{Gushiftent}
and Salama \cite{Salama}, Ruette \cite{Ruette} presents
examples of $C^r$ interval maps with infinitely many branches,
finite topological entropy but no measure of maximal entropy.

For the case when the dynamical system $(X, f)$ is a countable Markov shift, and $\phi:X\to \R$ is a sufficiently smooth potential, Sarig  \cite{Sarnull} defined \emph{recurrence}, and its converse, \emph{transience}, in terms of local partition functions (see Section~\ref{sec:pressure}).  If the system is recurrent, then he gave a further condition on such functions under which the  system is \emph{positive recurrent}; the converse of which is \emph{null recurrent}.  He proved that in this context, recurrence is equivalent to the existence of a conservative (see Definition~\ref{def:con} below) $(\phi-P(\phi))$-conformal measure $m$ (see Theorem~\ref{thm:RPF}).  
Moreover, if the system is recurrent, it is positive recurrent if there exists an $f$-invariant probability measure $\mu\ll m$, and null recurrent otherwise.
In \cite{ITtransient}, it was shown that it is reasonable and useful, in order to apply these ideas beyond the realm of shift spaces, to take the conditions on the existence (or non-existence) of such conformal and invariant measures as the definition of the two kinds of recurrence.  Therefore, we can immediately interpret Theorems~\ref{thm:main diss/cons} and \ref{thm:main pressure formula} in terms of recurrence/transience as: $((0,1], F_\lambda, \Phi_t)$ is
\begin{itemize}
\item positive recurrent if $\lambda^t\in (0,1/2)$;
\item null recurrent if $\lambda^t=1/2$;
\item transient if $\lambda^t\in (1/2, 1)$.
\end{itemize}

We can also compute the {\em hyperbolic dimension}
\[
\dimhyp(F_\lambda) := \sup\{ \dim_H(\Lambda) : \Lambda \text{ is
compact, $F_\lambda$-invariant and } \Lambda \not\ni 0 \}
\]
Our abuse of the word hyperbolic here is
motivated by smooth one-dimensional dynamics, where
$0$ is the critical point.
The hyperbolic dimension then refers to taking the supremum over all
invariant closed sets that are bounded away from critical points,
so at every iterate of the map, neighbourhoods of points in hyperbolic
sets map to ``large scale''.
In the usual cases of topologically transitive interval maps this value
is equal to $1$, but the presence of dissipation in our systems can give
 $\dimhyp(F_\lambda)<1$ for $\lambda\in (1/2, 1)$, as in the next theorem.
In addition, for $\lambda\in (0,1)$, we can define the \emph{escaping set} as
$$\Omega_{\lambda}: = \{ x \in [0,1] : \lim_{n \to \infty} F_\lambda^n(x) =
0 \}.$$
The result on the size of the escaping set stated below was proved in \cite{StraVogt}; our more general proof captures the hyperbolic dimension as well.

\begin{maintheorem} The Hausdorff dimension of hyperbolic and escaping sets are
$$
\dimhyp(F_{1-\lambda}) = \dim_H(\Omega_\lambda) =
\left\{ \begin{array}{ll}
- \frac{\log 4}{\log[\lambda(1-\lambda)]} \qquad & \text{ if } \lambda \le \frac12;\\
1 & \text{ if } \lambda \ge \frac12.
\end{array} \right.
$$
\label{thm:main hyp dim}
\end{maintheorem}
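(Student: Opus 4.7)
The plan is to combine the pressure formula of Theorem~\ref{thm:main pressure formula} with Bowen's formula on the hyperbolic side, and a direct symbolic construction on the escaping side. For $\dimhyp(F_{1-\lambda})$ I exhaust by the finite Markov subsystems $\Lambda_N=\{x:a_n(x)\le N\text{ for all }n\ge 0\}$, where $a_n(x)$ indexes the partition element of $F_{1-\lambda}$ containing $F_{1-\lambda}^n(x)$. Each $\Lambda_N$ is compact, forward-invariant, and bounded away from $0$, so the finite Bowen formula produces $\dim_H\Lambda_N$ as the unique root $t_N$ of the restricted pressure $P_N(\Phi_t)=0$. Standard monotone approximation yields $P_N(\Phi_t)\nearrow P(\Phi_t)$ and $t_N\to t_\infty$, where $t_\infty$ solves $P(\Phi_t)=0$ for $F_{1-\lambda}$. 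Applying Theorem~\ref{thm:main pressure formula} with parameter $1-\lambda$ locates this root at $-\log 4/\log[\lambda(1-\lambda)]$ in the transient regime $\lambda\le 1/2$ and at $t=1$ in the conservative regime $\lambda\ge 1/2$ (using $\lambda+(1-\lambda)=1$ to see that the $\psi$-value at $t=1$ equals $1$), yielding the announced formula.

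For $\dim_H(\Omega_\lambda)$ with $\lambda\ge 1/2$, \cite{StraVogt} shows that $\Omega_\lambda$ has full Lebesgue measure, hence full dimension. For $\lambda\le 1/2$, set $t^*=-\log 4/\log[\lambda(1-\lambda)]$. Upper bound: for each $K\ge 2$ set $E_K=\{x:a_n(x)\ge K\text{ for all }n\ge 0\}$. Any $x\in\Omega_\lambda$ has $a_n(x)\ge K$ eventually, so $\Omega_\lambda\subseteq\bigcup_{M\ge 0}F_\lambda^{-M}(E_K)$; since $F_\lambda$ is piecewise linear, $\dim_H F_\lambda^{-M}(E_K)=\dim_H E_K$. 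The restricted system on $E_K$ is a countable Markov subshift with admissibility $a\to b$ iff $b\ge\max(K,a-1)$; after relabelling, this is the walk on $\{0,1,2,\ldots\}$ with step set $\{-1,0,1,2,\ldots\}$, for which a classical cycle-lemma computation shows that the number of length-$N$ loops at the base state is the Catalan number $C_N$. Because $a_i\ge K\ge 2$ throughout, each transition carries the same derivative $[\lambda(1-\lambda)]^{-1}$, so the Gurevich partition function is $Z_N=C_N[\lambda(1-\lambda)]^{tN}$, the Gurevich pressure is $\log[4\lambda^t(1-\lambda)^t]$, and its unique root is $t^*$. Standard Bowen-type formulas for countable Markov shifts then give $\dim_H E_K=t^*$, hence $\dim_H(\Omega_\lambda)\le t^*$.

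For the lower bound, I construct a Cantor subset $B\subseteq\Omega_\lambda$ of dimension arbitrarily close to $t^*$ by concatenating admissible Catalan blocks of length $N$ inside $E_{K_m}$, each starting and ending at state $K_m$, where $(K_m)_{m\ge 1}$ is strictly increasing so that $a_n\to\infty$ automatically along $B$. Each block contributes $C_N$ admissible continuations, and every cylinder at level $MN$ has length $\asymp[\lambda(1-\lambda)]^{MN}$. Placing uniform mass on cylinders and applying the mass distribution principle, for any $\delta<t^*$ the ratio $\mu(B(x,r))/r^\delta$ at scale $r\asymp[\lambda(1-\lambda)]^{MN}$ is controlled by $\bigl((C_N)^{-1/N}\,[\lambda(1-\lambda)]^{-\delta}\bigr)^{MN}$. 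Since $(C_N)^{1/N}\to 4$ while $[\lambda(1-\lambda)]^{-\delta}<4$ exactly when $\delta<t^*$, choosing $N$ large enough makes this base strictly less than $1$, so $\mu(B(x,r))=O(r^\delta)$ and $\dim_H B\ge\delta$; letting $\delta\uparrow t^*$ concludes $\dim_H(\Omega_\lambda)\ge t^*$.

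The principal obstacle is tracking the polynomial correction in $C_N\sim 4^N/(\sqrt{\pi}\,N^{3/2})$ through the mass distribution argument, which is handled by allowing the block length $N$ to grow as $\delta\uparrow t^*$; aside from that, everything is a combination of Theorem~\ref{thm:main pressure formula} with standard pressure-dimension techniques for finite and countable Markov shifts.
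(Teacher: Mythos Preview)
Your argument for $\dimhyp(F_{1-\lambda})$ is essentially the paper's own approach in disguise: both exhaust by the finite Markov subsystems $\Lambda_N$ and identify the dimension as the zero of the pressure on $\Lambda_N$, which converges to the zero of the full pressure. The paper phrases this in terms of the leading eigenvalue of the cropped matrices $B^t_K$ (Theorem~\ref{thm:B ev} and Proposition~\ref{prop:hypdim}); you invoke Theorem~\ref{thm:main pressure formula} directly, which amounts to the same thing once Corollary~\ref{cor:main all press} is established.

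For $\dim_H(\Omega_\lambda)$ with $\lambda\le\frac12$, your route is genuinely different. The paper argues by \emph{parameter comparison}: it takes an auxiliary parameter $\gamma\in(\frac12,1-\lambda)$ for which the map is Lebesgue-dissipative, so the escaping set has full dimension there, and then transfers covers between the two parameters using the drift estimate and the fact that the symbolic coding is parameter-independent. Your approach is a direct \emph{combinatorial} one: you observe that on $E_K$ the derivative is constant, count loops by Catalan numbers (indeed the number of length-$N$ nonnegative paths with steps in $\{-1,0,1,\dots\}$ returning to the origin is $C_N$), read off the Gurevich pressure $\log 4+t\log[\lambda(1-\lambda)]$, and then build the lower bound via a Moran-type Cantor set of concatenated Catalan blocks. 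This is more self-contained and makes the role of the constant $4$ transparent without the detour through a second parameter; the paper's argument, on the other hand, avoids the somewhat delicate invocation of Bowen's formula on a countable shift that fails the large image property.

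Two points to tighten. First, your assertion that level-$MN$ cylinders have length $\asymp[\lambda(1-\lambda)]^{MN}$ is not quite right: the length carries an extra factor $\lambda^{e_{MN-1}}\asymp\lambda^{K_M}$, which tends to $0$ since $K_M\to\infty$. Provided $K_M$ grows slowly (say $K_M=K_0+M$), this is a subexponential correction absorbed by the gap between $C_N^{1/N}$ and $[\lambda(1-\lambda)]^{-\delta}$ once $N$ is chosen large; but you should say so explicitly. Second, for the upper bound $\dim_H E_K\le t^*$ you cannot simply quote ``standard Bowen-type formulas'', since $E_K$ fails big images. Here the shortcut is that $\log|F'_\lambda|$ is \emph{constant} on $E_K$, so $\dim_H E_K = h_{\text{top}}(F_\lambda|_{E_K})/\log[\lambda(1-\lambda)]^{-1}$ follows from the elementary covering argument for self-similar sets with a single contraction ratio; your Catalan computation then gives $h_{\text{top}}=\log 4$.
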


Our computations for Theorems~\ref{thm:main diss/cons} and \ref{thm:main pressure formula} use an infinite matrix $A^t$ which models our system as an Markov chain.  There is a corresponding infinite matrix $B^t$ which fits into the transfer operator approach.  For $K\in \N$, we let $A_K^t$ and $B_K^t$ denote the corresponding truncated $K\times K$ matrices and for any matrix $D$ we let $\sigma(D)$ denote the spectral radius of $D$.
In addition we will discuss topological pressure $\ptop$ (based on $(n,\eps)$-separated sets as introduced by Bowen, \cite{Bo_balls}
and then used to define topological pressure in
 \cite{Ru_press} and \cite{Walt_press})
and Gurevich pressure $P_G$, which is particularly
adapted to symbolic countable Markov chains.
The next result brings together these various notions of pressure.  It can be seen as a corollary of Theorems~\ref{thm:main diss/cons} and \ref{thm:main pressure formula}.

\begin{corollary}\label{cor:main all press}
For each $\lambda\in (0,1)$ and $t \in \R$,
\begin{equation}\label{eq:press eq}
\begin{array}{l}
P(\Phi_t) = P_G(\Phi_t)= P_{{\rm top}}(\Phi_t)=\Pconf (\Phi_t) = \log \sigma (B^t) \\[3mm]
\qquad \quad  = \lim_{K\to \infty}\log \sigma (B_K^t) =
\lim_{K\to \infty}\log \sigma (A_K^t).
\end{array}
\end{equation}
If $t = 0$, then the above quantities are all equal to
the topological entropy $\log 4$.
\end{corollary}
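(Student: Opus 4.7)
The plan is to use Theorems~\ref{thm:main diss/cons} and \ref{thm:main pressure formula} as a black box: both deliver the \emph{same} closed-form expression for $\Pconf(\Phi_t)$ and $P(\Phi_t)$, so $P(\Phi_t)=\Pconf(\Phi_t)$ is automatic, and only the identifications with $\log\sigma(B^t)$, with the two truncated-matrix limits, with $P_G(\Phi_t)$ and with $P_{\rm top}(\Phi_t)$ remain to be justified. The value at $t=0$ falls in the regime $\lambda^t\ge 1/2$ and the explicit formula returns $\log[4\cdot 1\cdot 1]=\log 4$, so $\htop(F_\lambda)=\log 4$ follows as soon as the main chain of equalities is in place.

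I would first handle the spectral-radius terms. The matrices $A^t$ and $B^t$ are respectively the weighted transition matrix and the transfer-operator matrix associated to the Markov partition $\{W_n\}$; their nonzero entries are read off directly from $|F_\lambda'|^{-t}$, giving $(1-\lambda)^t$ from $W_1$ and $[\lambda(1-\lambda)]^t$ from $W_n$ for $n\ge2$. Each truncation $B_K^t$ is a finite nonnegative irreducible matrix, so Perron--Frobenius yields a positive leading eigenvalue; the corresponding left-eigenvector equation can be solved by a finite recursion essentially identical to the one used to produce $\Pconf(\Phi_t)$ in Theorem~\ref{thm:main diss/cons}. This makes $\log\sigma(B_K^t)$ nondecreasing in $K$, with limit equal to $\log\sigma(B^t)$ in the standard Vere--Jones sense, and a direct computation matches the limit with $\log\psi(t)$ when $\lambda^t\le 1/2$ and with $\log[4\lambda^t(1-\lambda)^t]$ when $\lambda^t\ge 1/2$. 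A direct check from the definitions of the induced Markov chain and of the transfer operator shows $A^t=(B^t)^T$, so $\sigma(A_K^t)=\sigma(B_K^t)$ at every truncation level and the two truncated-matrix limits agree.

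The identification $P_G(\Phi_t)=\lim_K\log\sigma(B_K^t)$ is then Sarig's standard characterization of Gurevich pressure on a topologically mixing countable Markov shift with locally H\"older potential~\cite{Sarnull}, which is available here because $\Phi_t$ is piecewise constant on $\{W_n\}$. For $P_{\rm top}(\Phi_t)=P(\Phi_t)$, I would exhaust $(0,1]$ by the compact $F_\lambda$-invariant subsystems $X_K=\bigcap_{n\ge 0}F_\lambda^{-n}\bigl(\bigcup_{j\le K}W_j\bigr)$, apply Walters' classical variational principle on each $X_K$, and pass to the monotone limit. The main obstacle is this last step: one has to verify that neither side of the variational principle is lost at infinity. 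On the measure-theoretic side, the integrability condition $-\int\Phi_t\,d\mu<\infty$ built into \eqref{eq:pressure} prevents invariant measures charging the escaping set from producing unmatched pressure; on the topological side one must show that $(n,\eps)$-separated subsets of $(0,1]$ can, up to arbitrarily small loss in exponential growth, be relocated inside some $X_K$, and this is where the explicit structure of $F_\lambda$---in particular that the dissipative tail branches contribute no exponential complexity beyond what is already seen for large but finite $K$---has to be invoked.
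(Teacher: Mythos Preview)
Your overall strategy is close to the paper's, and most of your identifications are fine: matching $P(\Phi_t)$ and $\Pconf(\Phi_t)$ directly via the explicit formulas of Theorems~\ref{thm:main diss/cons} and~\ref{thm:main pressure formula} is perfectly legitimate (the paper routes this through Sarig's RPF theorem instead), your use of Sarig's approximation property for $P_G=\lim_K\log\sigma(B_K^t)$ is what the paper does via Proposition~\ref{prop:approx}, and your sketch for $\ptop=P$ by exhausting through the compact subsystems $X_K$ is essentially the paper's Proposition~\ref{prop:class eq gur}, where the ``relocation'' inequality you anticipate is made precise as $\Gamma_{n,\eps}\le n\,\Gamma_{n,\eps}|_{K_\eps}$.

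There is, however, one concrete error. Your claim that $A^t=(B^t)^T$ is false. Compare the first columns: $(A^t)^T$ has first column $(1-\lambda)^t(1,\lambda^t,\lambda^{2t},\lambda^{3t},\ldots)^T$, whereas $B^t$ has first column $(1-\lambda)^t(1,\lambda^t,0,0,\ldots)^T$. The two matrices are genuinely different operators and are \emph{not} related by transposition. What is true---and what the paper proves in Lemma~\ref{lem:charA=charB}---is that $A_K^t$ and $B_K^t$ are \emph{similar}: conjugating by the diagonal matrix $\mathrm{diag}(1,\lambda^t,\lambda^{2t},\ldots,\lambda^{(K-1)t})$ carries one to the other, so their characteristic polynomials (and hence spectral radii) coincide for every $K$. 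You should replace the transpose claim with this conjugation argument; once you do, the equality $\lim_K\log\sigma(A_K^t)=\lim_K\log\sigma(B_K^t)$ follows and the rest of your chain goes through.
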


One can compare this result to \cite[Proposition 1.2]{PrzR-LSm} for rational maps of the complex plane; specifically the equality between $\Pconf$ and $P$. 

The structure of this paper is as follows.
First, in Section~\ref{sec:acip}, we will prove that $F_\lambda$ has an acip,
\ie an $F_\lambda$-invariant probability measure absolutely continuous w.r.t.\ Lebesgue if and only if $\lambda \in (0, \frac12)$.
We take a probabilistic approach and introduce a random walk on a
Markov chain perspective for these maps.
Continuing the probabilistic approach, in
Section~\ref{sec:Conformal} we introduce the more general
$(t,p)$-conformal measure as a reference measure for $((0,1], F_\lambda)$,
and investigate its thermodynamic properties including what we call
conformal pressure.
For the variational approach to pressure, we need the topological pressure
on $F_\lambda$-invariant compact subsets of $(0,1]$,
and to this end we use infinite matrices matrices $B^t$
and their $K \times K$ cropped versions $B^t_K$ and compute
their leading eigenvalues in Section~\ref{sec:second app}.
This gives us also tools to compute the dimensions
of hyperbolic and escaping sets (Theorem~\ref{thm:main hyp dim})
in Section~\ref{sec:esc hyp dim}.
These various notions of pressure are discussed at length in
 Section~\ref{sec:pressure}, culminating in the proof
of Theorem~\ref{thm:main pressure formula}.
Finally, in Section~\ref{sec:null rec}, we show the null recurrence
of $t$-conformal measure $m_t$ in the case that $\lambda^t = \frac12$.

\section{Acips for $F_\lambda$}
\label{sec:acip}

Now we will calculate the values of $\lambda$ for which there is an $F_\lambda$-invariant probability measure absolutely continuous w.r.t. Lebesgue (acip).

\begin{theorem}
The system $((0,1], F_\lambda)$ has an acip $\mu$ if and only if
$\lambda \in (0,\frac12)$ and in this case
$$
\mu(W_i)= \frac{1-2\lambda}{\lambda} \left( \frac{\lambda}{1-\lambda} \right)^i.$$
If $\lambda \in (\frac12, 1)$, Lebesgue measure is dissipative.
\label{thm:acip}
\end{theorem}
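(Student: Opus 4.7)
My approach is the Markov-chain perspective advertised at the end of Section~\ref{sec:intro}. Since each branch $F_\lambda|_{W_n}$ is affine with image $F_\lambda(W_n)=(0,\lambda^{\max(n-2,0)}]$, the Lebesgue measure of a cylinder $\{x:F_\lambda^j(x)\in W_{n_j},\ 0\le j\le k\}$ factors as $|W_{n_0}|\prod_{j=0}^{k-1}p_{n_jn_{j+1}}$ with
$$
p_{nk}=\frac{|W_k|}{|F_\lambda(W_n)|}=\lambda^{k-n+1}(1-\lambda)
$$
for $n\ge 2$, $k\ge n-1$ (and $p_{1k}=\lambda^{k-1}(1-\lambda)$ for $k\ge 1$). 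Thus the symbol sequence $s_k(x)\in\N$ defined by $F_\lambda^k(x)\in W_{s_k(x)}$ evolves, under Lebesgue, as a reflected random walk on $\N$ whose increments from any $n\ge 2$ take value $j\ge -1$ with probability $\lambda^{j+1}(1-\lambda)$ and have mean drift $(2\lambda-1)/(1-\lambda)$. This Markov-chain picture drives all three cases.

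For $\lambda<\tfrac12$ (negative drift), I would seek a stationary distribution of the form $\mu_n=c\rho^n$ with $\rho=\lambda/(1-\lambda)<1$. The boundary equation $\lambda\mu_1=(1-\lambda)\mu_2$ (which follows from $p_{1k}=p_{2k}$ and the fact that only $W_1$ and $W_2$ have $W_1$ in their image) is consistent with the ansatz; for $k\ge 2$, invariance $\mu_k=\sum_n\mu_np_{nk}$ reduces to a geometric-series identity that one verifies directly. Normalising $\sum_n\mu_n=1$ forces $c=(1-2\lambda)/\lambda$, giving the stated formula. The resulting measure $\mu$ has piecewise constant Lebesgue density $\mu_n/|W_n|$ on each $W_n$, and hence is the desired acip.

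For $\lambda>\tfrac12$ the chain has positive drift. Applying the strong law of large numbers to the i.i.d.\ increments (noting that in the positive-drift regime the reflecting state $1$ is visited only finitely often, so it does not contaminate the LLN), $s_k(x)\to\infty$ for Lebesgue-a.e.\ $x$, equivalently $F_\lambda^k(x)\to 0$ for Lebesgue-a.e.\ $x$. This is precisely the statement that Lebesgue is dissipative. Any candidate acip $\mu\ll\Leb$ would inherit this orbit behaviour $\mu$-a.e., contradicting Poincar\'e recurrence of $\mu$ on any $W_j$ of positive $\mu$-measure.

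The main obstacle is the borderline case $\lambda=\tfrac12$: the drift vanishes and Lebesgue is not dissipative, so the argument above fails. I would instead invoke null recurrence: a reflected mean-zero random walk on $\N$ with finite-variance increments is null recurrent, so that $\tfrac1n\sum_{k=0}^{n-1}\1_{W_j}(F_\lambda^k x)\to 0$ for every $j$ and Lebesgue-a.e.\ $x$. If an acip $\mu$ existed, pick $j$ with $\mu(W_j)>0$; absolute continuity transfers the vanishing to $\mu$-a.e., contradicting Birkhoff's ergodic theorem whose limit integrates to $\mu(W_j)>0$.
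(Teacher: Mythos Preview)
Your approach is essentially the paper's: both model $F_\lambda$ as a Markov chain on $\N$ with the same transition probabilities and argue via the drift of the associated random walk. The differences are in execution. For the acip when $\lambda<\tfrac12$, the paper writes the stationarity equations, posits $v_i=\beta_i\rho^i$ with subexponential $\beta_i$, derives the second-order recurrence $(1-\lambda)\lambda\omega^2\beta_{N+1}-\omega\beta_N+\beta_{N-1}=0$ and solves it completely; this simultaneously produces the formula and shows that no other summable left eigenvector exists. Your direct geometric ansatz $\mu_n=c\rho^n$ is quicker, but you should add a line (irreducibility of the chain, hence uniqueness of the stationary probability vector) to justify that the formula in the theorem is \emph{the} acip and not merely \emph{an} acip. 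For $\lambda>\tfrac12$ the paper uses a CLT estimate on the centred increments $Y_i$ to conclude $\chi_n\to\infty$; your SLLN route is if anything cleaner, but the parenthetical ``the reflecting state $1$ is visited only finitely often, so it does not contaminate the LLN'' is circular as written---finiteness of visits to $1$ is part of what you are proving. The fix is a one-line coupling: since $p_{1,\cdot}$ is the law of $\xi+1$ where $p_{n,\cdot}$ ($n\ge2$) is the law of $\xi$, the reflected walk stochastically dominates the unreflected walk $S_k=s_0+\xi_1+\cdots+\xi_k$, and $S_k\to+\infty$ a.s.\ by the SLLN on genuinely i.i.d.\ increments.

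Finally, you treat the borderline $\lambda=\tfrac12$ via null recurrence and Birkhoff, whereas the paper defers this case to Section~\ref{sec:null rec}; in that respect your proposal is actually more self-contained than the paper's proof of this theorem.
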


If $\lambda = \frac12$, then Lebesgue measure is conservative, but there is no acip, so the system $((0,1], F_\lambda, \Phi_1)$ is null recurrent. We will return to this case in Section~\ref{sec:null rec}.
The proof of dissipativity is based on a random walk argument, similar
to \cite{BKNS}. Further work in this direction in non-linear setting can 
be found in \cite{MorSma, SchStr}, the latter inspired by
questions in parabolic Kleinian groups.

\begin{proof}
For $F_{\lambda}$ (considered as a Markov process), let $(A_{i,j})_{i,j}$ be the transition matrix corresponding to $F_{\lambda}$,
and let $(v_i)_i$ be the invariant probability vector, \ie
left eigenvector with eigenvalue $1$. As $F_{\lambda}$ is a Markov map, and $F_{\lambda}$ is
linear on each state $W_k$, we obtain $\mu(W_k) = v_k$.
We have
\begin{equation}\label{eq:A}
(A_{i,j})_{i,j} = (1-\lambda)
\left( \begin{array}{ccccccc}
1      & \lambda & \lambda^2 & \lambda^3 & \lambda^4 & \hdots   & \hdots \\
1      & \lambda & \lambda^2 & \lambda^3 & \lambda^4 & \hdots   & \hdots \\
0      & 1      & \lambda   & \lambda^2 & \lambda^3 & \lambda^4 & \hdots \\
0      & 0      & 1        & \lambda   & \lambda^2 & \lambda^3 & \hdots \\
\vdots & \vdots & 0        & 1        & \lambda   & \lambda^2 & \hdots \\
\vdots & \vdots & \vdots   & \vdots   & \vdots   & \vdots   & \ddots
\end{array} \right)
\end{equation}
Suppose $v_i = \beta_i \rho^i$, where $\frac1i \log \beta_i \to 0$ as $i
\to \infty$, so any exponential growth/decline of $v^i$
is captured in $\rho^i$. Then
$$v_N = \sum_{i=1}^{N+1} v_ip_{i,N} =
(1-\lambda) \left( \sum_{i=2}^{N+1} \rho^i\beta_i\lambda^{N+1-i} + \rho
\beta_1 \lambda^{N-1} \right).$$
Dividing by $\lambda^N$ we obtain
\begin{equation}
\beta_N \left(\frac{\rho}{\lambda}\right)^N  = (1-\lambda)
\left( \lambda \sum_{i=2}^{N+1} \beta_i \left(\frac{\rho}{\lambda} \right)^i +
\beta_1 \frac{\rho}{\lambda} \right). \label{12.13}
\end{equation}
Write $\omega = \frac{\rho}{\lambda}$, then subtracting \eqref{12.13}
for $N-1$ from \eqref{12.13} for $N$, and then dividing by $\omega^{N-1}$
gives
$$
(1-\lambda) \lambda \omega^2 \beta_{N+1} - \omega \beta_N + \beta_{N-1} = 0.
$$
Solving the recurrence equation shows that
$\beta_N = \beta_1(\alpha b_+^{N-1} + (1-\alpha) b_-^{N-1})$ for
$\alpha \in \R$ arbitrary, and
$$
b_\pm = \frac{1 \pm \sqrt{1-4\lambda(1-\lambda)} }{(1-\lambda)\lambda \omega}
= \frac{1\pm |1-2\lambda|}{(1-\lambda)\lambda \omega},
\quad \text{ so }
\left\{ \begin{array}{l}
b_+ = \frac{1}{\lambda \omega} = \frac{1}{\rho} \\
b_- = \frac{1}{(1-\lambda) \omega} = \frac{\lambda}{ (1-\lambda)\rho } \\
\end{array} \right.
$$
Therefore $\beta_i$ grows or decreases exponentially
unless $\rho = 1$ or $\rho = \frac{\lambda}{1-\lambda}$.
The former gives $v_i \equiv \beta_1$, which
does not give a probability vector and, moreover,  is only a solution if
$\lambda = \frac12$. The only viable solution is therefore
$\rho = \frac{\lambda}{1-\lambda}$, and direct inspection shows that
taking $\alpha = 1$ and $\beta_i \equiv \beta_1$ indeed solves \eqref{12.13}.
We normalise $\beta_i \equiv (1-\rho)/\rho = (1-2\lambda)/\lambda$ to obtain the normalised solution
\begin{equation}\label{eq:v}
v_i = \frac{1-2\lambda}{\lambda} \left( \frac{\lambda}{1-\lambda} \right)^i
\text{ for } \lambda < \frac12.
\end{equation}
Let us now show that Lebesgue measure is dissipative if $\lambda > \frac12$.
To this end, we consider the action of $F_\lambda$ as a random walk on
the state space $\N$, and let $\chi_n(x) = j$ if $F_{\lambda}^n(x) \in \W_j$.
The probability of going from state $i$ to $j$ is the $i,j$-th entry of $A$,
and we are in particular interested in the conditional expectation
(also called {\em drift})
\begin{equation}\label{eq:drift}
\begin{split} \drift(\lambda) &:= \E(\chi_n-k \mid \chi_{n-1} = k) =
-(1-\lambda) + \lambda(1-\lambda)\sum_{j \ge 1} j\lambda^j \\
&= -(1-\lambda)+ \frac{\lambda^2}{(1-\lambda)} = \frac{2\lambda-1}{1-\lambda}.
\end{split}
\end{equation}
Hence $\E(\chi_n-k \mid \chi_{n-1} = k) > 0$ if $\lambda > \frac12$.
Define $Y_i = (\chi_i-\chi_{i-1}) - \E(\chi_i-\chi_{i-1})$,
then $\E(Y_i) = 0$ and the second moment
$$
\sigma^2 := \E(Y^2_n) =
(1-\lambda)+\lambda(1-\lambda)\sum_{i \ge 1} i^2 \lambda^i
$$
is bounded and independent of $n$.
Thus the Central Limit Theorem applies, so
$\frac{1}{\sigma \sqrt n}\sum_{i=1}^n Y_i$ converges in distribution
to a normally distributed random variable $\mathcal Y$.
Also $\E(\chi_i-\chi_{i-1}) =
\sum_k \E(\chi_i- k | \chi_{i-1} = k) {\mathbb P}(\chi_{i-1} = k)
= \drift(\lambda)$.
Therefore
$$
\chi_n = \chi_0 + \sum_{i=1}^n Y_i + \sum_{i=1}^n \E(\chi_i - \chi_{i-1})
\ge
 \chi_0 + \sigma{\sqrt{n}}\, {\mathcal Y} + n \drift(\lambda)
\to \infty \quad \text{a.s.}
$$
provided $\drift(\lambda) > 0$.
This means that for $\lambda > \frac12$, Lebesgue typical starting points will
have $\chi_n(x) \to \infty$, and $F_\lambda^n(x) \to 0$ as $n \to \infty$.
\end{proof}

\section{Conformal measures and conformal pressure for $F_\lambda$}
\label{sec:Conformal}

In this section we define and compute conformal pressure and combine
it with the drift argument of the previous section to determine whether
or not $(t,p)$-conformal measures are conservative.
Throughout, maps and potentials are assumed to be Borel measurable.

\subsection{Definition of conformal measure}

\begin{definition}\label{def:conformal}
Given a dynamical system $(X,f)$ with potential $\phi:X \to \R$,
a measure $m$ is called \emph{$\phi$-conformal} if
$m(f(A)) = \int_A e^{-\phi} dm$ whenever $f:A \to f(A)$ is one-to-one on a measurable set $A$.
\end{definition}

Notice that if we perform a potential shift by a constant $p$ (\ie
$m(f(A)) = \int_A e^{p-\phi} dm$ whenever $f:A \to f(A)$ is one-to-one),
this will result in a $(\phi-p)$-conformal measure.  Conformal measures corresponding to such shifted potentials are used to define  $\Pconf(\phi)$ defined by \eqref{eq:Pconf}.  Since the canonical class of potentials for our system is $\{-t\log|F'|:t\in \R\}$ (sometimes called the `geometric potentials'), we will be interested in $(-t\log|F'|-p)$-conformal measures for some $p\in \R$.
As mentioned in Section~\ref{sec:intro}, for brevity we will call such a measure a $(t,p)$-conformal
measure and denote it by $m_{t,p}$.

A measure $\mu$ on $X$ is called \emph{non-singular} if $\mu(A)=0$ if and only if $\mu(f^{-1}A)=0.$   A set $ W \subset X$ is called $\emph{wandering}$ if  the sets $\{f^{-n}W\}_{n=0}^{\infty}$ are disjoint.

\begin{definition} \label{def:con} Let $f: X \to X$ be a dynamical system. An $f$-non-singular measure $\mu$ is called \emph{conservative} if every wandering set $W$  is such that
$\mu(W)=0$.
\end{definition}
A conservative measure satisfies the Poincar\'e Recurrence Theorem (see \cite[p.17]{Aaro_book}, or \cite[p.30]{sanotes}).

\subsection{Computation of the conformal pressure for $F_\lambda$}

The log of the function defined in the next lemma will turn out to be the conformal pressure.

\begin{lemma}\label{lem:psi}
Given $\lambda \in [0,1)$, the map
$$
t \mapsto \psi(t) := \frac{(1-\lambda)^t}{1-\lambda^t}
$$
is analytic and strictly decreasing and strictly convex on $(0, \infty)$,
$\lim_{t \to 0} \psi(t) = \infty$,
$\psi(1) = 1$,  $\psi''>0$ and $\lim_{t \to \infty} \psi(t) = 0$.
\end{lemma}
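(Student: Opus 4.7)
The key observation is that for $t>0$ we have $\lambda^t\in(0,1)$, so a geometric expansion turns $\psi$ into a Dirichlet-type series
$$
\psi(t) \;=\; (1-\lambda)^t \sum_{n=0}^{\infty} \lambda^{nt} \;=\; \sum_{n=0}^{\infty} c_n^{\,t}, \qquad c_n := (1-\lambda)\lambda^n \in (0,1).
$$
I would use this expansion as the workhorse. For each $n$, the function $t\mapsto c_n^{\,t} = e^{t\log c_n}$ is real-analytic on $(0,\infty)$, strictly positive, strictly decreasing (because $\log c_n<0$), and strictly convex (its second derivative is $c_n^{\,t}(\log c_n)^2>0$). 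The plan is then to pass these properties to the sum.

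For this I would verify that the termwise series, and the series obtained by differentiating term by term any number of times, converge uniformly on every compact $[\alpha,\beta]\subset(0,\infty)$. This is a routine Weierstrass $M$-test: the $k$-th termwise derivative is $c_n^{\,t}(\log c_n)^k$, and since $|\log c_n|=-\log(1-\lambda)-n\log\lambda$ grows only linearly in $n$ while $c_n^{\,\alpha}\le (1-\lambda)^{\alpha}\lambda^{n\alpha}$ decays geometrically in $n$, each series is dominated by a convergent constant series independent of $t\in[\alpha,\beta]$. This lets me conclude $\psi\in C^\infty(0,\infty)$ with $\psi^{(k)}(t)=\sum_n c_n^{\,t}(\log c_n)^k$. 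Analyticity follows by the same argument extended to a complex strip around each point of $(0,\infty)$.

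From this representation the monotonicity and convexity statements are immediate: $\psi'(t)$ is a sum of strictly negative terms and $\psi''(t)$ is a sum of strictly positive terms. The endpoint information I would read off the closed form directly: $\psi(1)=(1-\lambda)/(1-\lambda)=1$; as $t\to 0^+$ the numerator tends to $1$ while $1-\lambda^t\to 0^+$, forcing $\psi(t)\to\infty$; and as $t\to\infty$ the numerator $(1-\lambda)^t$ goes to $0$ while the denominator tends to $1$, so $\psi(t)\to 0$.

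I do not anticipate a substantive obstacle; the only fiddly step is the $M$-test bookkeeping for the higher-derivative series, which is elementary. Were one to prefer a direct route, one could instead differentiate the closed form $\log\psi(t)=t\log(1-\lambda)-\log(1-\lambda^t)$ twice and rearrange, but the series representation gives strict convexity term by term and so seems the cleanest path.
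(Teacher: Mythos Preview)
Your plan is correct and complete. The paper takes the route you mention at the end as an alternative: it differentiates $\log\psi(t)=t\log(1-\lambda)-\log(1-\lambda^t)$ directly and records the explicit formulas
\[
\psi'(t)=\psi(t)\Big[\log(1-\lambda)+\tfrac{\lambda^t}{1-\lambda^t}\log\lambda\Big],
\qquad
\psi''(t)=\psi(t)\Big[(\cdot)^2+\tfrac{\lambda^t}{(1-\lambda^t)^2}\log^2\lambda\Big],
\]
from which the signs are read off. Your geometric expansion $\psi(t)=\sum_{n\ge 0}c_n^{\,t}$ with $c_n=(1-\lambda)\lambda^n\in(0,1)$ is a genuinely different and arguably cleaner argument for the qualitative claims: strict monotonicity and strict convexity drop out term by term without any algebraic manipulation, and analyticity is immediate either from the locally uniform convergence in a complex strip or, even more directly, because $\psi$ is a quotient of entire functions with nonvanishing denominator on $(0,\infty)$. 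What the paper's approach buys is the explicit closed forms for $\psi'$ and $\psi''$, which it later uses (in the proof of Theorem~\ref{thm:B ev}) to compute the one-sided second derivatives of $\log x(t)$ at the phase transition $t_0$; your series would give the same numbers after summing $\sum_n c_n^{\,t}(\log c_n)^2$, but the logarithmic-derivative formula makes that evaluation at $\lambda^{t_0}=\tfrac12$ a one-liner.
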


\begin{proof} Straight-forward calculus.
Note that the derivatives are
$$
\psi'(t) = \psi(t) \left[ \log(1-\lambda) + \frac{\lambda^t}{1-\lambda^t} \log \lambda \right] < 0
$$
and
$$
\psi''(t) = \psi(t) \left[ \left(\log(1- \lambda) + \frac{\lambda^t}{1-\lambda^t} \log \lambda\right)^2 +  \frac{\lambda^t}{(1-\lambda^t)^2} \log^2 \lambda \right] > 0
$$
for all $t \in (0, \infty)$.
\end{proof}

\begin{theorem}\label{thm:conformal}
Fix $\lambda\in (0,1)$.  Then for each $t > 0$, the smallest $p\in \R$
such that there is a $(t,p)$-conformal measure $m_{t,p}$
is
$$
p = \Pconf(\Phi_t) = \left\{ \begin{array}{ll}
\log \psi(t) & \text{ if } \lambda^t \le \frac12, \\[1mm]
\log 4[\lambda(1-\lambda]^t & \text{ if } \lambda^t \ge \frac12.
\end{array} \right.
$$
In this case, the conformal measure is given by
$$
m_{t,p}(W_k)= \left\{ \begin{array}{ll}
(1-\lambda^t)\lambda^{t(k-1)} & \text{ if } p = \log \psi(t) \text{ and } \lambda^t \le \frac12, \\[1mm]
\left[ (k-1) + \lambda^{-t}(1-\frac{k}{2}) \right] (\frac12)^k
& \text{ if } p = \log 4[\lambda(1-\lambda)]^t \text{ and } \lambda^t \ge \frac12.
\end{array} \right.
$$
\end{theorem}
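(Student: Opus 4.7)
The plan is to convert the conformal relation into a linear recursion for $a_n := m(W_n)$, solve it, and then determine for which $p$ the resulting sequence is non-negative. After rescaling, I may assume $\|m\| = 1$. Using that $F_\lambda$ has constant slope on each $W_n$, with $|F'| = (1-\lambda)^{-1}$ on $W_1$ and $|F'| = [\lambda(1-\lambda)]^{-1}$ on every $W_n$ for $n \ge 2$, and that $F_\lambda(W_1) = F_\lambda(W_2) = (0,1]$ while $F_\lambda(W_n) = (0,\lambda^{n-2}] = \bigcup_{k \ge n-1} W_k$ for $n \ge 3$, Definition~\ref{def:conformal} specializes to
\[
a_1 = e^{-p}(1-\lambda)^t, \qquad a_2 = e^{-p}[\lambda(1-\lambda)]^t, \qquad \sum_{k\ge n-1} a_k = q\,a_n \ \ (n \ge 2),
\]
where $q := e^p[\lambda(1-\lambda)]^{-t}$; in particular $a_2 = \lambda^t a_1$.

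Setting $T_m := \sum_{k \ge m} a_k$, the last family reads $T_m = q\,a_{m+1}$ for $m \ge 1$, and subtracting consecutive ones gives the three-term recurrence $a_m = q(a_{m+1} - a_{m+2})$ with characteristic polynomial $q\rho^2 - q\rho + 1 = 0$ and roots $\rho_\pm = \tfrac12(1 \pm \sqrt{1 - 4/q})$. When $q < 4$ the roots are complex conjugates of modulus $q^{-1/2}$, every real solution is of the form $q^{-n/2}\cos(n\theta + \varphi)$ with $\theta \in (0,\pi)$, and such a sequence has infinitely many negative values; so no conformal measure exists, which already gives $p \ge \log(4[\lambda(1-\lambda)]^t)$. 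For $q > 4$ the general solution $a_n = c_+ \rho_+^n + c_- \rho_-^n$, combined with the identities $\rho_+ + \rho_- = 1$, $\rho_+\rho_- = 1/q$ and the boundary values $a_1$, $a_2 = \lambda^t a_1$, forces
\[
c_+ = \frac{(\lambda^t - \rho_-)\,a_1}{\rho_+(\rho_+ - \rho_-)}, \qquad c_- = \frac{(\rho_+ - \lambda^t)\,a_1}{\rho_-(\rho_+ - \rho_-)}.
\]

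Since $(\rho_-/\rho_+)^n$ is strictly decreasing in $n$, the condition $a_n \ge 0$ for every $n$ reduces, given $a_1 > 0$, to $c_+ \ge 0$, which is equivalent to $\rho_- \le \lambda^t$. The map $p \mapsto \rho_-$ is strictly decreasing on $[\log(4[\lambda(1-\lambda)]^t), \infty)$, starting at $1/2$ and tending to $0$, and an elementary computation shows $\rho_- = \lambda^t$ precisely at $e^p = \psi(t)$. Hence when $\lambda^t \le 1/2$ the smallest admissible $p$ is $\log\psi(t)$, at which $c_- = 0$ gives the geometric measure $a_n = (1-\lambda^t)\lambda^{t(n-1)}$; when $\lambda^t \ge 1/2$ the inequality $\rho_- \le 1/2 \le \lambda^t$ already holds at the left endpoint $q = 4$, so the smallest admissible $p$ is $\log(4[\lambda(1-\lambda)]^t)$. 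In this degenerate case the characteristic polynomial has a double root $1/2$ and the general solution is $(c_1 + c_2 n)(1/2)^n$; matching the boundary values $a_1, a_2$ yields $c_1 = \lambda^{-t} - 1$ and $c_2 = 1 - \tfrac12\lambda^{-t}$, which simplifies to the theorem's formula $a_n = [(n-1) + \lambda^{-t}(1 - n/2)](1/2)^n$. A direct substitution confirms both closed-form measures satisfy the original conformal identities.

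The main obstacle I anticipate is the positivity bookkeeping in the mixed-sign regime (when $\lambda^t > 1/2$ and $p \in [\log(4[\lambda(1-\lambda)]^t), \log\psi(t))$, where $c_+ > 0$ but $c_- < 0$): one must show that the negative term $c_-\rho_-^n$ never overwhelms $c_+\rho_+^n$ at small $n$, which is the content of the decreasing-ratio argument combined with $a_1 > 0$. The degenerate double-root case at $q = 4$ also requires its own short argument but is then routine; everything else is linear algebra and elementary monotonicity.
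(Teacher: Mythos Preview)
Your proof is correct and follows essentially the same route as the paper: translate conformality into the relations \eqref{eq:nu_1}--\eqref{eq:nu_k}, derive the second-order recurrence, solve via its characteristic roots, and determine the range of $p$ for which the solution is non-negative. The one substantive difference is how you exclude $p < \log 4[\lambda(1-\lambda)]^t$: the paper invokes a periodic-point count (forward-referencing the proof of Theorem~\ref{thm:B ev}) to force a contradiction with the mass of $W_1$, whereas you observe directly that the characteristic roots become complex conjugates and the resulting oscillation forces $a_n < 0$ infinitely often --- this is cleaner and fully self-contained. One slip: at $p = \log\psi(t)$ with $\lambda^t \le \tfrac12$ it is $c_+$ that vanishes (since $\rho_- = \lambda^t$), not $c_-$; the surviving term $c_-\rho_-^n$ with $\rho_- = \lambda^t$ is what gives the geometric measure.
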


\begin{proof}
Suppose that $m_{t, p}$ is a $(t,p)$-conformal measure.
That is $L_{\Phi_t}^*m_{t, p}=e^pm_{t, p}$ for some $p\in \R$.
Then
\begin{align*}
m_{t, p}(W_k)& =\int \1_{W_k}~dm_{t, p}=\int L_{\Phi_t}e^{-p}\1_{W_k}~dm_{t, p} \\
&=\int \sum_{F_\lambda y=x}e^{\Phi_t(y)-p}\1_{W_k}(y)~dm_{t, p}(x)\\
& =\sum_{W_k\to W_j} e^{-p} |F'_\lambda|_{W_k}^{-t}m_{t, p}(W_j),
\label{eq:recur}
\end{align*}
where $W_k\to W_j$ denotes the fact that $F_\lambda$ maps $W_k$ to $W_j$.

Therefore,
\begin{equation}
m_{t, p}(W_1)= e^{-p} (1-\lambda)^{t}\sum_{j\ge 1}m_{t, p}(W_j) \quad \text{ for } k=1
\label{eq:nu_1}
\end{equation}
and
\begin{equation}
m_{t, p}(W_k)= e^{-p}\left[\lambda(1-\lambda)\right]^{t} \sum_{j\ge k-1} m_{t, p}(W_j)
\quad \text{ for } k\ge 2. \label{eq:nu_k}
\end{equation}

As we will see in the following claim, for some values of $(t,p)$, the conformal measure has a particularly simple form.  As we prove below, these are the relevant measures when $\lambda^t<1/2$ and $p=\Pconf(\Phi_t)$.

\begin{claim}\label{Claim1} If $\lambda^t<1$ then
\begin{equation}\label{eq:w}
m_{t, p}(W_k)=(1-\lambda^t)\lambda^{t(k-1)}
\end{equation}
and $p=\log\psi(t)$ solve \eqref{eq:nu_1} and \eqref{eq:nu_k}.
\end{claim}

\begin{proof}
If $m_{t, p}(W_j)=C\gamma^j$, then
 $$C\gamma=m_{t, p}(W_1)= Ce^{-p} (1-\lambda)^{t}\sum_{j\ge 1}\gamma^j=Ce^{-p} (1-\lambda)^{t}\left(\frac\gamma{1-\gamma}\right),$$
and hence $e^{-p}=\frac{1-\gamma}{(1-\lambda)^t}$.
Similarly,
$$
C\gamma^k=m_{t, p}(W_k)= Ce^{-p}\left[\lambda(1-\lambda)\right]^{t} \sum_{j\ge k-1} \gamma^j=Ce^{-p} \left[\lambda(1-\lambda)\right]^{t} \left(\frac{\gamma^{k-1}}{1-\gamma}\right),
$$
and hence $e^{-p}=\frac{\gamma(1-\gamma)}{\left[\lambda(1-\lambda)\right]^{t}}$.  Therefore, $\gamma=\lambda^t$ and $p=\log\psi(t)$.
Finally, taking $C = \frac{1-\lambda^t}{\lambda^t}$ normalises $m_{t, p}$ to
$\sum_k m_{t, p}(W_k) = 1$. This proves Claim~\ref{Claim1}.
\end{proof}

Next we will show that there is no $(t,p)$-conformal measure
if $p < \log 4[\lambda(1-\lambda)]^t$.
Suppose that there is a $(t,p)$-conformal measure $m_{t,p}$.
Let $0 < \eps := \log(4[\lambda(1-\lambda)]^t) - p$.
We will see in the proof of Theorem~\ref{thm:B ev} later on that the number of
periodic points in $W_1$ of period $n$ exceeds $4^{(1-\eps/2)n}$
for $n$ sufficiently large.
Let $X_n = W_1 \cap F^{-n}(W_1)$
and let $\{Y_{n,i}\}_i$ be the collection of connected components of $X_n$.  Since each $Y_{n,i}$ contains a periodic point of period $n$, there exists $C>0$ such that $\#\{Y_{n,i}\}_i\ge 4^{n(1-\eps/2)}$
for $n$ sufficiently large.
To compute the measure of $Y_{n,i}$, we can use
$$
m_{t,p}(W_1) =\int_{Y_{n,i}}e^{pn}\ (1-\lambda)^{-t} \
\left[\lambda(1-\lambda)\right]^{-t(n-1)}~dm_{t,p},
$$
so $m_{t,p}(Y_{n,i}) = C m_{t,p}(W_1)e^{-pn} \left[\lambda(1-\lambda)\right]^{tn}$
for some $C > 0$.
Using the cardinality of components $Y_{n,i}$,
we find
$$
 4^{n(1-\eps/2)} \cdot C  e^{-pn} \left[\lambda(1-\lambda)\right]^{tn} m_{t,p}(W_1)
\le m_{t,p}(W_1),
$$
but the condition $p < \log 4[\lambda(1-\lambda]^t$ makes this impossible
to satisfy for large $n$.

From now on assume that $p \ge \log 4[\lambda(1-\lambda]^t$.
Subtracting \eqref{eq:nu_k} for $k+1$ from
\eqref{eq:nu_k} for $k$ gives for $x_k := m_{t, p}(W_k)$:
$$
x_{k+1}-x_k+cx_{k-1}=0 \quad \text{ where } c:=e^{-p}[\lambda(1-\lambda)]^t.
$$
Thus the roots of the generating equation $r^2-r+c=0$
are $r_\pm=\frac{1\pm\sqrt{1-4c}}2$, and the general solution is
$x_k = A_+ r_+^k + A_- r_-^k$.
We normalise so that the total mass is $\sum_k x_k = 1$.
Then \eqref{eq:nu_1} and \eqref{eq:nu_k} for $k=2$ give
\begin{equation}\label{eq:x1x2}
\left\{ \begin{array}{rcccl}
x_1 &=& e^{-p} (1-\lambda)^t &=& c \lambda^{-t}, \\
x_2 &=& e^{-p} \lambda^t (1-\lambda)^t &=& c.
\end{array} \right.
\end{equation}
Substituting $x_k = A_+ r_+^k + A_- r_-^k$ in this equation and solving
for $A_\pm$ gives
$$
\left\{ \begin{array}{rcc}
c &=&  A_+ \lambda^t r_+ + A_- \lambda^t r_- \\[2mm]
c &=& A_+ r_+^2 + A_- r_-^2
\end{array} \right.
\quad \text{ and } \quad
\left\{ \begin{array}{rcl}
A_+ &=& \frac{c(\lambda^t-r_-)}{\lambda^t(r_+-2c)}, \\[2mm]
A_- &=& \frac{c(r_+-\lambda^t)}{\lambda^t(2c-r_-)}.
\end{array} \right.
$$
The form $r_\pm=\frac{1\pm\sqrt{1-4c}}2$ and $c = e^{-p}[\lambda(1-\lambda)]^t$ implies the inequalities
$$
0 < r_- < 2c = 2e^{-p} \lambda^t(1-\lambda)^t \le \frac12 < r_+ < 1
\quad \text{ and } \quad
\lambda^t < r_+,
$$
where the third inequality is strict if $p > \log 4[\lambda(1-\lambda)]^t$.
This shows that $A_- \ge 0$.

If $\lambda^t \le \frac12$, then we find $A_+ \ge 0$ when
$\lambda^t \ge r_-$,
which is precisely the case when $p \ge \psi(t)$.
It is under this condition that $x_k > 0$ for all $k \in \N$.
If $\lambda^t = r_-$ (so $p = \log \psi(t)$), then $A_+ = 0$, and
$A_- = \frac{1-\lambda^t}{\lambda^t}$.
Hence the solution is the one given in Claim~\ref{Claim1}.

If $\lambda^t > \frac12$, then clearly $\lambda^t > r_-$
and hence $A_+ > 0$ for all allowed values of $p$, and
$p = \log 4[\lambda(1-\lambda)]^t$ is smallest of these.
In this case $r_+ = r_- = \frac12$, and this double root
leads to a solution $x_k = A (\frac12)^k + Bk (\frac12)^k$.
Substitution in \eqref{eq:x1x2} gives
$A = 1-\frac{1}{\lambda^t}$ and $B = \frac{1}{2\lambda^t} - 1$.
\end{proof}

\begin{proof}[Proof of Theorem~\ref{thm:main diss/cons}]
The value of the smallest $p\in \R$ for which there is a $(t,p)$-conformal
measure follows from Theorem~\ref{thm:conformal}.
Now that we have established the existence of a $(t,\Pconf(\Phi_t))$-conformal
measure $m_{t}=m_{t,\Pconf(\Phi_t)}$, for $\lambda^t<1/2$ we can extend our transition probability matrix $A$
from \eqref{eq:A} in
Section~\ref{sec:acip} to a transition probability matrix with respect
to $m_t$.
Indeed, measured in $m_t$, the probability to move
from state $W_i$ to $W_j$ is (using the definition of $(t,\log \psi(t))$-conformal measure)
\begin{align}\label{eq:conformal_transitions}
\frac{m_t(W_i\cap F_\lambda^{-1}(W_j)) }{ m_t(W_i) } & =
\frac{|F'|_{W_i}|^{-t}\psi(t)m_t(W_j)}{|F'|_{W_i}|^{-t}\psi(t)\sum_{k\ge i-1}m_t(W_k)} \\
&= \frac{ (1-\lambda^t) \lambda^{t(j-1)} }
{ \sum_{k \ge i-1} (1-\lambda^t) \lambda^{t(k-1)} } \nonumber\\
&= (1-\lambda^t) \lambda^{t(j-i+1)} \nonumber
\end{align}
provided $j \ge i-1$.
Therefore, if
\begin{equation}\label{eq:At}
A^t = (1-\lambda)^t
\left( \begin{array}{ccccccc}
1^t    & \lambda^t & \lambda^{2t} & \lambda^{3t} & \hdots & \hdots   & \hdots \\
1^t    & \lambda^t & \lambda^{2t} & \lambda^{3t} &        &          &   \\
0      & 1^t       & \lambda^t   & \lambda^{2t} & \lambda^{3t}   &   &   \\
0      & 0         & 1^t         & \lambda^t   & \lambda^{2t} &     &  \\
\vdots &           & 0           & 1^t         & \lambda^t   & \lambda^{2t} & \dots \\
       &           &             &             & \ddots      & \ddots    & \ddots
\end{array} \right).
\end{equation}
is the matrix $A$ in \eqref{eq:A} with all entries raised to the power $t$,
then  $\psi^{-1}(t) A^t$ is a probability matrix and
$\frac{m_t(W_i\cap F_\lambda^{-1}(W_j)) }{ m_t(W_i) } =
\psi^{-1}(t) A^t_{i,j}$.
Now we are able to use the drift argument in the proof of
Theorem~\ref{thm:acip}
to conclude that the measure $m_t$ is conservative if 
$\lambda^t<1/2$.

Now to prove that any $(t,p)$-conformal measure is dissipative whenever
$\lambda^t > 1/2$ or $p > \Pconf(\Phi_t)$ (we leave the null recurrent
case $\lambda^t = \frac12$ and $p = \psi(t)$ to
Section~\ref{sec:null rec}),
we use the same drift argument for
$m_{t, p}(W_j) = A_+ r_+^j + A_- r_-^j$ as in the proof of
Theorem~\ref{thm:conformal}.
(Note that $r_\pm$ and hence $A_\pm$ depends on $p$ via
$c = e^{-p}[\lambda(1-\lambda)]^t$. Note also that $A_+ > 0$
for $p > \Pconf(\Phi_t)$.)
Inserting this solution in \eqref{eq:conformal_transitions},
we find that the transition probability for $W_i \to W_j$ is
\begin{align*}
\frac{m_{t, p}(W_i\cap F_\lambda^{-1}(W_j)) }{ m_{t, p}(W_i) } & =
\frac{|F'|_{W_i}|^{-t} e^p m_{t, p}(W_j)}{|F'|_{W_i}|^{-t} e^p\sum_{k\ge i-1}m_{t, p}(W_k)} \\
&= \frac{ A_+ r_+^j +  A_- r_-^j }
{ \sum_{k \ge i-1}  A_+ r_+^j +  A_- r_-^j } \\
&= C_i \left( r_+^{j-i+1} + \alpha r_-^j r_+^{-i+1} \right)
\end{align*}
where $\alpha = \frac{A_-}{A_+} =
\frac{(r_+-\lambda^t)(r_+-2c)}{(\lambda^t-r_+)(2c-r_-)}$,
and
$C_i = (1-r_+) \left( 1 + \alpha \frac{-r_+}{1-r_-} (\frac{r_-}{r_+})^i \right)^{-1} \to 1-r_+$ as $i \to \infty$.
Since $r_- < \frac12 < r_+$, there is $i_0$ such that the drift
\begin{align*}
 C_i \sum_{j \ge i-1} (j-i-1) \left( r_+^{j-i+1} + \alpha r_-^j r_+^{-i+1} \right)
&= C_i \left( \frac{2r_+-1}{(1-r_+)^2} + \alpha  \left(\frac{r_-}{r_+}\right)^{i-1}
\frac{2r_--1}{(1-r_-)^2} \right)
\end{align*}
is positive for all $i \ge i_0$.
This means that whenever an orbit reaches
a state $i \ge i_0$, the probability of wandering
off to infinity afterwards is positive.
Since $\chi_n(x) \ge i_0$ infinitely often $m_{t, p}$-a.e., it follows that
$m_{t, p}$-typical orbits converge to $0$, proving that $m_{t, p}$ is dissipative.
\end{proof}

Next we discuss the $F_\lambda$-invariant measures that are absolutely
continuous w.r.t.\  the $(t,\Pconf(\Phi_t))$-conformal measure $m_t$

\begin{proposition}\label{prop:mut}
If $\lambda^t\in (0, 1/2)$ and $p = \psi(t)$,
then there is an $F_\lambda$-invariant probability
measure $\mu_t \ll m_t$ such that
\begin{equation}\label{eq:vt}
\mu_t(W_j) = v_i^t := \frac{1-2\lambda^t}{\lambda^t} \left(\frac{\lambda^t}{1-\lambda^t}\right)^i.
\end{equation}
\end{proposition}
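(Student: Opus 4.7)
My plan is to convert the invariant probability vector analysis of Theorem~\ref{thm:acip} into a statement about the Markov chain already set up for the measure $m_t$. In the proof of Theorem~\ref{thm:main diss/cons}, we computed in \eqref{eq:conformal_transitions} the one-step transition probabilities of $F_\lambda$ with respect to $m_t$:
\[
P_{i,j} := \frac{m_t(W_i \cap F_\lambda^{-1}(W_j))}{m_t(W_i)} = (1-\lambda^t)\lambda^{t(j-i+1)} \quad \text{for } j \ge i-1.
\]
The key observation is that the matrix $P$ has exactly the same block structure as the Lebesgue-transition matrix $A$ in \eqref{eq:A}, only with every occurrence of $\lambda$ replaced by $\lambda^t$. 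In other words, once we pass to the conformal reference $m_t$, the resulting chain is isomorphic as a Markov chain to the Lebesgue chain with parameter $\lambda^t$.

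Consequently, I would rerun the invariant-vector calculation from Theorem~\ref{thm:acip} with $\lambda^t$ in place of $\lambda$. The ansatz $v_i^t = \beta_i \rho^i$ leads to the same two-step recurrence, whose two candidate roots are $\rho = 1$ and $\rho = \lambda^t/(1-\lambda^t)$. The first fails to yield a probability vector (and is only a solution at the boundary $\lambda^t = 1/2$), so the only admissible choice is $\rho = \lambda^t/(1-\lambda^t)$ with constant $\beta_i$, and summability of $\sum_i \rho^i$ requires precisely $\lambda^t < 1/2$, which is our hypothesis. Normalising so that $\sum_i v_i^t = 1$ delivers the explicit formula in \eqref{eq:vt}.

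To upgrade the stationary vector $(v_i^t)$ to an $F_\lambda$-invariant measure on $(0,1]$, I would define $\mu_t$ by prescribing its density with respect to $m_t$ to be the constant $v_i^t/m_t(W_i)$ on each branch $W_i$. Because $F_\lambda$ is linear on $W_i$ and $m_t|_{W_i}$ is proportional to Lebesgue — an immediate consequence of $(t,p)$-conformality together with $|F'_\lambda|$ being constant on $W_i$ — this branchwise-constant density interacts cleanly with the dynamics: the identity $\mu_t \circ F_\lambda^{-1} = \mu_t$ reduces to the eigenvector relation $\sum_i v_i^t P_{i,j} = v_j^t$, using that $F_\lambda$ is a bijection from $W_i \cap F_\lambda^{-1}(W_j)$ onto $W_j$ with constant Jacobian.

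Absolute continuity $\mu_t \ll m_t$ is built into the construction, with Radon--Nikodym derivative $v_i^t/m_t(W_i)$ on $W_i$. The only step requiring some care is justifying that the geometric solution is the only summable one (and thus rejecting the exponentially growing branch $b_+^N$ of the general solution), but this is exactly the argument already carried out in detail for Theorem~\ref{thm:acip} and presents no new obstacle here; everything else is bookkeeping with the substitution $\lambda \mapsto \lambda^t$.
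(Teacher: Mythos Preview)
Your proposal is correct and follows essentially the same approach as the paper: both observe that the transition probability matrix $\psi(t)^{-1}A^t$ with respect to $m_t$ is simply the Lebesgue matrix $A$ of \eqref{eq:A} with $\lambda$ replaced by $\lambda^t$, so the invariant-vector computation of Theorem~\ref{thm:acip} carries over verbatim under the substitution $\lambda \mapsto \lambda^t$. The paper compresses this into the phrase ``direct inspection,'' whereas you have spelled out the substitution and the passage from stationary vector to invariant measure via a branchwise-constant density, but the underlying argument is the same.
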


\begin{proof}
Direct inspection shows that
the probability matrix $\psi^{-1}(t) A^t$ preserves
the probability vector $(v^t_j)_{j \ge 1}$,
provided $\lambda^t \in (0,\frac12)$.
For $\lambda^t \ge \frac12$, the
vector $\underline v^t$ is not summable, and the drift argument  from
Section~\ref{sec:acip} shows that $m_t$ is in fact dissipative for $\lambda^t > \frac12$.
(The case $\lambda^t = \frac12$ is dealt with in Section~\ref{sec:null rec}.)
\end{proof}

\begin{remark}
Observe that in the case that there is an equilibrium state $\mu_t$ for $\Phi_t$, the density $\frac{d\mu_t}{dm_t}$ is unbounded.  Moreover, $\mu_t$ is not a Gibbs state.  These facts can be seen as follows.

Comparing Theorem~\ref{thm:conformal} with formula \eqref{eq:vt},
we obtain that
$\frac{\mu_t(W_n)}{m_t(W_n)}= \frac{1-2\lambda^t}{(1-\lambda^t)^{n+1}}$ and so, noticing that the density $\frac{d\mu_t}{dm_t}$ is constant on 1-cylinders, we have $\frac{d\mu_t}{dm_t}\big|_{W_n} \to \infty$ as $n\to \infty$.

Now let
$$
[e_0 \cdots e_{n-1}]= \{x \in (0,1] :
F^kx \in W_{e_j} \text{ for } 0 \le j < n \}.
$$
be our notation for an $n$-cylinder set.
To show that $\mu_t$ is not a Gibbs measure, we check that there is no
{\em distortion constant} $C \ge 1$ such that for all cylinder sets $[e_0\cdots e_n]$
\begin{equation*} 
\frac{1}C \le \frac{\mu_t([e_0\cdots e_n])}
{\exp \left(-np + S_n\Phi_t(x)\right)} \le C,
\end{equation*}
where $S_n\Phi_t(x) = \sum_{k = 0}^{n-1} \Phi_t(F^k_\lambda))$
is the $n$-th ergodic sum and $p = \Pconf(\Phi_t)$.

Since $m_t$ is $(t,p)$-conformal,
\begin{eqnarray*}
m_t([e_0\cdots e_n]) &=& e^{-np+S_n\Phi_t} m_t( F^n_\lambda([e_0\cdots e_n]) ) \\
 &=& e^{-np+S_n\Phi_t} \frac{1-2\lambda^t}{1-\lambda^t}
\sum_{k \ge e_{n-1}-1} \left( \frac{1-\lambda^t}{\lambda^t} \right)^k \\
& = &
e^{-np+S_n\Phi_t} \left( \frac{1-\lambda^t}{\lambda^t} \right)^{e_{n-1}-2}.
\end{eqnarray*}
Therefore
$\frac{ \mu_t([e_0\cdots e_n]) }{ e^{-np+S_n\Phi_t} }
= \frac{1-2\lambda^t}{(1-\lambda^t)^{1+e_0}} \cdot
\left( \frac{1-\lambda^t}{\lambda^t} \right)^{e_{n-1}-2}$
which is unbounded in $e_{n-1}$ and $e_0$.
\end{remark}

\section{A second approach to thermodynamic formalism for $F_\lambda$}
\label{sec:second app}

In this section we employ the matrix $B^t$ in place of $A^t$ used previously.  As we show in Section~\ref{sec:pressure}, this new matrix is more closely associated to the formalism of Sarig.

There are two different ways of computing the sizes and their sums of
$n$-cylinder sets, w.r.t.\ potential $-t \log |F'_\lambda|$ (or equivalently,
in terms of $t$-dimensional Hausdorff measure).
One way is shown before Proposition~\ref{prop:hypdim}, using the matrix $A^t$,
and is based on first computing the sizes of $1$-cylinders (namely
for every $e_0 = j$, there are two such cylinders, each of
``$t$-dimensional'' length $|W_j|^t$), and then considering
which part of an $n$-cylinder belongs to a
particular $(n+1)$-subcylinder.
If $\underline 1$ is the row-vector of ones and $\underline w_t$
is the row-vector with entries $|W_j|^t$, then this way gives
\begin{equation}\label{eq:dimcover1}
\sum_{e_0\dots e_{n-1}} |[e_0\dots e_{n-1}]|^t =
\underline w_t \cdot (A^t)^{n-1} \cdot \underline 1^T,
\end{equation}
where ${}^T$ stands for the transpose of a vector or matrix.

The other way is by considering the slopes of $F_\lambda^n$ and the
``$t$-dimensional'' length of the $F_\lambda^n$-image of each cylinder.
The matrix $B^t$ contains the inverse slopes (raised to the power $t$)
$$
B^t = (1-\lambda)^t \left( \begin{array}{ccccccc}
1      & 1 & 1 & 1 & 1 & \hdots   & \hdots \\
\lambda^t      & \lambda^t & \lambda^t & \lambda^t & \lambda^t & \hdots   & \hdots \\
0      & \lambda^t      & \lambda^t   & \lambda^t & \lambda^t & \lambda^t & \hdots \\
0      & 0      & \lambda^t        & \lambda^t   & \lambda^t & \lambda^t & \hdots \\
\vdots & \vdots & 0        & \lambda^t        & \lambda^t   & \lambda^t & \hdots \\
\vdots & \vdots & \vdots   & \vdots   & \vdots   & \vdots   & \ddots
\end{array} \right).
$$
This way gives
\begin{equation}\label{eq:dimcover}
\sum_{e_0 \dots e_{n-1}} |[e_0\dots e_{n-1}]|^t =
\underline 1 \cdot (B^t)^{n-1} \cdot \underline w_t^T.
\end{equation}
For this to make sense, it is important for both matrices to be
related,
and in fact, if
$A^t_K$ and $B^t_K$ are the $K \times K$-left-upper matrices of
$A^t$ and $B^t$ respectively, then we have the following result.

\begin{lemma}\label{lem:charA=charB}
For each $K$, the characteristic polynomials of $A^t_K$ and $B^t_K$
coincide.
\end{lemma}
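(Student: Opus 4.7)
The plan is to exhibit a diagonal conjugacy $B^t_K = D_K^{-1} A^t_K D_K$; since similar matrices share a characteristic polynomial, this proves the lemma in one line.

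The natural guess for the scaling is $D_K = \mathrm{diag}\bigl(|W_1|^{-t}, \dots, |W_K|^{-t}\bigr)$, or equivalently the diagonal matrix with $(i,i)$-entry $(1-\lambda)^{-t}\lambda^{-(i-1)t}$. To verify the identity I would compute $(D_K^{-1} A^t_K D_K)_{i,j} = |W_i|^t \, A^t_{i,j} \, |W_j|^{-t}$ using the explicit forms $A^t_{1,j} = (1-\lambda)^t \lambda^{(j-1)t}$ and $A^t_{i,j} = (1-\lambda)^t \lambda^{(j-i+1)t}$ for $i \ge 2$, $j \ge i-1$ (with $A^t_{i,j} = 0$ otherwise). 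The $\lambda$-factors depending on $j$ telescope away, leaving $(1-\lambda)^t$ across the first row and $(1-\lambda)^t \lambda^t$ in the nonzero positions of rows $i \ge 2$, matching $B^t_K$ exactly, including the zero pattern. Because $D_K$ is diagonal, restricting the infinite identity $B^t = D^{-1} A^t D$ to the leading $K \times K$ block commutes with both the matrix product and the inversion, so no boundary corrections appear and the similarity survives truncation.

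The computation itself is routine, so there is no real obstacle. The correct choice of $D$ is suggested by the two expressions \eqref{eq:dimcover1} and \eqref{eq:dimcover} for $\sum_{e_0 \dots e_{n-1}} |[e_0 \dots e_{n-1}]|^t$: with this scaling one has $\underline 1 \cdot D^{-1} = \underline w_t$ and $D \cdot \underline 1^T = \underline w_t^T$, so that the similarity $(B^t)^{n-1} = D^{-1} (A^t)^{n-1} D$ transforms one formula directly into the other. This not only pins down $D$ but also explains conceptually why the $A^t$- and $B^t$-pictures must be spectrally equivalent at every truncation level.
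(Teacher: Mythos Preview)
Your proof is correct and is essentially the same argument as the paper's: the paper multiplies the $i$-th column of $A^t_K$ by $\lambda^{-(i-1)t}$ and the $i$-th row by $\lambda^{(i-1)t}$, which is precisely conjugation by your diagonal matrix $D_K$ (up to the irrelevant scalar $(1-\lambda)^{-t}$). Your additional remark linking $D_K$ to the vectors $\underline w_t$ in \eqref{eq:dimcover1} and \eqref{eq:dimcover} is a nice conceptual bonus not made explicit in the paper.
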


\begin{proof}
The matrix $A^t_K$ can be transformed into the matrix $B^t_K$ by a sequence of
$2(K-1)$ elementary column and row operations. Namely, we multiply the $i$-th
column by $\lambda^{-(i-1)t}$ and the $i$-th row by $\lambda^{(i-1)t}$.
This has no effect on diagonal elements.
Hence, the same operations transform $(A^t_K - xI)$ into $(B^t_K - xI)$,
and the effects on the determinant cancel out.
\end{proof}

This means that we can dispense with the distinction between $A^t$ and
$B^t$ in characteristic polynomials
$$
\alpha_{t, K}(s) = (1-\lambda)^{-t} \det(A^t_K-sI) = (1-\lambda)^{-t}
\det(B^t_K-sI).
$$
In fact, $\alpha_{t, K}$ satisfies the recursive formula
$\alpha_{t, K}(s) = -s\alpha_{t, K-1}(s)  -s\lambda^t
\alpha_{t, K-2}(s)(s)$
which leads to
$$
\alpha_{t, K}(s) = \sum_{j=0}^{\lfloor (K+1)/2 \rfloor}
\binom{K+1-j}{j} (-1)^{K+1-j} s^{n-j} \lambda^{tj},
$$
that is, the coefficients of $\alpha_{0,K}$ are signed elements of
Pascal's triangle
along the $K$-th north-east-east diagonal.
Letting $s_{t,K}$ be the corresponding leading eigenvalue, we have
\[
\begin{array}{rclcrcl}
\alpha_{t,1}(s) &=&  -s+1 & \quad  & s_{t,1} &=& 1 \\[2mm]
\alpha_{t,2}(s) &=&  s^2-(\lambda^t+1)s & \qquad  & s_{t,2}&=&
\lambda^t+1 \\[2mm]
\alpha_{t,3}(s) &=&  -s^3+(2\lambda^t+1)s^2 - \lambda^t s & \qquad  &
s_{t,3} &=& \frac{2\lambda^t+1+ \sqrt{4\lambda^{2t}+1}}{2} \\[2mm]
\alpha_{t,4}(s) &=&  s^4-(3\lambda^t+1)s^3 +(\lambda^{2t}+2 \lambda^t) s^2 & \qquad  &
s_{t,4} &=& \frac{3\lambda^t+1+ \sqrt{5\lambda^{2t}-2\lambda^t+1}}{2}
\\[2mm]
\vdots \quad & & \qquad \vdots & & \vdots\ & &\qquad \vdots
\end{array}
\]

\begin{figure}
\begin{center}
\unitlength=4.3mm
\begin{picture}(25,12)(1,1)
\put(1,1.7){\resizebox{4cm}{4cm}{\includegraphics{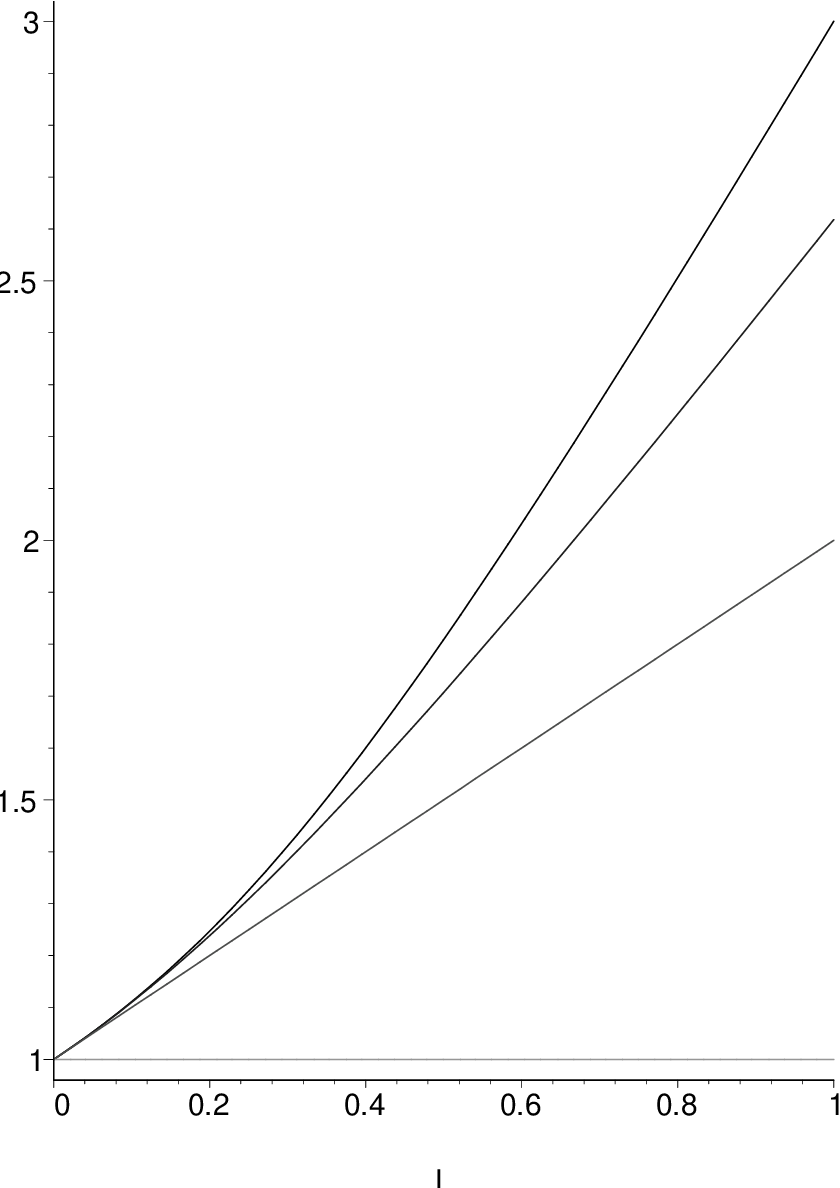}}}
\put(11,2.7){\scriptsize $s_{t,1}$}
\put(11,7.3){\scriptsize $s_{t,2}$}
\put(11,9.3){\scriptsize $s_{t,3}$}
\put(11,11.3){\scriptsize $s_{t,4}$}
\put(13,1.7){\resizebox{4cm}{4cm}{\includegraphics{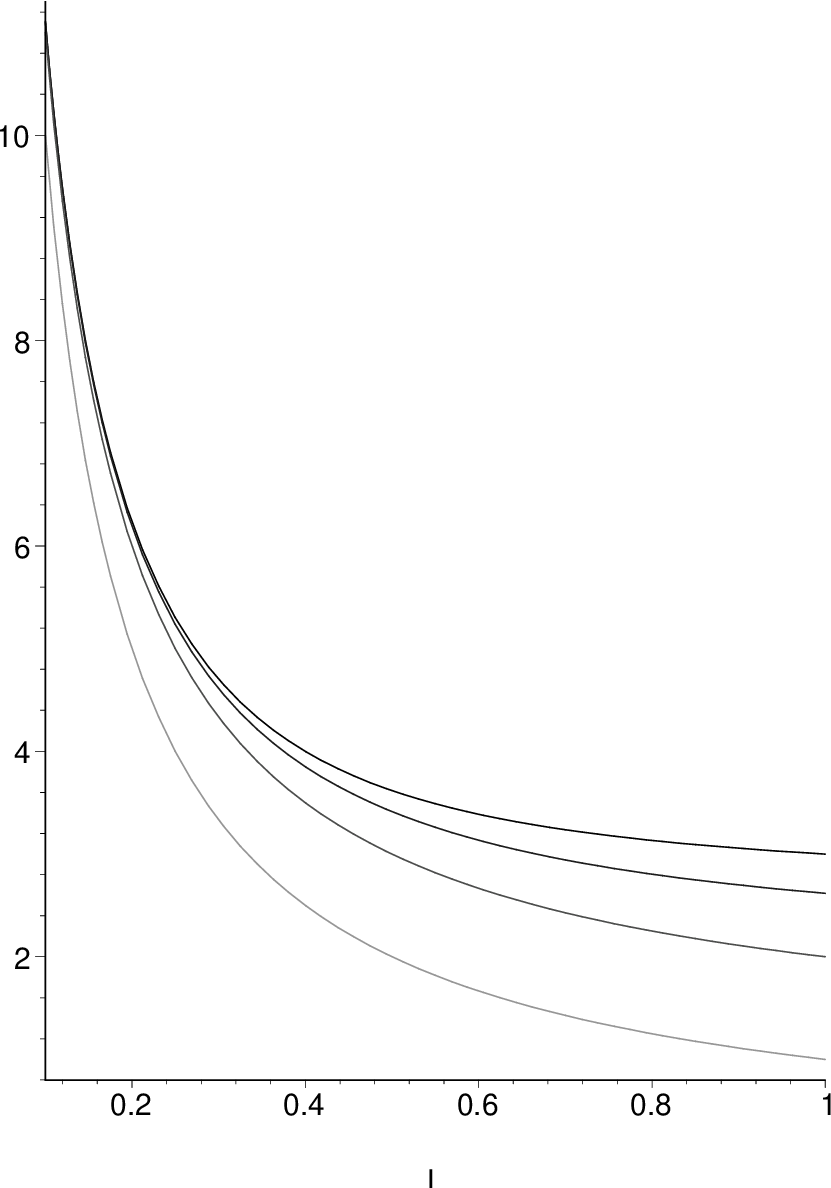}}}
\put(22.73,3.2){\scriptsize $s_{t,2}/\lambda^t$}
\put(22.73,4.7){\scriptsize $s_{t,4}/\lambda^t$}
\end{picture}
\end{center}
\caption{\label{fig:s1to4} Graphs of the eigenvalues $s_{t,j}$ (left) of
$A^t_j$ and $s_{t,j} /\lambda^t$ (right) for $j =
  1, \dots, 4$ as function of $\lambda^t \in [0,1]$.}
\end{figure}

\begin{remark}
For each $K$, the non-negative matrices $A^t_{K+1}$ and $B^t_{K+1}$
are at least as large element-wise
as the matrices $A_K$ and $B_K$ augmented with an extra row and column
of zeroes.
From this it follows that the leading eigenvalues
are increasing in $K$ for all $t$.
\end{remark}

One can verify by induction that $r^{-K} \alpha_{t,K}(r) = (-1)^K\lambda^{Kt}$
for $r = 1/(1-\lambda^t)$, which suggests that the leading root
of $\alpha_{t,K}$ is $x_{t,K}(t) = r(1-\lambda)^t < \psi(t)$,
and potentially in the limit $x(t) = \lim_{K\to\infty} x_{t,K} = \psi(t)$.
The next proposition confirms this for $\lambda^t \le \frac12$.

\begin{theorem}\label{thm:B ev}
The limit of the leading eigenvalues of the matrices $B^t_K$ is
\begin{equation}\label{eq:xt}
x(t) := \lim_{K \to \infty} x_{t,K} = \left\{ \begin{array}{llll}
\psi(t) = \frac{(1-\lambda)^t}{1-\lambda^t} & \text{ if } \lambda^t \le \frac12
& \text{ \ie } t \ge t_0 := \frac{-\log 2}{\log \lambda} ;\\[2mm]
4\lambda^t(1-\lambda)^t & \text{ if } \lambda^t \ge \frac12
& \text{ \ie } t \le t_0.
\end{array} \right.
\end{equation}
Hence $\log x(t)$ is analytic, except at $t = t_0$ where it is $C^1$ but not $C^2$.
Furthermore,
\begin{equation}\label{eq:t1}
\log x(t_1) = 0 \qquad \text{ for } \quad t_1 = \left\{ \begin{array}{ll} 1  & \text{ if } \lambda \le \frac12; \\[1mm]
-\frac{\log 4}{\log[\lambda(1-\lambda)]}  & \text{ if } \lambda \ge \frac12. \end{array} \right.
\end{equation}
\end{theorem}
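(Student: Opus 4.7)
The plan is to diagonalise the recurrence $\alpha_{t,K}(s)=-s\,\alpha_{t,K-1}(s)-s\lambda^t\,\alpha_{t,K-2}(s)$ via the roots $z_\pm(s)=\tfrac12(-s\pm\sqrt{s^2-4s\lambda^t})$ of $z^2+sz+s\lambda^t=0$. With $\alpha_{t,0}\equiv1$ and $\alpha_{t,1}(s)=1-s$ this gives
\[
\alpha_{t,K}(s)=\frac{(1+z_+(s))\,z_+(s)^K-(1+z_-(s))\,z_-(s)^K}{z_+(s)-z_-(s)},
\]
and the leading root $s_{t,K}$ of $\alpha_{t,K}$ relates to the leading eigenvalue $x_{t,K}$ of $B^t_K$ by the scaling $x_{t,K}=(1-\lambda)^t s_{t,K}$. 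The sign of the discriminant $s^2-4s\lambda^t$, which changes at $s=4\lambda^t$, cleanly splits the analysis.

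For $\lambda^t<\tfrac12$, set $r=1/(1-\lambda^t)$. Direct substitution shows $z_-(r)=-1$ and $z_+(r)=-\lambda^t r$, whence the identity $\alpha_{t,K}(r)=(-1)^K(\lambda^t r)^K$ noted before the theorem, with $|\alpha_{t,K}(r)|/r^K=\lambda^{Kt}\to 0$. For the upper bound I would apply a Collatz--Wielandt comparison: the vector $v_i=\lambda^{t(i-1)}$ is a positive right eigenvector of the infinite matrix $B^t$ with eigenvalue $\psi(t)$, and its truncation to the first $K$ coordinates satisfies $B^t_K\underline v^{(K)}\le\psi(t)\underline v^{(K)}$ entrywise (the dropped tail is non-negative), giving $x_{t,K}\le\psi(t)$, equivalently $s_{t,K}\le r$. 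Combined with the monotonicity of $s_{t,K}$ (remark just before the theorem) and the near-vanishing of $\alpha_{t,K}(r)$, this forces $s_{t,K}\to r$, hence $x_{t,K}\to\psi(t)$. The boundary $\lambda^t=\tfrac12$ then follows by continuity.

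For $\lambda^t>\tfrac12$ the roles of $z_\pm$ at $s=r$ flip: now $z_+(r)=-1$ and $|\alpha_{t,K}(r)|$ grows exponentially in $K$. Two observations confine the leading root to the complex regime $s<4\lambda^t$: on $(r,\infty)$ the rearranged equation $(z_+/z_-)^K=(1+z_-)/(1+z_+)$ has positive LHS but negative RHS, and on $(4\lambda^t,r)$ the positive LHS is $\le 1$ while the positive RHS is $\ge 1$, with strict inequality in the interior. In the complex regime, writing $z_\pm=\rho e^{\pm i\theta}$ with $\rho=\sqrt{s\lambda^t}$ and $s=4\lambda^t\cos^2\theta$, the closed form becomes
\[
\alpha_{t,K}(s)=\frac{\rho^{K-1}\bigl(\sin(K\theta)+\rho\sin((K+1)\theta)\bigr)}{\sin\theta}.
\]
Setting $\theta=\pi-\phi$ and expanding in small $\phi$, the equation $\sin(K\phi)=\rho(\phi)\sin((K+1)\phi)$ becomes, to leading order in $\phi$ and $K$,
\[
[K-2\lambda^t(K+1)]\phi+\tfrac{(2\lambda^t-1)K^3}{6}\phi^3\approx 0,
\]
whose smallest positive root is $\phi\sim\sqrt 6/K$, and this is genuinely the smallest positive zero because $\rho=2\lambda^t>1$ at $\phi=0$ prevents earlier cancellation. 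Hence $\theta\to\pi$, $s\to 4\lambda^t$, and $x_{t,K}\to 4\lambda^t(1-\lambda)^t$.

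The remaining assertions are book-keeping. Each of $\log\psi(t)$ and $\log[4\lambda^t(1-\lambda)^t]$ is analytic on its domain, both reduce to $\log[2(1-\lambda)^{t_0}]$ at $t_0$, and using $\psi'$ and $\psi''$ from Lemma~\ref{lem:psi} one checks the first derivatives match there but the second derivatives do not, giving the claimed second-order phase transition. The formula for $t_1$ follows by observing $\psi(1)=1$ (on the upper branch, which is in range precisely when $\lambda\le\tfrac12$) and otherwise solving $4[\lambda(1-\lambda)]^{t_1}=1$ on the lower branch. The main obstacle throughout is the complex-regime oscillation analysis, where the Taylor expansion must be handled carefully to pin down both the correct asymptotic $\phi\sim\sqrt 6/K$ and the absence of any hidden real-regime root above $4\lambda^t$.
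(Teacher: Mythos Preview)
Your approach is genuinely different from the paper's: you diagonalise the three-term recurrence for the characteristic polynomials $\alpha_{t,K}$ and analyse the resulting closed form, whereas the paper works with the left Perron eigenvector $(v_1,v_2,\dots)$ of $B^t_K$, derives a recursion for its entries, and uses positivity (Perron--Frobenius) together with a structural identity from the last two columns (``argument A'') to pin down the limit. Your route is more computational and self-contained; the paper's route leans on the interplay between eigenvalue and eigenvector. Both are valid.

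That said, there is a real gap in your $\lambda^t<\tfrac12$ case. You have the Collatz--Wielandt upper bound $s_{t,K}\le r$ and monotonicity, so $s_{t,K}\to s_\infty\le r$, but ``near-vanishing of $\alpha_{t,K}(r)$'' does not by itself force $s_\infty=r$: witness $p_K(s)=(s-1)^K$, for which $p_K(2)/2^K\to 0$ while the root stays at $1$. What closes the argument is a sign comparison you have the tools for but did not invoke. For any fixed $s\in(4\lambda^t,r)$ your closed form gives $\alpha_{t,K}(s)\sim -\dfrac{(1+z_-)z_-^K}{z_+-z_-}$, which has sign $(-1)^{K+1}$ for large $K$ (here $-1<z_-<z_+<0$), whereas $\alpha_{t,K}(r)=(-\lambda^t r)^K$ has sign $(-1)^K$. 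The sign change forces a root in $(s,r)$; since you have already excluded roots in $(r,\infty)$, this root is the leading one, and letting $s\uparrow r$ gives $s_{t,K}\to r$. Your ``by continuity'' for $\lambda^t=\tfrac12$ is likewise unjustified as stated (a monotone limit of continuous functions need not be continuous); but the boundary case falls out directly, since $r=4\lambda^t=2$ collapses the real regime and the complex-regime argument below still applies with $\rho(0)=1$.

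For $\lambda^t\ge\tfrac12$ your exclusion of $(4\lambda^t,\infty)$ is clean, but the Taylor expansion is more effort than you need and, as written, is only heuristic. Once you know the leading root sits in the complex regime, the substitution $\theta=\pi-\phi$ and your identity reduce $\alpha_{t,K}=0$ to $g_K(\phi):=\sin(K\phi)-2\lambda^t\cos\phi\,\sin((K{+}1)\phi)=0$. Since $g_K(0)=0$, $g_K'(0)=K-2\lambda^t(K{+}1)<0$ (so $g_K<0$ just to the right of $0$), and $g_K(\pi/(K{+}1))=\sin(\pi/(K{+}1))>0$, there is a genuine root in $(0,\pi/(K{+}1))$. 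As $s=4\lambda^t\cos^2\phi$ is decreasing on $(0,\pi/2)$, the smallest positive $\phi$-root corresponds to the largest $s$-root, hence $s_{t,K}\in(4\lambda^t\cos^2(\pi/(K{+}1)),\,4\lambda^t)$ and $s_{t,K}\to 4\lambda^t$. This replaces the asymptotic $\phi\sim\sqrt{6}/K$ with a direct bracketing, and handles $\lambda^t=\tfrac12$ as well.
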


\begin{figure}
\begin{center}
\unitlength=4.3mm
\begin{picture}(25,12)(3,1)
\put(1,1.7)
{\resizebox{5.5cm}{4.5cm}{\includegraphics{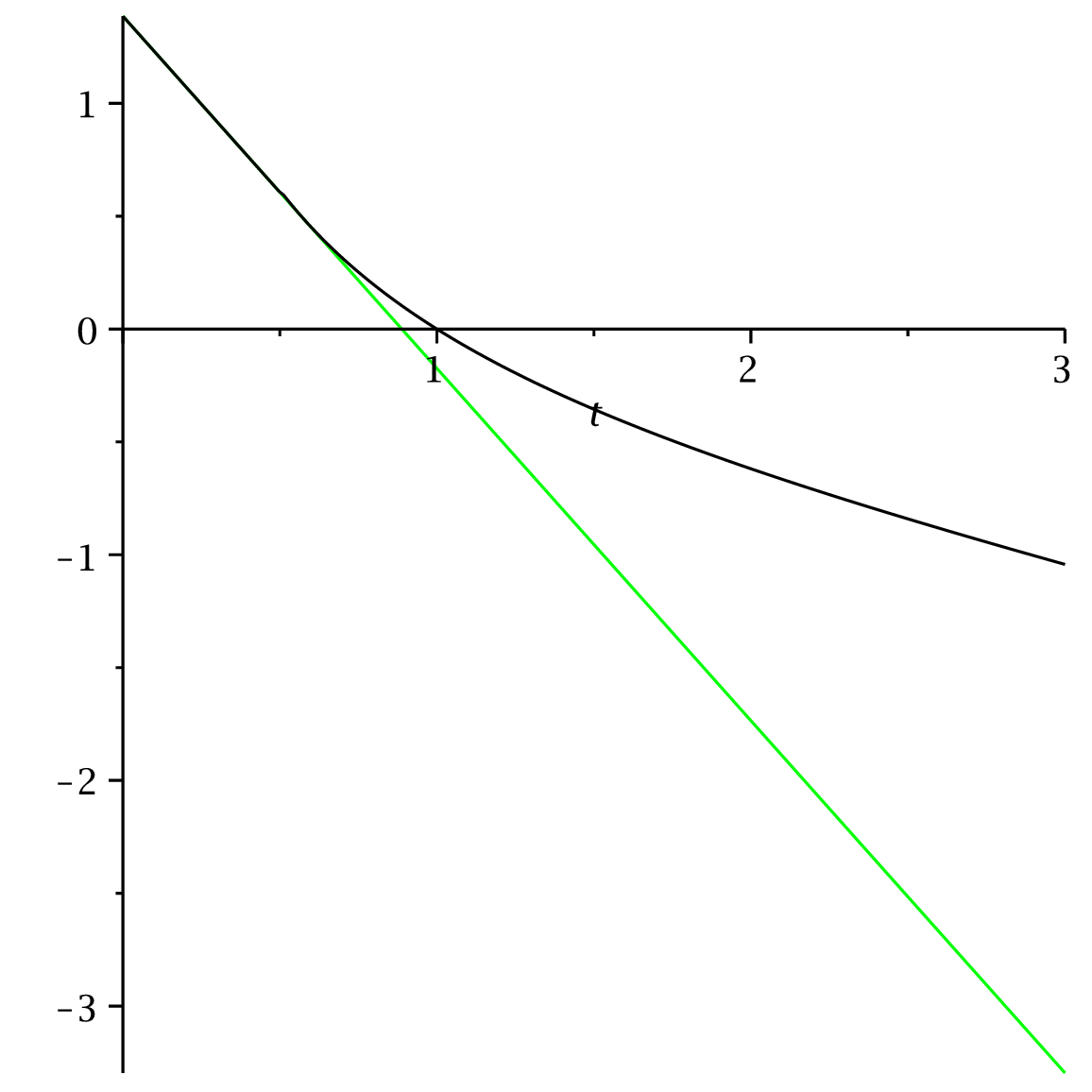}}}
\put(16,1.7)
{\resizebox{5.5cm}{4.5cm}{\includegraphics{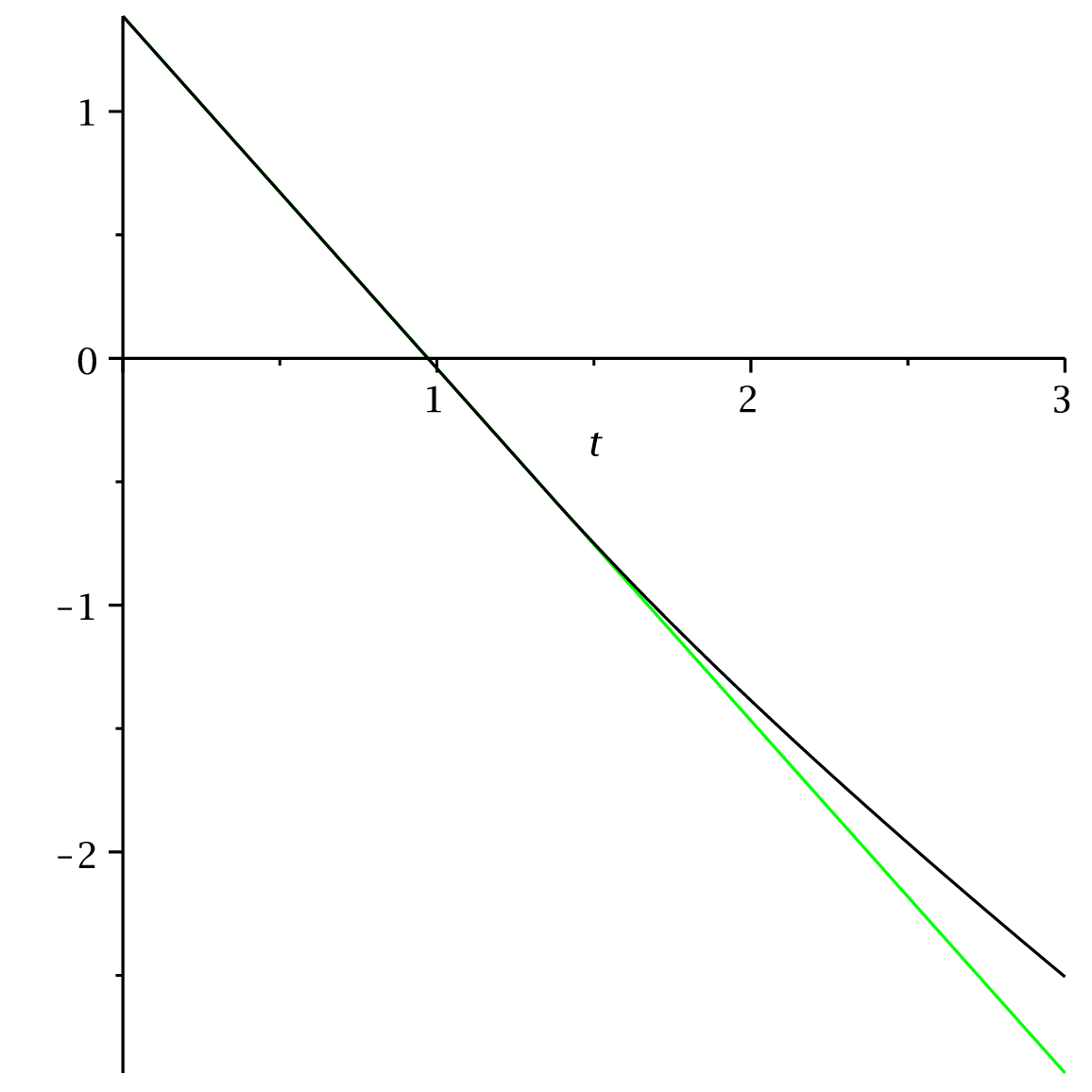}}}
\end{picture}
\end{center}
\caption{\label{fig:x(t)} Graphs of $P(-t \log|F'_\lambda|)$ for
$\lambda = 0.3$ with $t_0 \approx 0.5$, $t_1 = 1$ (left) and $\lambda = 0.6$
with $t_0 \approx 1.4$, $t_1 \approx 0.94$ (right).
In either case, $t \mapsto t \log (1-\lambda)$ is the oblique asymptote on the right. Since $ \frac{1}{\lambda(1-\lambda)} \ge |F'| \ge \frac{1}{1-\lambda}$,
we always have $\log 4 + t \log(1-\lambda) \ge P(\Phi_t) \ge
\log 4 + t \log[\lambda(1-\lambda)]$ (drawn in light colour).
}
\end{figure}

\begin{proof}
For fixed $K \in \N$, write $x_{t,K} = s_{t,K}(1-\lambda)^t
= r_{t,K} \lambda^t(1-\lambda)^t$.
For brevity, we will write $r = r_{t,K}$ and
$r_\infty = \lim_{K \to \infty} r_{t,K}$.

Let $\underline v = (v_1,v_2,\dots )$ be the left eigenvector of $(1-\lambda)^{-t} B^t_K$
for eigenvalue $s_{t,K}$, scaled such that $v_1 = 1$.
By the Perron-Frobenius Theorem, we know that $v_j > 0$ for
all $j$.
The shape of the columns of $B_K^t$ imply that $v_j \le v_{j+1}$ for
all $j \le K$, and in fact $v_{K-1} = v_K$ (as the last two columns
of $B^t_K$ are identical). We can recursively solve
$v_2 = r-\lambda^{-t}$ and
$v_3 = (r-1) v_2 - \lambda^{-t} = r^2-(1+\lambda^{-t})r$, and since
$v_2 > 0$, this already gives $s_{t,K} = \lambda^t r >
\lambda^t(1+\lambda^{-t}) = (1+\lambda^t)$.
Similarly, $v_3 \ge v_2$ implies that $r^2 - (2+\lambda^{-t})r +
\lambda^{-t} \ge 0$,
whence $r \ge \frac12(2+\lambda^{-t} + \sqrt{4+\lambda^{-2t}})$.
The general term is
$$
v_j = (r-1) v_{j-1} - \sum_{k=2}^{j-2} v_k - \lambda^{-t}
\quad \text{ for } 3 \le j \le K.
$$
If we write $v_j = a_j - b_j \lambda^{-t}$,
we find
$$
\left\{ \begin{array}{ll}
a_n = ra_{n-1} - \sum_{k=1}^{n-1} a_k + 1 &\qquad a_2 = r,\ a_1 = 1,\ a_0 = 0
\\
b_n = rb_{n-1} - \sum_{k=1}^{n-1} b_k + 1 &\qquad b_2 = 1,\ b_1 = 0.
\end{array} \right.
$$
An induction proof then gives that
\begin{equation}\label{eq:an}
a_n = r(a_{n-1} - a_{n-2}) \qquad b_n = a_{n-1}.
\end{equation}
This recursive formula has the characteristic equation
$\mu^2-\mu r - r = 0$, with solutions
$\mu_\pm = \frac12 (r \pm \sqrt{r^2-4r})$.
Writing $a_n = A_+ \mu_+^n + A_- \mu_-^n$, we readily find
that $A_+ = -A_- = 1/\sqrt{r^2-4r}$, and therefore
$$
a_n = \left\{ \begin{array}{ll}
\frac{1}{\sqrt{r^2-4r}} \left( \mu_+^n - \mu_-^n\right)
& \text{ if } r > 4;\\[2mm]
n 2^{n-1} & \text{ if } r = 4;\\[2mm]
\frac{2\sin(\alpha n)}{\sqrt{ 4r-r^2} }\ r^{n/2} & \text { if } r < 4
\text{ and } \tan \alpha = \sqrt{\frac4r -1}.
\end{array}\right.
$$
\begin{remark} Putting in the numbers for $a_1 = 1$, $a_2 = r$,
$a_n = r(a_{N-1}-a_{n-2})$, we get
\begin{eqnarray*}
a_3 &=& r^2-r, \\
a_4 &=& r^3 - 2r^2, \\
a_5 &=& r^4-3r^3+r^2, \\
a_6 &=& r^5-4r^4+3r^3, \\
a_7 &=& r^6-5r^5+6r^4-r^3 , \\
a_8 &=& r^7-6r^6+10r^5-4r^4, \\
\vdots\ &  & \qquad \qquad \vdots
\end{eqnarray*}
which are the signed entries along the north-east-east diagonals in
Pascal's triangle, so
$$
a_K(x) = \sum_{j=0}^{\lfloor (K-1)/2 \rfloor} \binom{K+1-j}{j}  (-1)^i r^{K-j}.
$$
\end{remark}
{\bf I}:
First assume that $\lambda^t < \frac12$, so $\lambda^{-t} > 2$.\\
If $r_\infty > 4$, then
$a_j$ will grow exponentially fast with rate
$\mu_+ \ge r/2 \ge 2$, and
$$
v_j = a_j - \lambda^{-t}b_j = \frac{1}{ \sqrt{r^2-4r} }
\left[ (\mu_+-\lambda^{-t} ) \mu_+^{j-1} +
(\lambda^{-t}-\mu_- ) \mu_-^{j-1} \right].
$$
If $\mu_+ > \lambda^{-t}$, then indeed $v_j > 0$ for all $j$,
but we get a contradiction via the  following argument
(which we will call {\em argument A}):
Since $v$ is a left eigenvector for eigenvalue $r_{t,K} = r\lambda^t$,
$$
r v_{K-2} = \sum_{j=2}^{K-1} v_j + v_1 \lambda^{-t}
\quad \text{ and }\quad   rv_{K-1} = \sum_{j=2}^K v_j  + v_1 \lambda^{-t}.
$$
Subtract the two equations, and recall that $v_K = v_{K-1}$.
Then
$r(v_{K-1} - v_{K-2}) =  v_K =  v_{K-1}$,
so $r \sim \mu_+/(\mu_+-1) \le 2$, contrary to the assumption that
$r \ge 4$.

The case $\mu_+ = \lambda^{-t}$ implies that
$$
r_\infty = \frac{\lambda^{-t}}{{1-\lambda^t}}
\quad (\text{whence } x = (1-\lambda)^t \lambda^t r_\infty = \psi(t))
\quad \text{ and } \quad
\mu_- = \frac{1}{1-\lambda^t} > 1.
$$
Note that indeed $r_\infty \ge 4$ if $\lambda^t \le \frac12$.
In this case,
\begin{equation}\label{eq:vj}
v_j =  \frac{\lambda^{-t} - \mu_-}{\sqrt{r^2-4r}}\ \mu_-^{j-1}
= \frac{1}{1-\lambda^t}\ \mu_-^{j-1},
\end{equation}
and argument A gives
$r_\infty =
\mu_-/(\mu_--1) = \lambda^{-t}$, which still looks like a
contradiction. However, argument A relies on having a
finite matrix $B^t_K$ with the two last columns identical.
If $K < \infty$, then we can in fact still take
$\mu_+ < \lambda^{-t}$ close to $\lambda^{-t}$, because \eqref{eq:vj}
only implies that $v_j < 0$ for large $j$.
This gives $x_{t,K} < \psi(t)$, but $\mu_+ \to \lambda^{-t}$ and
$x_{t,K} \to \psi(t)$ as $K \to \infty$.

If $r_\infty = 4$, then $v_j = j2^j - \lambda^{-t} (j-1)2^{j-1}
= 2^j[1 - \frac{\lambda^{-t}-2}{2} (j-1)]$,\ and this is
negative as soon as $j > 1/(1-2\lambda^t)$.

If $r_\infty < 4$, then
\begin{equation}\label{eq:vj2}
v_j = \frac{2}{\sqrt{4r-r^2}}\ r^{(n-1)/2}
\left[ r^{1/2} \sin(\alpha j) -
\lambda^{-t} \sin(\alpha(j-1)) \right],
\end{equation}
and this becomes negative
when $\sin(\alpha j) < \sin(\alpha(j-1))$.

So in the limit $K \to \infty$, this rules out $r \le 4$,
and therefore $x_t = \lim_{K \to \infty} x_{t,K} = \psi(t)$
as claimed.
\\[3mm]
{\bf II}: Assume $\lambda^t = \frac12$, so $\lambda^{-t} = 2$.\\
Now $r_\infty > 4$ gives that $\mu_+ > 2$, so
$v_j = \mu_+^{j-1}(\mu_+ - \lambda^{-t}) + \mu_-^{j-1}(\lambda^{-t}-\mu_-) > 0$; in fact $v_j$ increases exponentially fast
and $v_j/v_{j-1} \to \mu_+$. Therefore argument A implies
that for large $K$, $r \sim \mu_+/(\mu_+-1)
\le r/(r-2) \le r/2$, which is a contradiction.

The case $r_\infty < 4$ leads to a contradiction in the same way
as in Case I.
So the remaining possibility is $r_\infty = 4$.
In this case $v_j = 2^j$, and argument A would
again give a contradiction,
if we could apply it to an infinite matrix.
For finite matrices $B^t_K$, taking $r < 4$
gives $v_j = \frac{2}{\sqrt{4r-r^2}}\ r^{(n-1)/2}
\left[ r^{1/2} \sin(\alpha j) - 2 \sin(\alpha(j-1)) \right]$
as in \eqref{eq:vj2}, but if $r^{1/2}$ is sufficiently close to $2$
and $\alpha = \sqrt{\frac{4}{r}-1}$ sufficiently close to $0$,
then $v_j$ is still positive for $j \le K$.
Hence $r \to 4$ and $x_{t,K} \to 1$ as $K \to \infty$,
\\[3mm]
{\bf III}: The final case is $\lambda^t > \frac12$,
so $\lambda^{-t} < 2$.\\
Now $r_\infty > 4$ fails by argument A as in Case I, and
if $r_\infty < 4$, then $\alpha = \sqrt{\frac{4}{r}-1}$
is bounded away from $0$ uniformly in $K$.
Since $v_j$ is as given by \eqref{eq:vj2}, it becomes negative
for $j$ sufficiently large (and independently of $K$),
namely when $\sin(\alpha j) \le 0 \le \sin(\alpha(j-1))$,

Therefore $r_\infty = 4$, as it is the only way allowing
$\alpha \to 0$ as $K \to \infty$.

This completes the proof \eqref{eq:xt}.
Direct computation shows that at $t = t_0$, the
left and right derivatives are $\log[\lambda(1-\lambda)]$,
but the second left derivative is $0$ and right second derivative
is $2\log^2 \lambda$.
Therefore $t \mapsto \log x(t)$ is $C^1$ but not $C^2$.
The formula for $t_1$ is straightforward.
\end{proof}

\section{The size of hyperbolic and escaping sets}
\label{sec:esc hyp dim}

In this section we compute the Hausdorff dimension
of hyperbolic sets (Proposition~\ref{prop:hypdim}) and
the escaping set (Proposition~\ref{prop:Omega}), which combined
prove Theorem~\ref{thm:main hyp dim}.

\begin{proposition}\label{prop:hypdim}
Let
$\Lambda_K = \{ x \in (0,1] : F^i(x) \notin \cup_{k > K}(W_k) \text{ for all } i\ge 0\}$.
Then
\begin{equation}\label{eq:dimH esc}
\lim_{K \to \infty} \dim_H(\Lambda_K) = \left\{ \begin{array}{ll}
1 & \text{ if } \lambda \le \frac12; \\[1mm]
-\frac{\log 4}{\log[\lambda(1-\lambda)]} \qquad & \text{ if } \lambda \ge \frac12.
\end{array} \right.
\end{equation}
\end{proposition}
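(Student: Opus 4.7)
The plan is to express $\dim_H(\Lambda_K)$ via Bowen's formula for the subshift of finite type coding $\Lambda_K$, identify the associated pressure with $\log x_{t,K}$, and then let $K\to\infty$ using Theorem~\ref{thm:B ev}.

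First, observe that $\Lambda_K$ is the maximal $F_\lambda$-invariant subset of $\bigcup_{k=1}^K W_k$, coded by the subshift of finite type on the alphabet $\{1,\dots,K\}$ with transitions $W_i\to W_j$ inherited from the Markov partition (\ie\ $j\ge i-1$). From state $1$ every state in $\{1,\dots,K\}$ is reachable in one step, and from any state one returns to $1$ in at most $K$ steps, while the self-loop $1\to 1$ provides aperiodicity; hence the truncated transition matrix is primitive. Since $F_\lambda$ is piecewise linear and uniformly expanding on each $W_i$, the system $(F_\lambda,\Lambda_K)$ is a conformal expanding Markov repeller, and the standard Bowen formula yields $\dim_H(\Lambda_K)=d_K$, the unique $t$ for which the topological pressure of $-t\log|F_\lambda'|$ on $\Lambda_K$ vanishes.

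Next, I would identify this pressure with $\log x_{t,K}$. Restricting formula~\eqref{eq:dimcover} to words in $\{1,\dots,K\}^n$ gives
\[
\sum_{e_0\dots e_{n-1}\in\{1,\dots,K\}^n}\!\!|[e_0\dots e_{n-1}]|^t \;=\; \underline 1_K\cdot (B^t_K)^{n-1}\cdot \underline w_{t,K}^T,
\]
and primitivity of $B^t_K$ combined with Perron--Frobenius implies that this quantity grows like $C\cdot x_{t,K}^{n-1}$ with $C>0$, the constants being the inner products of $\underline 1_K$ and $\underline w_{t,K}$ with the strictly positive Perron eigenvectors. Hence the pressure equals $\log x_{t,K}$, and $d_K$ is characterised by $x_{d_K,K}=1$. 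Every entry of $B^t_K$ is of the form $(1-\lambda)^t$ or $(1-\lambda)^t\lambda^t$, which is strictly decreasing in $t$; the spectral radius $x_{t,K}$ is therefore strictly decreasing and continuous in $t$, so $d_K$ is well defined.

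Finally, I pass to the limit. By the remark following Lemma~\ref{lem:charA=charB}, $K\mapsto x_{t,K}$ is increasing, with pointwise limit $x(t)$ computed in Theorem~\ref{thm:B ev}; moreover the explicit formulas there show that $x$ is strictly decreasing, with $x(t_1)=1$ for $t_1$ as in \eqref{eq:t1}. For $t<t_1$, strict monotonicity of $x$ gives $x(t)>1$, so $x_{t,K}>1$ for $K$ large, forcing $d_K>t$; conversely $x_{t_1,K}\le x(t_1)=1$ together with the monotonicity of $x_{\cdot,K}$ forces $d_K\le t_1$. Therefore $d_K\nearrow t_1$, which matches the right-hand side of \eqref{eq:dimH esc} in both cases $\lambda\le\frac12$ and $\lambda\ge\frac12$. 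The main obstacle is the justification of Bowen's formula in this setting, which rests on the usual conformal expanding Markov arguments applied to $\Lambda_K$; once $\dim_H(\Lambda_K)=d_K$ is in hand, the monotone convergence to $t_1$ is routine.
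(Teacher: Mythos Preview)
Your argument is correct and runs parallel to the paper's, with one methodological difference worth noting. Both proofs identify the exponential growth rate of $\sum_{e_i\le K}|[e_0\dots e_{n-1}]|^t$ with the leading eigenvalue $x_{t,K}$ of $B^t_K$ and then invoke Theorem~\ref{thm:B ev} for the limit $K\to\infty$; your monotone-convergence argument for $d_K\nearrow t_1$ is the same in spirit as the paper's. Where you differ is in obtaining $\dim_H(\Lambda_K)=d_K$: you appeal directly to Bowen's formula for the conformal expanding repeller $\Lambda_K$, whereas the paper separates the two inequalities. For the upper bound it uses the cylinder covers exactly as you do, but for the lower bound it passes to the subset $\hat\Lambda_{K+1}=\{x\in\Lambda_{K+1}:F_\lambda^n(x)\notin W_1\text{ for all }n\ge0\}$, on which every branch has the \emph{same} slope $[\lambda(1-\lambda)]^{-1}$; this makes $\hat\Lambda_{K+1}$ a self-similar Cantor set with bounded gap/bridge ratios, so its dimension is read off from the zero of the leading eigenvalue of the reduced matrix $\hat B^t_{K+1}$, and a repeat of the computation in Theorem~\ref{thm:B ev} shows this zero has the same limit $t_1$. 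Your route is cleaner and avoids the second eigenvalue analysis, at the cost of citing Bowen's formula as a black box; the paper's route is more self-contained but slightly longer.
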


\begin{proof}
The cylinder sets $\{ [e_0 \dots e_{n-1}] \}_{e_i \le K}$ form a cover of $\Lambda_K$ of diameter tending to $0$ as $n \to \infty$.
Since equations \eqref{eq:dimcover1} and \eqref{eq:dimcover} hold for the $K \times K$ matrices $A_K^t$ and $B_K^t$ as well, we find
$$
H_{t,K}^n := \sum_{e_i \le K} |[e_0 \dots e_{n-1}]|^t
= \underline w_t \cdot (A^t_K)^{n-1} \cdot \underline 1^T
= \underline 1 \cdot (B^t_K)^{n-1} \cdot \underline w_t^T.
$$
These quantities develop exponentially in $n$ according to the
leading eigenvalue $x_{t,K}$ of $A_K^t$ (or $B^t_K$).
Therefore $\inf\{ t : H_{t,K}^n < \infty \text{ for all } n \in \N\}$
coincides with the first zero of $t \mapsto \log x_{t,K}$.
Taking the limit $K \to \infty$, we find by Theorem~\ref{thm:B ev} that
$$
\lim_{K\to \infty}\dim_H(\Lambda_K) \le t_1 =
\left\{ \begin{array}{ll} 1  & \text{ if } \lambda \le \frac12; \\[1mm]
-\frac{\log 4}{\log[\lambda(1-\lambda)]}  & \text{ if } \lambda \ge \frac12. \end{array} \right.
$$
Now for a lower bound, we first treat the case $\lambda \geq \frac12$. Take  $\hat \Lambda_{K+1} = \{ x \in  \Lambda_{K+1} :
F_\lambda(x) \notin W_1 \text{ for all } n \ge 0\}$.
Observe that
$\{ [e_0 \dots e_{n-1}] \}_{2 \le e_i \le K+1}$ is a cover of
$\hat \Lambda_{K+1}$ of the same cardinality as
$\{ [e_0 \dots e_{n-1}] \}_{1 \le e_i \le K}$,
and consisting of intervals of length
$[\lambda(1-\lambda)]^{n-1} |W_{e_{n-1}}|$.
Moreover, $\hat \Lambda_{K+1}$ is a self-similar Cantor set
(with bounded ratios between bridges and gaps)
and by fairly standard arguments one can conclude that
its dimension is given by the zero of the leading eigenvalue
$t \mapsto \log \hat x_{t,K+1}$  of $\hat A_{K+1}^t$, which is defined
as $A_{K+1}^t$ with the first row and column removed.
The same argument as used in Lemma~\ref{lem:charA=charB}
shows that  $\hat A_{K+1}^t$ and  $\hat B_{K+1}^t$ (which is defined
as $B_{K+1}^t$ with the first row and column removed)
have the same characteristic polynomial, but
$$
\hat B_{K+1}^t = \lambda^t(1-\lambda)^t
\left( \begin{array}{ccccccc}
1 & 1 & 1 & 1 & 1 & \hdots   & \hdots \\
1 & 1 & 1 & 1 & 1 & \hdots   & \hdots \\
0 & 1 & 1 & 1 & 1 & 1 & \hdots \\
0 & 0 & 1 & 1 & 1 & 1 & \hdots \\
\vdots & \vdots & 0  & 1 & 1 & 1 & \hdots \\
\vdots & \vdots & \vdots  & \vdots   & \vdots   & \vdots   & \ddots
\end{array} \right) = \lambda^t (1-\lambda)^t B^0_K
$$ 
as in Theorem~\ref{thm:B ev}. 
Therefore the leading eigenvalue $\hat x_{t,K} = \lambda^t(1-\lambda)^t x_{0,K} \to 4 \lambda^t(1-\lambda)^t$
as $K \to \infty$.
The required lower bound for the zero $t_1$ follows,
proving \eqref{eq:dimH esc}.

Now for the case $\lambda < \frac12$, 
write $F=F_\lambda$ (we will use $\lambda$ for leading eigenvalue shortly). 
Then as in Theorem~\ref{thm:RPF}, $P(-\log|DF|)=0$.  Moreover, $P(-t\log|DF|)>0$ for any $t<1$.  
Truncate the system at symbol $K$ to get pressure $P_K$ and note that by Sarig's theory \cite{Sartherm} 
for $K$ large enough $P_K(-t\log|DF|)>0$.  There must be an equilibrium state $\mu$ on this 
truncated system for this potential and it has
$$
\dim_H \mu =\frac{h(\mu)}{\lambda(\mu)} = \frac{P_K(-t\log|DF|)}{\lambda(\mu)} +t>t,
$$
which implies $\dim_H(\Lambda_K)>t$ also.  Recalling that $t$ was an arbitrary number $<1$, we obtain the lower bound $1$.
\end{proof}

The following proposition, giving a different proof to the
result of Stratmann \& Vogt
\cite{StraVogt},
proves the other part of Theorem~\ref{thm:main hyp dim}.

\begin{proposition}\label{prop:Omega}
The dimension $\dim_H(\Omega_{\lambda})$ is given by
\begin{equation}\label{eq:dimH_Omega}
\dim(\Omega_\lambda) = \left\{ \begin{array}{ll}
-\log 4/\log[\lambda(1-\lambda)] \qquad & \text{ if } \lambda \le \frac12; \\[1mm]
1 & \text{ if } \lambda \ge \frac12.
\end{array} \right.
\end{equation}
\end{proposition}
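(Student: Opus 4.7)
My proof plan splits into two cases.

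\emph{Case $\lambda \ge 1/2$.}
For $\lambda > 1/2$ the drift in the proof of Theorem~\ref{thm:acip} satisfies $\drift(\lambda) = (2\lambda-1)/(1-\lambda) > 0$, so the central limit argument there shows that Lebesgue-almost every orbit has $\chi_n(x) \to \infty$ and hence $F_\lambda^n(x) \to 0$. Therefore $\Omega_\lambda$ has full Lebesgue measure and $\dim_H(\Omega_\lambda) = 1$. The boundary value $\lambda = 1/2$, at which the formula still evaluates to $1$, is absorbed into the lower-bound construction of Case~2 by letting the window width $M \to \infty$.

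\emph{Case $\lambda \le 1/2$, lower bound $\dim_H(\Omega_\lambda) \ge t_1$.}
Set $t_1 := -\log 4/\log[\lambda(1-\lambda)]$. I would build a Cantor subset $C \subset \Omega_\lambda$ by concatenating hyperbolic pieces at successively deeper scales. Fix a window width $M \ge 2$ and a sparse sequence of levels $K_1 < K_2 < \cdots$ with $K_{j+1} \ge K_j + M + 2$. Orbits in $C$ are required to lie in the window $\{W_{K_j}, \ldots, W_{K_j+M}\}$ during the $j$-th time block and to transition into the next window at a prescribed instant. On each window $F_\lambda$ is uniformly expanding with slope $1/[\lambda(1-\lambda)]$, and the adjacency matrix coincides (up to reindexing) with the top-left $(M+1)\times(M+1)$ block $A^0_{M+1}$ analysed in Theorem~\ref{thm:B ev}, whose leading eigenvalue $s_{0,M+1}$ tends to $4$ as $M \to \infty$. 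Bowen's formula for a uniformly expanding finite-alphabet Markov subshift gives $\dim_H(C) = \log s_{0,M+1}/\log(1/[\lambda(1-\lambda)])$; since $K_j \to \infty$ forces $\chi_n \to \infty$, indeed $C \subset \Omega_\lambda$. Letting $M \to \infty$ yields $\dim_H(\Omega_\lambda) \ge t_1$.

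\emph{Case $\lambda \le 1/2$, upper bound $\dim_H(\Omega_\lambda) \le t_1$.}
I would cover $\Omega_\lambda$ by a countable family of sets, each supported inside a moving window of fixed width. For each integer sequence $\underline K = (K_j)_{j \ge 0}$ with $K_j \to \infty$, each width $M$, and each integer $N$, let $A_{\underline K, M, N}$ denote the set of orbits whose itinerary from time $N$ onwards lies in windows of width $M$ tracking $\underline K$. The escape condition forces every $x \in \Omega_\lambda$ into some $A_{\underline K, M, N}$ after discretising $\underline K$ to a countable subset of integer sequences, so the decomposition is countable. Each $A_{\underline K, M, N}$ is covered by cylinders drawn from the successive windows, and the sum of $t$-th powers of their sizes is controlled by products of the leading eigenvalue of the finite matrix $\hat B^t_{M+1}$ from Theorem~\ref{thm:B ev}. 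In the dissipative regime this eigenvalue is $4\lambda^t(1-\lambda)^t$, which equals $1$ exactly at $t = t_1$, giving finite $s$-Hausdorff measure of $A_{\underline K, M, N}$ for every $s > t_1$; countable stability of $\dim_H$ then closes the argument.

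\emph{Main obstacle.}
The delicate step is the upper bound. Covering $\Omega_\lambda$ by cylinders lying in the \emph{infinite} deep subsystem $\{W_K, W_{K+1}, \ldots\}$ only yields the trivial bound $\dim_H \le 1$, because the limiting transfer-operator eigenvalue in that conservative regime is $\psi(t)$, which vanishes at $t = 1$ rather than at $t_1$. The escape condition $\chi_n \to \infty$ must be exploited to restrict to \emph{truncated} $(M+1)$-windows so that the dissipative eigenvalue $4\lambda^t(1-\lambda)^t$ governs the count. The combinatorial crux is producing the right countable discretisation of escape schedules so that the union over schedules is countable and each piece lies inside a bounded-width window.
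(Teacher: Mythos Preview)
Your treatment of $\lambda \ge \tfrac12$ agrees with the paper. Your lower bound for $\lambda < \tfrac12$ is also sound in outline, though different from the paper's: instead of building a drifting Cantor set inside $\Omega_\lambda$ directly, the paper fixes a second parameter $\gamma\in(\tfrac12,1-\lambda)$, uses that the \emph{symbolic} escaping set is parameter-independent, and compares cylinder lengths at the two parameters via the relation $[\lambda(1-\lambda)]^t=[\gamma(1-\gamma)]^u e^{u\eta}$. One technical point you should not gloss over in your construction is that an $n$-cylinder with $e_i\ge2$ has length $[\lambda(1-\lambda)]^{n-1}|W_{e_{n-1}}|$, and $|W_{e_{n-1}}|\asymp\lambda^{K_j}$ in the $j$-th window; you therefore need $K_j$ to grow sub-linearly in the block length so that this factor does not depress the dimension.

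The genuine gap is in your upper bound. A bounded-width moving window cannot accommodate every escaping orbit: $\chi_n$ may increase by an arbitrary amount in one step, so an orbit can repeatedly jump far ahead and then decrease by one for a long stretch (e.g.\ $\chi$ climbs to $2^k$, drifts to $2^{k-1}$, jumps to $2^{k+1}$, \dots). Such an orbit has $\chi_n\to\infty$ but never fits in any window of fixed width $M$, no matter how the centre $K_j$ is chosen. Allowing the width to vary with the orbit destroys countability. The ``combinatorial crux'' you identify is therefore not a technicality but a genuine obstruction: there is no countable family of bounded-width tubes covering $\Omega_\lambda$. Note too that the naive cover by all cylinders avoiding $W_1$ gives only the trivial bound $1$, since for $\lambda<\tfrac12$ the limiting eigenvalue on that subsystem is $\psi(t)$ (vanishing at $t=1$), not $4[\lambda(1-\lambda)]^t$.

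The paper circumvents this entirely. For $\gamma>\tfrac12$ one has $\mathrm{Leb}(\Omega_\gamma)>0$, hence $\dim_H(\Omega_\gamma)=1$; moreover Lebesgue-typical escaping points satisfy $\chi_n(x)\lesssim \drift(\gamma)\,n$, with $\drift(\gamma)\to0$ as $\gamma\downarrow\tfrac12$. One then transports covers of $\Omega''_\gamma$ to the parameter $\lambda$: the same $n$-cylinder has $|C|_\lambda^t/|C|_\gamma^u = e^{u\eta n}(\lambda^t/\gamma^u)^{\chi_n(C)-2}$, and because $\chi_n$ grows only like $\drift(\gamma)n$, the correction is absorbed by choosing $\eta$ small. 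Taking $\gamma\downarrow\tfrac12$, $u\to1$, $\eta\to0$ gives both bounds simultaneously. The key idea you are missing is this parameter-comparison trick, which replaces any combinatorial decomposition of escape schedules by a single measure-theoretic estimate at a nearby dissipative parameter.
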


\begin{proof}
Let $\Omega'_{\lambda} = \{ x \in \Omega_{\lambda} : F_\lambda^n(x) \notin W_1 \text{ for all } n \ge 0\}$.
Since $\Omega_{\lambda} = \cup_n F_\lambda^{-n}(\Omega'_{\lambda})$,
it suffices to compute the Hausdorff dimension of
$\Omega'_{\lambda}$.
We can use \eqref{eq:dimcover} to approximate the $t$-dimensional
Hausdorff mass of $\Omega'_{\lambda}$, and if we replace the first
row of $A^t = A^t_{\lambda}$ by zeroes, this has no effect on the
estimate, because by definition no point in $\Omega'_{\lambda}$
 ever ``uses'' the first state.

For $\lambda > \frac12$ we know that $F_\lambda$ is dissipative,
so $\lim_{n\to \infty}F_\lambda^n(x) = 0$ Lebesgue-a.e.\ $x$, and the Hausdorff
dimension of such points is certainty $1$, proving the proposition in this case.
Therefore take $\lambda < \frac12$.
Let $\eta \in (0, \frac{\log 2}{\log \lambda} \log \frac{\lambda}{1-\lambda} )$
be small and $\gamma \in (\frac12, 1-\lambda)$.
Since $\gamma$ corresponds to dissipative behaviour,
the Lebesgue measure of $\Omega_\gamma'$ is positive,
so its Hausdorff dimension is $1$.
Recall from \eqref{eq:drift} that the drift
$\drift(\gamma) = \frac{2\gamma-1}{1-\gamma}$, which tends to $0$ as $\gamma \to \frac12$.
Recall that $\chi_n(x) = k$ if $F_\lambda^n(x) \in W_k$.
Measured w.r.t.\ Lebesgue measure, the vast majority of points
satisfy $\chi_n(x) \sim n \drift(\gamma)$, so that in fact the set of points
$\Omega''_{\gamma} = \{ x \in \Omega'_\gamma : \chi_n(x) \le 10\drift(\gamma)n \text{ for all $n$ sufficiently large}\}$ has positive Lebesgue measure.

Note that the sets $\Omega$, $\Omega'$ and $\Omega''$ depend on the parameter
$\lambda$ (or $\gamma$), but the codes, or equivalently the
sequences $\chi_n(x)$ for $x$ in these sets are independent of the parameter.

Since $\dim_H(\Omega''_\gamma) = 1$, for each $u < 1$,
we can find a mesh $\eps$ covers ${\mathcal U}_\eps$ of $\Omega_\gamma$
using cylinder sets
of variable lengths, such that $\sum_{C \in {\mathcal U}_\eps} |C|^u$
diverges as $\eps \to 0$.

The idea for $\lambda < \frac12$ is now to use a cover of cylinders that for
$\gamma$ represents positive measure, and hence
finite $u$-dimensional Hausdorff mass for all $u < 1$.
Choose $t =  \frac{ u \log[\gamma(1-\gamma)] + \eta \log \lambda}{ \log[\lambda(1-\lambda]} \in (0,1)$, so
$$
[\lambda(1-\lambda)]^t = [\gamma(1-\gamma)]^u e^{u\eta}.
$$
Because
$\eta < \frac{\log 2}{\log \lambda} \log \frac{\lambda}{1-\lambda}$
we obtain for
and $\gamma$ sufficiently close to $\frac12$ that
$$
\log \frac{ \lambda^t }{ \gamma^u } = t \log \lambda - u \log \gamma =
u\ \frac{\log (1-\gamma)\log \lambda -  \log\gamma \log(1-\lambda) + \eta }{ \log[\lambda(1-\lambda)] } < 0,
$$
so $\lambda^t < \gamma^u$.
We use the same covers ${\mathcal U}_{\eps}$ and note that
$t$-conformal length of cylinders $C$ for parameter $\lambda$
coincides with the $u$-conformal length of $C$
for parameter $\gamma$.
Hence, indicating the parameter used in computing the length
of intervals by a subscript, we get by \eqref{eq:w}
that for an $n$-cylinder $C$ such that $F_\lambda^{n-1}(C) = W_{\chi_n(C)}$:
$$
|C|_\lambda^t = [\lambda(1-\lambda)]^{t(n-1)} m_t(W_{\chi_n(C)})
= [\lambda(1-\lambda)]^{tn} \lambda^{t(\chi_n(C)-2)}
$$
and similar for $|C|_\gamma^u$.
Summing over all such cylinders, we get
$$
\sum_{C \in {\mathcal U}_\eps} |C|_\lambda^t =
\sum_{C \in {\mathcal U}_\eps} |C|_\gamma^u\ e^{u\eta n(C)}\ \left( \frac{\lambda^t}{\gamma^u} \right)^{\chi_n(C)-2}.
$$
By definition of $\Omega''_\gamma$, we have $\chi_n(C) \le 10 \drift(\gamma)n$
and thus $\left( \frac{ \lambda^t }{ \gamma^u } \right)^{\chi_n(C)} \ge e^{-u \eta n}$
provided $\gamma$ is sufficiently close to $\frac12$.
Therefore the right hand side in the formula diverges as
 $\gamma \searrow \frac12$ and $\eps \to 0$.
 Furthermore, the mesh size of ${\mathcal U}_\eps$
is different for parameter $\lambda$ and $\gamma$, but they tend to $0$
for both parameters simultaneously as $\eps \to 0$.
Therefore $t \le \dim_H(\Omega_\lambda)$.
Taking the limits
$\gamma \searrow \frac12$, $u \nearrow 1$ and $\eta \to 0$, we obtain the required
lower bound
$\dim_H( \Omega_{\lambda}) \ge
\frac{- \log 4}{\log\lambda(1-\lambda)}$.

For the upper bound, we take $u > 1$ and $\eta = 0$.
Then the $u$-dimensional Hausdorff mass of
$\eps$-covers ${\mathcal U}_\eps$ converges for parameter $\gamma = \frac12$.
So now, taking $\eta = 0$ and the limit $u \searrow 1$,
gives the same upper bound $\dim_H(\Omega_{\lambda}) \le
\frac{- \log 4}{ \log\lambda(1-\lambda)}$.
This completes the proof for $\lambda < \frac12$.
Finally, monotonicity of $\lambda \mapsto \dim_H(\Omega_\lambda)$
gives $\dim_H(\Omega_\lambda) = 1$ for $\lambda = \frac12$.
\end{proof}

\begin{remark}
Notice the striking symmetry:
$\dimhyp(F_{\lambda}) = \dim_H(\Omega_{1-\lambda})$.
We can explain this using an argument from \cite{StraVogt}, namely
a coding of the system $((0,1], F_\lambda)$ based on ``$\lambda$-adic''
partitions that are defined inductively by starting with $[0,1]$, and
dividing all intervals of the previous stage into two parts of relative lengths
$1-\lambda$ (with symbol $0$) and $\lambda$ (with symbol $1$).
This means that Lebesgue measure on $[0,1]$ corresponds to $(1-\lambda, \lambda)$
Bernoulli measure on $\{ 0, 1\}^{\N_0}$.

As a result, if $x$ has code
$0^{n_0}10^{n_1}10^{n_2}10^{n_3}1\dots$ then
$x \in W_{n_1}$,
$$
F_\lambda(x) \in \left\{ \begin{array}{ll}
W_{n_0+n_1-1} & \text{ if } x \notin W_1, \\
W_{n_1}  & \text{ if } x \in W_1.
\end{array} \right.
$$
and in general,
$$
F_\lambda^k(x) \in \left\{ \begin{array}{ll}
W_{j+n_k-1} & \text{ if } F_\lambda^{k-1}(x) \in W_j, j \ge 2 \\[2mm]
W_{n_k}  & \text{ if } F_\lambda^{k-1}(x) \in W_1.
\end{array} \right.
$$
Let $Y$ be the set of point such that $0s$ dominate in their codes,
\ie
$$
Y = \left\{ y \in [0, 1] : \lim_{r \to \infty} \sum_{j=0}^{r-1} n_j(y) - r \to \infty\right\}.
$$
These are precisely the points such that $F_\lambda^r(y) \to 0$, and in fact
$F_\lambda^r(y) \in \cup_{i \ge j} W_i$
if $j = \sum_{j=0}^{r-1} n_j - r $ with equality if $F_\lambda^j(y)$ does not linger
in $W_0$ for successive iterates $j \le r$.
Stratmann \& Vogt show that $\dim_H(Y)$ is given by \eqref{eq:dimH_Omega}.

Let us now form $\hat Y$ as the set of points $\hat y$ with opposite
codes as $Y$, \ie $\hat y$ is the point obtained by changing all $0$s
in the code of $y$ by $1$s and vice versa, and let us also
change $\lambda$ to $\hat \lambda = 1-\lambda$.
Then $\dim_H(\hat Y_{\hat \lambda}) = \dim_H(Y_\lambda)$.
But $\hat Y$ are points in whose code $1$s dominate, so
their orbits visit only finitely many intervals $W_j$,
and hence $\dim(\hat Y_{\hat \lambda}) = \dimhyp(F_{\hat \lambda})$,
which explains the symmetry.

The only exception for this argument
are point $\hat y$ that remain in $W_0$ for a long time $n_k$
(a block of $n_k$ ones in the code) and then visit $W_{j_k}$
for $1 \ll j_k \ll n_k$ (a block of $j_k$ zeroes in the code).
The regularity of such codes makes is plausible that the Hausdorff dimension
of such points is small and hence has no effect on the equality
 $\dim(\hat Y_{\hat \lambda}) = \dimhyp(F_{\hat \lambda})$.
\end{remark}

\section{Topological and Gurevich Pressure}
\label{sec:pressure}

In this section we present the classical definition (see
\cite{Ru_press, Walt_press, Waltbook})
of topological pressure along with a Gurevich definition of pressure
for countable Markov graphs
which allows us to prove Theorem~\ref{thm:main pressure formula} and
Corollary~\ref{cor:main all press}.
The results here also set the stage for the proof of the null
recurrent case in Section~\ref{sec:null rec}.

Let $f:X \to X$ be a continuous map on a metric space.
Following Bowen \cite{Bo_balls}, let
$$
d_n(x, y):=\max \{d(f^k(x), f^k(y)):0 \le k < n\}
$$
Given $\eps>0$ we say that a finite set $E\subset X$ is \emph{$(n, \eps)$-separated} if $d_n(x, y)>\eps$ for every $x,y\in E$ such that $x\neq y$.
Bowen showed that topological entropy coincides with
the exponential growth rate in $n$ of the
maximal cardinality of $(n, \eps)$-separated sets, but in order to
obtain pressure, one needs to compute ergodic sums of
of the potential on each point in an $(n, \eps)$-separated set.
Let $E_{n,\eps}$ be the collection of all $(n, \eps)$-separated sets.
Define
\begin{equation}\label{eq:Gamma}
\Gamma_{n,\eps}(\phi):=\sup_{E_{n,\eps}}\sum_{x\in E_{n, \eps}} e^{S_n\phi(x)},
\end{equation}
where $S_n\phi(x):=\phi(x)+\cdots +\phi\circ\sigma^{n-1}(x)$.
The classical topological pressure introduced in \cite{Ru_press, Walt_press} is
$$
\ptop(\phi):=\lim_{\eps\to 0}\limsup_{n\to\infty}\frac1n \log \Gamma_{n,\eps}.
$$
Of course, our maps $F_\lambda:(0, 1] \to (0,1]$ are not continuous
as interval maps.  However, we can still compute topological pressure for them.

\begin{remark}
In the compact setting,
since all metrics generating the same topology are uniformly equivalent
($d_1$ and $d_2$ are called {\em uniformly equivalent} if the
identity maps from $(X,d_1)$ to $(X, d_2)$ and vice versa are both uniformly
continuous), the value of the pressure does not depend upon the metric (see \cite[Section 7.2]{Waltbook}). However, in non-compact settings this may no longer be the case.  This is one of the motivations for the alternative notion of pressure given in the next subsection, which in our situation is shown to agree with $\ptop(\phi)$.
\end{remark}

Since $F_\lambda$ preserves
the countable Markov partition $\{W_k\}_{k \in \N}$ it is natural to
use a countable Markov shift (CMS) on alphabet $\N$.
By the definition of $W_n$ as half-open intervals, every point
(rather than almost every) has a well-defined symbolic itinerary,
no information is lost when passing from the interval to symbolic
dynamics.
With the theory we present here we can interpret some of the results proved above about eigenvalues of matrices in terms of the pressure.

Let $\sigma \colon \Sigma \to \Sigma$ be a one-sided Markov shift
with a countable alphabet $\N$. That is, there exists a matrix
$(t_{ij})_{\N \times \N}$ of zeros and ones (with no row and no column
made entirely of zeros) such that
\[
\Sigma=\{ x\in \N^{\N_0} : t_{x_{i} x_{i+1}}=1 \ \text{for every $i
\in \N_0$}\},
\]
and the shift map is defined by $\sigma(x_0x_1 \cdots)=(x_1 x_2
\cdots)$. We say that $(\Sigma,\sigma)$ is a \emph{countable
Markov shift}.
We equip $\Sigma$ with the topology generated by the cylinder sets
$$
[e_0 \cdots e_{n-1}] = \{x \in \Sigma : x_j = e_j \text{ for } 0 \le j < n \}.
$$
By making the move from the interval to the coding space $\Sigma$ we lose
connectedness, but gain smoothness for our potentials.

Given a function
$\phi\colon \Sigma \to\R$, for each $n \ge 1$ we define the \emph{variation}
on $n$-cylinders
\[
V_{n}(\phi) = \sup \left\{|\phi(x)-\phi(y)| : x,y \in \Sigma,\
x_{i}=y_{i} \text{ for } 0 \le i < n \right\}.
\]
We say that $\phi$ has \emph{summable variations} if
$\sum_{n=2}^{\infty} V_n(\phi)<\infty$.  We will sometimes refer to $\sum_{n=2}^{\infty} V_n(\phi)$ as the \emph{distortion bound} for $\phi$.  Clearly, if $\phi$ has
summable variations then it is continuous.   We say that $\phi$ is \emph{weakly H\"older continuous} if $V_n(\phi)$ decays exponentially.  If this is the case then it has summable variations. In what follows we assume
$(\Sigma, \sigma)$ to be topologically mixing (see \cite[Section 2]{Sartherm} for a precise definition).

Based on work of Gurevich \cite{Gutopent, Gushiftent}, Sarig \cite{Sartherm} introduced a notion of pressure for countable Markov shifts which does not depend upon the metric of the space and which satisfies a Variational Principle.
Let $(\Sigma, \sigma)$ be a topologically mixing countable Markov shift, fix a symbol $e_0$ in the alphabet $S$ and
let $\phi \colon \Sigma \to \R$ be a potential of summable variations.  
We let the {\em local partition function at $[e_0]$} be 
\begin{equation}
Z_n(\phi, [e_0]):=\sum_{x:\sigma^{n}x=x} e^{S_n\phi(x)} \chi_{[e_{0}]}(x)
\label{eq:Zn}
\end{equation}
and
$$
Z_n^*(\phi, [e_0]) := \sum_{\stackrel{x:\sigma^{n}x=x,}{x:\sigma^{k}x \notin [e_{0}] \ \mbox{\tiny for}\ 0< k < n}} \hspace{-10mm} e^{S_n\phi(x)} \chi_{[e_{0}]}(x),
$$
where $\chi_{[e_{0}]}$ is the characteristic function of the
$1$-cylinder $[e_{0}] \subset \Sigma$,
and $S_n\phi(x)$ is $\phi(x) + \dots + \phi \circ \sigma^{n-1}(x)$.
The so-called \emph{Gurevich pressure} of $\phi$ is defined
by
the exponential growth rate
\[
 P_G(\phi) := \lim_{n \to
\infty} \frac{1}{n} \log Z_n(\phi, [e_0]).
\]
Since $\sigma$ is topologically mixing, one can show that $P_G(\phi)$ does not depend on $e_0$.  If $(\Sigma, \sigma)$ is the full-shift on a countable alphabet then the Gurevich pressure coincides with the notion of pressure introduced by Mauldin \& Urba\'nski \cite{MUifs}.

We defined transience/recurrence of a system in the introduction in terms of the relevant measures there. In the CMS context, as proved in \cite{Sarnull}, these are equivalent to the following definitions.
The potential $\phi$ is said to be {\em recurrent} if\footnote{The convergence of this series is independent of the cylinder set $[e_0]$,
so we suppress it in the notation.}
\begin{equation}\label{eq:recurrence}
\sum_n e^{-nP_G(\phi)} Z_n(\phi) = \infty.
\end{equation}
Otherwise $\phi$ is \emph{transient}.
Moreover, $\phi$ is called \emph{positive recurrent} if it is
recurrent and $$\sum_n ne^{-nP_G(\phi)}Z^*_n(\phi) < \infty.$$

The following can be shown using the proof of \cite[Theorem 3]{Sartherm}.

\begin{proposition}[Variational Principle] \label{prop:VarPri}
If $(\Sigma, \sigma)$ is topologically mixing and  $\phi: \Sigma \to \mathbb{R}$ has summable variations, $\phi<\infty$ and $\phi$ is weakly H\"older continuous, then
\begin{equation*}
P_G(\phi)= P(\phi). 
\end{equation*}
\end{proposition}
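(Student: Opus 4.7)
The plan is to establish $P(\phi)\leq P_G(\phi)$ and $P(\phi)\geq P_G(\phi)$ separately, adapting Sarig's approach in \cite{Sartherm}. The hypotheses (topological mixing, summable variations, $\phi$ bounded above, weak H\"older continuity) combine to yield the basic analytic tool: a uniform Gibbs-type distortion bound on $n$-cylinders, namely $|S_n\phi(x)-S_n\phi(y)|\leq V:=\sum_{k=2}^\infty V_k(\phi)<\infty$ whenever $x$ and $y$ agree in the first $n$ coordinates. This bound turns the local partition functions $Z_n(\phi,[e_0])$ into good proxies for weighted cylinder counts and for measures of $n$-cylinders under any would-be Gibbs state.

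For the upper bound $P(\phi)\leq P_G(\phi)$, we would reduce by the affine structure of $\mu\mapsto h(\mu)+\int\phi\,d\mu$ to ergodic $\mu\in\M$ with $\int\phi\,d\mu>-\infty$ and $h(\mu)<\infty$. By ergodicity, choose a symbol $e_0$ with $\mu([e_0])>0$, let $\tau:[e_0]\to\N$ be the first-return time, and let $T:=\sigma^\tau$ be the induced map, which is topologically conjugate to a full shift on a countable alphabet. Writing $\Phi:=S_\tau\phi$ and $\bar\mu:=\mu|_{[e_0]}/\mu([e_0])$, the Abramov--Kac formulas give
\[
h(\mu)+\int\phi\,d\mu=\mu([e_0])\left(h_{\bar\mu}(T)+\int\Phi\,d\bar\mu\right),
\]
while the same inducing applied to Gurevich pressure shows that the induced potential $\Phi - P_G(\phi)\tau$ has zero Gurevich pressure on the induced full shift. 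Since the variational principle on a full countable Markov shift is the classical Mauldin--Urba\'nski result \cite{MUifs}, one obtains $h_{\bar\mu}(T)+\int(\Phi-P_G(\phi)\tau)\,d\bar\mu\leq 0$, and using Kac's identity $\mu([e_0])\int\tau\,d\bar\mu=1$ this rearranges to $h(\mu)+\int\phi\,d\mu\leq P_G(\phi)$.

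For the lower bound $P(\phi)\geq P_G(\phi)$, we would build invariant probability measures realising $P_G(\phi)$ in the limit. Weight each $n$-periodic point $x\in[e_0]$ by $e^{S_n\phi(x)}/Z_n(\phi,[e_0])$ to form a probability measure $\mu_n$ supported on such periodic orbits, and symmetrise to obtain the $\sigma$-invariant probability $\nu_n:=\frac{1}{n}\sum_{k=0}^{n-1}\sigma^k_*\mu_n$. The distortion bound $V$ combined with topological mixing forces $\nu_n([e_0])$ to be bounded below uniformly in $n$ and makes the family $\{\nu_n\}$ tight, so a subsequential weak-$*$ limit $\nu_\infty\in\M$ exists. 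A Misiurewicz-style decomposition of the orbit into blocks, together with concavity of the entropy functional, then yields $h(\nu_\infty)+\int\phi\,d\nu_\infty\geq\limsup_n\frac{1}{n}\log Z_n(\phi,[e_0])=P_G(\phi)$.

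The principal obstacle is this lower bound: because $\Sigma$ is non-compact, both tightness of $\{\nu_n\}$ and lower semicontinuity of $\mu\mapsto h(\mu)+\int\phi\,d\mu$ at $\nu_\infty$ must be argued carefully, and both depend crucially on the summable-variations hypothesis and on $\phi<\infty$ (so that $\phi$ remains controlled on far tails of $\Sigma$). Following \cite{Sartherm}, one handles these issues by truncating the alphabet to a finite set $\{1,\ldots,K\}$, estimating the escape of mass in the complement via the distortion bound, and passing to the limit in $K$ after the weak-$*$ limit in $n$.
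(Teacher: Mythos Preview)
The paper does not prove this proposition at all; it merely asserts that the result ``can be shown using the proof of \cite[Theorem 3]{Sartherm}'' and moves on. Your proposal is therefore a reconstruction of Sarig's argument rather than a comparison target, and as such it is broadly reasonable, but the two halves are miscalibrated in difficulty relative to Sarig's actual proof.

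For the inequality $P_G(\phi)\le P(\phi)$, your periodic-orbit-measure construction with a Misiurewicz decomposition, tightness, and semicontinuity is much more delicate than necessary. This direction follows almost immediately from the Approximation Property (Proposition~\ref{prop:approx} here, \cite[Theorem 2/Corollary 1]{Sartherm}): on each finite-alphabet compact invariant subshift $K$ the classical variational principle supplies invariant measures realising $P_{top|K}(\phi)$ arbitrarily well, and taking the supremum over $K$ gives $P_G(\phi)\le P(\phi)$ with no weak-$*$ limits, tightness, or semicontinuity issues whatsoever.

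For the harder inequality $P(\phi)\le P_G(\phi)$, your inducing route is a legitimate strategy that appears in later literature, but the step asserting that the induced potential $\Phi-P_G(\phi)\tau$ has zero Gurevich pressure on the induced full shift is not free: that identity is essentially Sarig's discriminant relation (\cite{Sarphase}) and itself rests on comparing the partition functions you are trying to control, so invoking it here is close to circular unless you supply an independent proof. Sarig's original argument in \cite{Sartherm} avoids inducing: for ergodic $\mu$ with $\mu([e_0])>0$ he uses a Shannon--McMillan--Breiman type counting estimate, bounding the number of $n$-cylinders in $[e_0]$ carrying near-typical $\mu$-measure and controlling $S_n\phi$ on each via the distortion constant $V=\sum_{k\ge 2}V_k(\phi)$, then comparing directly with $Z_n(\phi,[e_0])$ to obtain $h(\mu)+\int\phi\,d\mu\le P_G(\phi)$.
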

Let us stress that $P(\phi)$  only
depends on the Borel structure of the space and not on the
metric . Therefore, $P_G(\phi)$ must also be independent of the metric on the space.

The Gurevich pressure also has the property that it can be approximated by its restriction to compact sets. More precisely
\cite[Corollary 1]{Sartherm}:

\begin{proposition}[Approximation Property] \label{prop:approx}
If $(\Sigma, \sigma)$ is topologically mixing and $\phi: \Sigma \to \mathbb{R}$ is weakly H\"older continuous then
\begin{equation*}
P_G( \phi) = \sup \{ P_{top|K}( \phi) : \emptyset \ne K \subset \Sigma, K \text{ is compact and shift-invariant}  \},
\end{equation*}
where $P_{top|K}( \phi)$ is the topological pressure on $K$.
\end{proposition}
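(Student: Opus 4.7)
The plan is to establish the two inequalities separately. The direction $\sup_K P_{top|K}(\phi)\le P_G(\phi)$ is the easier one, obtained from monotonicity of local partition functions; the reverse inequality requires approximating the Gurevich pressure by pressures on finite-alphabet subsystems.

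For the easy direction, let $K\subset\Sigma$ be nonempty, compact and shift-invariant. Because $\Sigma\subset\N^{\N_0}$ carries the product topology, compactness of $K$ forces the set of symbols appearing in coordinate $n$ to be finite for each $n$, and shift-invariance makes this set independent of $n$. Hence $K\subseteq\Sigma_F$ for some finite $F\subset\N$, where $\Sigma_F:=\{x\in\Sigma:x_i\in F\text{ for all }i\}$ is a compact subshift on finitely many symbols. Any invariant probability measure supported on $K$ extends to an invariant probability measure on $\Sigma$, so combining the classical Variational Principle on $\Sigma_F$ with Proposition~\ref{prop:VarPri} yields $P_{top|K}(\phi)\le P_{top|\Sigma_F}(\phi)\le P(\phi)=P_G(\phi)$.

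For the reverse direction, fix a reference state $e_0$ and an exhausting chain of finite sets $F_1\subset F_2\subset\cdots$ with $e_0\in F_1$ and $\bigcup_k F_k=\N$. For each fixed $n$, any periodic point of period $n$ through $[e_0]$ uses only finitely many symbols, so the pointwise monotone convergence $Z_n(\phi|_{\Sigma_{F_k}},[e_0])\uparrow Z_n(\phi,[e_0])$ holds as $k\to\infty$. The obstacle is that this is only pointwise-in-$n$ convergence, while the definition of $P_G$ requires an exponential growth rate. To bridge this gap I would pass to the first-return partition functions $Z_n^*(\phi,[e_0])$: standard renewal-type arguments give $P_G(\phi)=-\log R$ where $R$ is the radius of convergence of $\sum_n Z_n^*(\phi,[e_0])z^n$, and the same identity holds inside each $\Sigma_{F_k}$. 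Since $Z_n^*(\phi|_{\Sigma_{F_k}},[e_0])\uparrow Z_n^*(\phi,[e_0])$ termwise in $n$, the corresponding radii of convergence $R_{F_k}$ decrease to $R$, giving $P_G(\phi|_{\Sigma_{F_k}})\uparrow P_G(\phi)$. Since $\Sigma_{F_k}$ is compact, Proposition~\ref{prop:VarPri} identifies $P_G(\phi|_{\Sigma_{F_k}})$ with $P_{top|\Sigma_{F_k}}(\phi)$, and taking the supremum over $k$ closes the loop.

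The main obstacle is the step from pointwise-in-$n$ convergence of partition sums to convergence of exponential rates; this is exactly where topological mixing of $(\Sigma,\sigma)$ is used. Mixing guarantees that once $F_k$ contains sufficiently many symbols, the induced subshift on $\Sigma_{F_k}$ supports enough first-return loops through $[e_0]$ to realise the asymptotic growth rate $-\log R$, so the finite-alphabet truncation loses no exponential weight in the limit $k\to\infty$. Without mixing the inclusion $\Sigma_{F_k}\hookrightarrow\Sigma$ could fail to preserve the renewal structure, and one would only obtain an inequality rather than equality.
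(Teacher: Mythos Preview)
The paper does not prove this proposition: it is quoted verbatim as \cite[Corollary~1]{Sartherm} and simply cited. So there is no ``paper's own proof'' to compare against; your sketch is being measured against Sarig's argument.

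Your overall two-inequality strategy is the right one, and the easy direction is fine. The hard direction, however, has a genuine gap in the renewal step. The claim that $P_G(\phi)=-\log R$ with $R$ the radius of convergence of $\sum_n Z_n^*(\phi,[e_0])z^n$ is false in general: for a recurrent potential the singularity of $\sum_n Z_n z^n$ at $e^{-P_G(\phi)}$ comes from the vanishing of $1-\sum_n Z_n^* z^n$, not from the boundary of convergence of $\sum_n Z_n^* z^n$, and the latter radius can be strictly larger. (Take the full two-shift with $\phi\equiv 0$: then $Z_n^*([0])=1$ for all $n$, so the first-return series has radius $1$, while $e^{-P_G}=1/2$.) A second issue: even if the radii were the right objects, termwise monotone convergence $Z_n^{*,(k)}\uparrow Z_n^*$ does not force $R_{F_k}\downarrow R$, since $\limsup_n$ and $\lim_k$ do not commute without extra structure.

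The clean way through --- and essentially what Sarig does --- is to bypass $Z_n^*$ and exploit supermultiplicativity of $Z_n(\phi,[e_0])$ itself. Concatenating a period-$n$ loop through $[e_0]$ with a period-$m$ loop through $[e_0]$ gives a period-$(n+m)$ loop, so bounded distortion yields $Z_n Z_m\le C\,Z_{n+m}$ with a constant $C$ depending only on the variation sums of $\phi$. Fekete's lemma then gives $P_G(\phi)=\sup_n \tfrac1n\log(Z_n/C)$, and the \emph{same} constant $C$ works inside each $\Sigma_{F_k}$. Now the pressure is a supremum in $n$, and interchanging $\sup_k\sup_n=\sup_n\sup_k$ together with the pointwise convergence $Z_n^{(k)}\uparrow Z_n$ closes the argument. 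Topological mixing is used only to guarantee that the truncated subshifts $\Sigma_{F_k}$ can be chosen mixing (so that $P_G$ on them equals $P_{top}$), which is a mild combinatorial point.
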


At this point we can prove that for our systems, $\ptop$ and $P_G$ coincide.  This result is similar to \cite[Proposition 1.3]{Salama}

\begin{proposition}
For $\lambda\in (0, 1)$ and $t\ge 0$,
$\ptop(-t\log|F_\lambda'|)=P_G(-t\log|F_\lambda'|)$.
\label{prop:class eq gur}
\end{proposition}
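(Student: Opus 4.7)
We prove both inequalities $\ptop(\Phi_t)\ge P_G(\Phi_t)$ and $\ptop(\Phi_t)\le P_G(\Phi_t)$, writing $\Phi_t=-t\log|F_\lambda'|$ for brevity.

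The inequality $\ptop(\Phi_t)\ge P_G(\Phi_t)$ follows from the Approximation Property (Proposition~\ref{prop:approx}): $P_G(\Phi_t)=\sup_K P_{top|K}(\Phi_t)$ over compact, shift-invariant $K\subset\Sigma$. Each such $K$ uses only finitely many symbols, and the Markov coding sends it to a compact $F_\lambda$-invariant subset $X_K\subset(0,1]$ bounded away from~$0$. The coding restricted to $X_K$ is a topological conjugacy modulo a countable set of endpoint identifications that does not affect $(n,\eps)$-separation; since $\Phi_t$ is locally constant on $1$-cylinders, this is enough to identify the symbolic and interval classical topological pressures, $P_{top|K}(\Phi_t)=\ptop(F_\lambda|_{X_K},\Phi_t)$. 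Because any $(n,\eps)$-separated set in $X_K\subset(0,1]$ is also $(n,\eps)$-separated in the ambient $(0,1]$, we deduce $\ptop(F_\lambda,\Phi_t)\ge\ptop(F_\lambda|_{X_K},\Phi_t)=P_{top|K}(\Phi_t)$, and taking the supremum yields the claim.

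For the reverse inequality $\ptop(\Phi_t)\le P_G(\Phi_t)$, we bound $\Gamma_{n,\eps}(\Phi_t)$ by an $n$-cylinder partition sum. Two structural observations: (i)~since $F_\lambda$ is piecewise linear, $\Phi_t$ is constant on each $W_e$, hence $S_n\Phi_t$ is a single number on each $n$-cylinder $[e_0,\dots,e_{n-1}]$; (ii)~since $F_\lambda$ is strictly expanding (derivative $\ge 1/(1-\lambda)>1$) and $F_\lambda^{n-1}$ maps each $n$-cylinder monotonically onto $W_{e_{n-1}}$, the Bowen $d_n$-diameter of the cylinder equals $\diam(W_{e_{n-1}})\le 1-\lambda$, uniformly in $n$. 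Consequently any $(n,\eps)$-separated set $E$ contains at most $C(\eps):=\lceil(1-\lambda)/\eps\rceil+1$ points per $n$-cylinder, giving
\[
\Gamma_{n,\eps}(\Phi_t)\le C(\eps)\sum_{[e_0\cdots e_{n-1}]\text{ met by }E}e^{S_n\Phi_t}.
\]
For each truncation level $K$, the partial sum over cylinders with all $e_i\le K$ grows at rate $P_{top|\Sigma_K}(\Phi_t)\le P_G(\Phi_t)$ by Proposition~\ref{prop:approx}. For the tail, equation~\eqref{eq:dimcover} identifies the full sum with $\underline 1\cdot (B^t)^{n-1}\cdot\underline w_t^T$, which grows at rate $\log x(t)$ by Theorem~\ref{thm:B ev}; moreover $\log x(t)=\lim_{K\to\infty}P_{top|\Sigma_K}(\Phi_t)\le P_G(\Phi_t)$ via the first direction applied to the finite subshifts $\Sigma_K$. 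Letting $K\to\infty$ then $n\to\infty$ produces the required upper bound.

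The main obstacle is the tail control of the cylinder partition sum: because the weights $e^{\Phi_t(W_e)}=[\lambda(1-\lambda)]^t$ are independent of $e\ge 2$ and so do not decay at infinity, one cannot dispose of cylinders visiting large states by crude summability arguments. The resolution hinges on the explicit spectral radius computation of Theorem~\ref{thm:B ev}, which packages the entire infinite-alphabet cylinder partition function into the single number $\log x(t)$ and connects it back to the Gurevich pressure through approximation by finite subshifts.
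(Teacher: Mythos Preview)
Your lower bound $\ptop(\Phi_t)\ge P_G(\Phi_t)$ via Proposition~\ref{prop:approx} is fine and matches the paper's use of approximation by compact subsystems.

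The upper bound, however, has a genuine gap. You bound $\Gamma_{n,\eps}(\Phi_t)$ by $C(\eps)$ times a cylinder partition sum and then invoke \eqref{eq:dimcover} and Theorem~\ref{thm:B ev} to control it. But \eqref{eq:dimcover} computes $\sum_{[e_0\cdots e_{n-1}]}|[e_0\cdots e_{n-1}]|^t$, which equals $\sum e^{S_{n-1}\Phi_t}\,|W_{e_{n-1}}|^t$, \emph{not} $\sum e^{S_n\Phi_t}$. The discrepancy factor $|W_{e_{n-1}}|^t/e^{\Phi_t([e_{n-1}])}=\lambda^{t(e_{n-1}-2)}$ (for $e_{n-1}\ge 2$) is exactly what makes the first sum converge. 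By contrast the second sum is infinite for every $n$ and every $t\ge 0$: already $\sum_{k\ge 2}e^{\Phi_t([k])}=\sum_{k\ge 2}[\lambda(1-\lambda)]^t=\infty$, since the weight $e^{\Phi_t}$ does not depend on the state $k\ge 2$. Hence your inequality $\Gamma_{n,\eps}\le C(\eps)\sum e^{S_n\Phi_t}$ is vacuous, and no amount of truncation-then-limit in $K$ recovers a finite bound.

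What is missing is precisely the mechanism that makes $\Gamma_{n,\eps}$ finite in the first place: an $(n,\eps)$-separated set cannot distinguish the infinitely many deep states, because $\bigcup_{k\ge n(\eps)}W_k$ has Euclidean diameter $<\eps$ and $F$ moves points in this tail down by at most one state per iterate. The paper exploits this directly: setting $K_\eps=\bigcup_{k<n(\eps)}W_k$, it argues that at each time step only one ``new'' separated point can emerge from $I\setminus K_\eps$ into $K_\eps$, and that each such contribution can be paired with one already present in $K_\eps$; this yields the elementary combinatorial bound $\Gamma_{n,\eps}\le n\,\Gamma_{n,\eps}|_{K_\eps}$. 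The compact-set pressure $\ptop|_{K_\eps}$ then equals the Gurevich pressure on the corresponding finite subshift, and one concludes via Proposition~\ref{prop:approx}. No appeal to Theorem~\ref{thm:B ev} or to the full-alphabet partition function is needed.
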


\begin{proof}
For a subset $K\subset X$, let $\Gamma_{n, \eps}(\phi)|_K$ be the above
quantity restricted to the set $K$.
We first claim that for our map $F$ and for each $\eps>0$,
there exists $K_\eps\subset I$ such that
\begin{equation}\label{eq:Gamma ineq}
\Gamma_{n,\eps}\le n\Gamma_{n,\eps}|_{K_\eps}.
\end{equation}
Indeed, for $\eps>0$ there exists a minimal $n(\eps)\ge 1$ such that
$\left|\cup_{n\ge n(\eps)}A_n\right|<\eps$.
Set $K_\eps:=I\sm\left(\cup_{n\ge n(\eps)}A_n\right)$.  Then by the structure
of $F$, there is just one element from $I\sm K_\eps$
entering $K_\eps$ at each successive iterate of $F$.  These new contributions
(which are initially of weight $(\lambda(1-\lambda))^{nt}$,
but eventually can be of the form $(1-\lambda)^{nt}\lambda^k$) can be
paired with a summand already in the sum for $\Gamma_n|_{K_\eps}$.
The number of these new terms generated up to time $n$ is $\le n$ so
\eqref{eq:Gamma ineq} is in fact a big over-estimate.

The Variational Principle for finite shifts on compact sets implies
$\ptop(-t\log|F'|_{K_\eps}|)=P_G(-t\log|F'|_{K_\eps}|)$.
Since \eqref{eq:Gamma ineq} implies
$$
\lim_{\eps\to 0}P_G(-t\log|F'|_{K_\eps}|)\to \ptop(-t\log|F'|),
$$
adding this to Proposition~\ref{prop:approx}
gives $\ptop(-t\log|F'|)=P_{G}(-t\log|F'|)$, as required.
\end{proof}

We use the standard {\em transfer operator}
$(L_\phi v)(x) = \sum_{\sigma y =x} e^{\phi(y)} v(y)$,
with dual operator $L^*_\phi$.
Notice that a measure $m$ is $\phi$-conformal if and only if
$L^*_\phi m = m$.

The following theorem is  \cite[Theorem 1]{Sarnull}.

\begin{theorem}
Suppose that $(\Sigma, \sigma)$ is topologically mixing, $\phi: \Sigma \to \mathbb{R}$ has summable variations and $P_G(\phi)<\infty$.  Then $\phi$ is recurrent if and only if there exists $\lambda>0$ and a conservative sigma-finite measure $m_\phi$ finite and positive on cylinders, and a positive continuous function $h_\phi$ such that $L_\phi^*m_\phi=\lambda m_\phi$ and $L_\phi h_\phi=\lambda h_\phi$.  In this case $\lambda=e^{P_G(\phi)}$.  Moreover,
\begin{enumerate}
\item if $\phi$ is positive recurrent then $\int h_\phi~dm_\phi<\infty$;
\item if $\phi$ is null recurrent then $\int h_\phi~dm_\phi=\infty$.
\end{enumerate}
\label{thm:RPF}
\end{theorem}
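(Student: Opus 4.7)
The plan is to treat the two directions of the biconditional separately after a preliminary normalization, then deduce the positive/null dichotomy from the construction. First, by replacing $\phi$ with $\phi - P_G(\phi)$, one reduces to the case $P_G(\phi) = 0$, in which one hopes for $\lambda = 1$. Fix a reference cylinder $[a] = [e_0]$ and write $Z_n = Z_n(\phi,[a])$ and $Z_n^* = Z_n^*(\phi,[a])$. The central tool throughout is the renewal identity
\[
\sum_{n \ge 1} Z_n z^n \;=\; \frac{\sum_{n \ge 1} Z_n^* z^n}{1 - \sum_{n \ge 1} Z_n^* z^n}
\]
on $|z| < e^{-P_G(\phi)}$, obtained by decomposing a closed loop in $[a]$ of length $n$ according to its successive returns to $[a]$.

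For the easy direction ($\Leftarrow$), suppose $L_\phi^* m_\phi = \lambda m_\phi$ and $L_\phi h_\phi = \lambda h_\phi$ with $m_\phi$ conservative, $m_\phi$-positive on cylinders, and $h_\phi$ positive continuous. Duality gives $\int L_\phi^n \1_{[a]}\, h_\phi \, dm_\phi = \lambda^n \int \1_{[a]} h_\phi\, dm_\phi$, while summable variations imply $L_\phi^n \1_{[a]}(x) \asymp e^{S_n\phi(y)}\cdot \1_{[a]}(\sigma^n y)$ uniformly over preimages $y$, and summing over periodic preimages recovers $Z_n$ up to bounded distortion. This forces $\lambda = e^{P_G(\phi)}$. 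Conservativity of $m_\phi$ together with the Halmos recurrence criterion (applied to the non-singular system $(\Sigma,\sigma,m_\phi)$) gives $\sum_n (\lambda^{-1} L_\phi)^n \1_{[a]} = \infty$ on a set of positive $m_\phi$-measure in $[a]$; integrating yields $\sum_n \lambda^{-n} Z_n = \infty$, which is recurrence.

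For the hard direction ($\Rightarrow$), recurrence means $\sum_n e^{-nP_G(\phi)} Z_n = \infty$, and the renewal identity then forces $F^*(e^{-P_G(\phi)}) := \sum_n e^{-nP_G(\phi)} Z_n^* = 1$. Now consider the induced system $(\bar\Sigma,\bar\sigma)$ of first returns to $[a]$, which is a full shift on a countable alphabet encoding the first-return loops. The induced potential $\bar\phi - \tau P_G(\phi)$ (where $\tau$ is the return time) has Gurevich pressure zero and partition sums equal to $F^*(e^{-P_G(\phi)}) = 1$, so classical Ruelle--Perron--Frobenius on the induced full shift (Mauldin--Urba\'nski style) produces a $(\bar\phi - \tau P_G(\phi))$-conformal probability measure $\bar m$ and a positive continuous eigenfunction $\bar h$ on $[a]$. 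Define $m_\phi$ on $\Sigma$ by pushing $\bar m$ forward along cylinders via $L_{\phi - P_G(\phi)}^*$ (i.e.\ assigning $m_\phi([e_0 \cdots e_{n-1}]) = e^{-nP_G(\phi)} e^{S_n \phi(x)} \bar m(\cdot)$ with the appropriate cocycle correction), and define $h_\phi$ by summing over incomplete returns, $h_\phi(x) = \sum_{n \ge 0} \sum_{\sigma^n y = x} e^{S_n\phi(y)-nP_G(\phi)} \bar h(y) \1_{\{y\text{ has no return to }[a]\text{ before }n\}}$. Summable variations of $\phi$ make these sums converge uniformly on cylinders and ensure continuity of $h_\phi$; the eigenvalue equations $L_\phi^* m_\phi = e^{P_G(\phi)} m_\phi$ and $L_\phi h_\phi = e^{P_G(\phi)} h_\phi$ then follow by direct computation, and conservativity of $m_\phi$ comes from the divergence of $\sum_n Z_n e^{-nP_G(\phi)}$ via Halmos's criterion.

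Finally, the invariant measure $d\mu_\phi = h_\phi \, dm_\phi$ satisfies Kac's formula $\mu_\phi(\Sigma) = \sum_n n \cdot (\text{mass of first-return loops of length }n)$, which up to a positive multiplicative constant equals $\sum_n n e^{-nP_G(\phi)} Z_n^*$. Hence $\int h_\phi \, dm_\phi < \infty$ exactly in the positive recurrent case, and $= \infty$ in the null recurrent case, proving (1) and (2). The main obstacle is the hard direction: securing continuity of $h_\phi$ and $m_\phi$-positivity on all cylinders simultaneously requires carefully exploiting summable variations to control the distortion of the Birkhoff sums $S_n\phi$ along infinite families of orbit segments connecting arbitrary cylinders to $[a]$, and this is where Sarig's generalized RPF machinery on the induced full shift does the heavy lifting.
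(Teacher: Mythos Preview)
The paper does not prove this theorem at all: it is stated there verbatim as a citation, introduced by ``The following theorem is \cite[Theorem 1]{Sarnull}.'' There is therefore no ``paper's own proof'' to compare your proposal against; the authors simply invoke Sarig's generalized Ruelle--Perron--Frobenius theorem as a black box.

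As for your outline itself, it is a reasonable sketch of the strategy in Sarig's original paper, with one caveat. Sarig does not actually pass through the induced full shift to build $m_\phi$ and $h_\phi$; he constructs them directly on $\Sigma$ as limits of normalized iterates of the transfer operator (for $h_\phi$) and via a Riesz-type argument on cylinders (for $m_\phi$), using summable variations to control distortion. Your inducing route is closer in spirit to the later approach of \cite{SarBIP} and \cite{Sarphase}, and while it can be made to work, the step where you ``push $\bar m$ forward along cylinders via $L_{\phi-P_G(\phi)}^*$'' and then sum over incomplete returns to define $h_\phi$ hides real work: one must check that the resulting $m_\phi$ is sigma-finite (not merely locally finite on return cylinders), that $h_\phi$ is finite and continuous on \emph{every} cylinder (not just those reachable in bounded time from $[a]$), and that the eigenvalue equations hold globally rather than just on the induced system. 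These are precisely the points where Sarig's direct construction, with its careful distortion estimates along arbitrary orbit segments, is cleaner than the inducing detour you propose.
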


Moreover the next theorem follows by \cite[Theorem 2]{Sarnull}:

\begin{theorem}
Suppose that $(\Sigma, \sigma)$ is topologically mixing and $\phi: \Sigma \to \mathbb{R}$ is weakly H\"older continuous and positive recurrent.  Then for the measure $d\mu=h_\phi dm_\phi$ given by Theorem~\ref{thm:RPF}, if $-\int\phi~d\mu$ is finite then $\mu$ is an equilibrium state for $\phi$.
\label{thm:eq}
\end{theorem}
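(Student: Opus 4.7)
The plan is to verify the equilibrium identity $h(\mu)+\int\phi\,d\mu=P(\phi)$ directly, by computing $h(\mu)$ via Rokhlin's formula on the one-sided generating Markov partition $\{[a]\}_{a\in\N}$ and then invoking Proposition~\ref{prop:VarPri}. The positive-recurrence hypothesis, together with Theorem~\ref{thm:RPF}(1), gives $\int h_\phi\,dm_\phi<\infty$, so we may normalise $\mu$ to a probability measure.

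First I would check that $\mu$ is $\sigma$-invariant. Using the identity $L_\phi\bigl(g\cdot(f\circ\sigma)\bigr)=f\cdot L_\phi g$, the eigenfunction equation $L_\phi h_\phi=e^{P_G(\phi)}h_\phi$, and the $(\phi-P_G(\phi))$-conformality of $m_\phi$, the chain
\[
\int f\circ\sigma\,d\mu=\int e^{-P_G(\phi)}L_\phi\bigl(h_\phi\cdot(f\circ\sigma)\bigr)\,dm_\phi=\int f\cdot e^{-P_G(\phi)}L_\phi h_\phi\,dm_\phi=\int f\,d\mu
\]
yields invariance for bounded Borel $f$. Next I would compute the Jacobian of $\sigma$ with respect to $\mu$. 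Since $\sigma$ is injective on each cylinder $[a]$, and conformality gives $m_\phi(\sigma A)=\int_A e^{P_G(\phi)-\phi}\,dm_\phi$, a change of variables produces
\[
J_\mu=\frac{h_\phi\circ\sigma}{h_\phi}\,e^{P_G(\phi)-\phi}.
\]
Rokhlin's entropy formula applied to $\{[a]\}_{a\in\N}$ then gives
\[
h(\mu)=\int\log J_\mu\,d\mu=P_G(\phi)-\int\phi\,d\mu+\int\bigl(\log h_\phi\circ\sigma-\log h_\phi\bigr)\,d\mu,
\]
and the last integral vanishes by $\sigma$-invariance of $\mu$. Together with Proposition~\ref{prop:VarPri} this delivers $h(\mu)+\int\phi\,d\mu=P_G(\phi)=P(\phi)$, which identifies $\mu$ as an equilibrium state.

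The main technical obstacle is making the cancellation $\int(\log h_\phi\circ\sigma-\log h_\phi)\,d\mu=0$ rigorous, since $\log h_\phi$ need not lie in $L^1(\mu)$ a priori. The standard remedy is to truncate to $\log h_\phi^{(N)}:=\max(-N,\min(N,\log h_\phi))$, apply $\sigma$-invariance to these bounded functions, and pass to the limit using that weakly H\"older potentials force $h_\phi$ to have controlled log-oscillation on cylinders, so that $|\log h_\phi\circ\sigma-\log h_\phi|$ is dominated by a function integrable against $\mu$. A secondary caveat is verifying Rokhlin's formula in the countable-alphabet setting, which demands finite static entropy of $\{[a]\}$ under $\mu$; this is guaranteed by the hypothesis $-\int\phi\,d\mu<\infty$ combined with the upper bound $h(\mu)\le P(\phi)-\int\phi\,d\mu$ supplied by the variational principle.
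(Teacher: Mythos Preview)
The paper does not actually prove this theorem: it is stated as a direct citation of \cite[Theorem~2]{Sarnull} (Sarig), with no argument supplied. So there is no ``paper's own proof'' to compare against beyond the invocation of Sarig's result.

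Your outline is essentially the standard route to this result (and close in spirit to Sarig's original argument): compute the Jacobian $J_\mu=(h_\phi\circ\sigma/h_\phi)\,e^{P_G(\phi)-\phi}$, apply Rokhlin's formula on the natural Markov partition, and cancel the coboundary. You have correctly flagged the two genuine technical points. The coboundary cancellation is the more delicate one; your truncation idea is on the right track, but the justification you sketch (``controlled log-oscillation on cylinders'') bounds $\log h_\phi(x)-\log h_\phi(y)$ for $x,y$ in the same cylinder, which is not the same as bounding $\log h_\phi(\sigma x)-\log h_\phi(x)$. In Sarig's treatment this is handled by working with the normalised potential $\bar\phi:=\phi+\log h_\phi-\log h_\phi\circ\sigma-P_G(\phi)$, for which $L_{\bar\phi}1=1$, and showing directly that $\int\bar\phi\,d\mu=-h(\mu)$; the integrability of $\bar\phi$ itself (rather than of $\log h_\phi$) is what one actually needs, and this follows from the summable-variations bounds on $h_\phi$ combined with $-\int\phi\,d\mu<\infty$. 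Your second caveat about Rokhlin's formula requiring $H_\mu(\{[a]\})<\infty$ is also real; note that the inequality $h(\mu)\le P(\phi)-\int\phi\,d\mu<\infty$ bounds the \emph{dynamical} entropy, and a small extra step is needed to get finiteness of the static partition entropy in the countable-alphabet setting.

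In short: your plan is sound and more informative than the paper's bare citation, but to make it airtight you should phrase the coboundary step in terms of integrability of $\bar\phi$ rather than of $\log h_\phi$ separately.
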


\begin{proposition}
Suppose that $(\Sigma, \sigma)$ is topologically mixing and $\phi: \Sigma \to \mathbb{R}$ is H\"older continuous, has finite Gurevich pressure and is transient or null recurrent.  Then there is no equilibrium state for $\phi$.
\label{prop:no eq}
\end{proposition}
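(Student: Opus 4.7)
The plan is to argue by contradiction. Suppose $\mu$ is an equilibrium state for $\phi$; since entropy and the integral of $\phi$ are affine under ergodic decomposition, I may assume $\mu$ is ergodic. Choose a symbol $a \in \N$ with $\mu([a])>0$, which exists by ergodicity. By Kac's lemma the first-return time $\tau$ to $[a]$ is $\mu$-a.e.\ finite with $\int \tau\,d\bar\mu = 1/\mu([a])$, where $\bar\mu := \mu|_{[a]}/\mu([a])$. The induced map $\bar\sigma : [a] \to [a]$ is topologically conjugate to a full shift on the countable alphabet of first-return loops at $a$, and the induced potential
\[
\bar\phi := S_\tau \phi - \tau\, P_G(\phi)
\]
inherits summable variations from $\phi$ by a standard telescoping estimate using the H\"older hypothesis on $\phi$.

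The next step is reduction to a full-shift problem. By Abramov's formula for both entropy and ergodic averages, $\bar\mu$ is $\bar\sigma$-invariant and satisfies
\[
h(\bar\mu) + \int \bar\phi\,d\bar\mu \ = \ \frac{1}{\mu([a])}\Bigl( h(\mu) + \int \phi\,d\mu - P_G(\phi) \Bigr) \ = \ 0,
\]
so $\bar\mu$ is an equilibrium state at Gurevich pressure zero for $\bar\phi$ on the induced full shift. On a full shift with a weakly H\"older potential, Mauldin-Urba\'nski theory produces a unique Gibbs equilibrium state of the form $\bar h\,d\bar m$ with $L_{\bar\phi}^*\bar m = \bar m$ and $L_{\bar\phi}\bar h = \bar h$, so $\bar\mu$ must equal this measure. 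Pulling this data back to $\Sigma$ via the standard tower construction produces an eigenmeasure $m_\phi$ and a positive continuous eigenfunction $h_\phi$ of $L_\phi$ at eigenvalue $e^{P_G(\phi)}$ with $d\mu \propto h_\phi\,dm_\phi$. By Theorem~\ref{thm:RPF} this already forces $\phi$ to be recurrent; and since $\mu(\Sigma) = 1$ forces $\int h_\phi\,dm_\phi < \infty$, $\phi$ is in fact positive recurrent, contradicting the hypothesis.

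The main obstacle is the middle step. Three things have to be verified with care: (i) that $\bar\phi$ is regular enough (summable variations, $\sup \bar\phi < \infty$) to sit in the Mauldin-Urba\'nski framework on the induced full shift, which uses both the H\"older regularity of $\phi$ and finiteness of $P_G(\phi)$; (ii) that Abramov's formulas are legitimate in the infinite-alphabet setting, which requires $\int \tau\,d\bar\mu < \infty$ (Kac) together with the integrability $-\int \phi\,d\mu < \infty$ built into the equilibrium assumption; and (iii) that the pull-back construction genuinely yields the pair $(m_\phi, h_\phi)$ of Theorem~\ref{thm:RPF}, which amounts to checking the transfer operator identities on cylinders and extending by continuity. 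All of this is standard in Sarig's framework, and the finiteness $P_G(\phi)<\infty$ together with the H\"older hypothesis exactly supply the bounds needed to close the argument.
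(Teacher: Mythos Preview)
Your approach is essentially the same as the paper's: induce on a positive-measure state, use Abramov to see that the induced measure $\bar\mu$ has free energy zero for $\bar\phi$, invoke the full-shift (Mauldin--Urba\'nski/Sarig BIP) uniqueness to identify $\bar\mu$ with the Gibbs measure $\bar h\,d\bar m$, and then pull back and contradict Theorem~\ref{thm:RPF}.

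There is, however, one genuine step you skip. You write ``so $\bar\mu$ is an equilibrium state at Gurevich pressure zero for $\bar\phi$'', but Abramov only gives $h(\bar\mu)+\int\bar\phi\,d\bar\mu=0$, hence $P_G(\bar\phi)\ge 0$; you never argue $P_G(\bar\phi)\le 0$. Without this, $\bar\mu$ need not be an equilibrium state, and more seriously, if $P_G(\bar\phi)=\infty$ the Mauldin--Urba\'nski machinery does not even apply. Your item (i) gestures at this with ``$\sup\bar\phi<\infty$'', but that condition is neither obviously true (induced potentials are Birkhoff sums of variable length and can be unbounded) nor sufficient to force finite pressure on a countable full shift. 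The paper closes this gap explicitly: if $P_G(\bar\phi)>0$, the Approximation Property (Proposition~\ref{prop:approx}) yields a compact $\bar\sigma$-invariant subset with strictly positive pressure and an equilibrium state $\bar\nu$ on it; projecting $\bar\nu$ to $\Sigma$ and applying Abramov then gives an invariant measure with $h+\int\phi>P_G(\phi)$, contradicting the Variational Principle. Once you insert this argument, your proof coincides with the paper's.
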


\begin{proof}
We may assume that $\phi$ has $P(\phi)=0$, otherwise we can shift by $P(\phi)$. Now we use an inducing argument.  We fix a state $a\in S$, and derive the induced system $(\overline{X}, \overline{\sigma}, \overline{\phi})$ as a first return map to $a$.  This is the full shift on countably many symbols, which, as shown in \cite{muGIBBS} and \cite{SarBIP} has many strong properties.  We will use these to guarantee that we have no equilibrium state in the non-positive recurrent case.

We begin by noting that \cite{muGIBBS} and \cite{SarBIP}, since $(\overline{X}, \overline{\sigma})$ is the full shift, $\overline\phi$ is necessarily positive recurrent whenever $P(\overline\phi)<\infty$ for any choice of $a\in S$.
We can in fact show that in all cases $P(\overline\phi) \le 0$. Indeed, if $P(\overline\phi)> 0$ then by Proposition~\ref{prop:approx} we can take a compact invariant subset of $(\overline X, \overline \phi)$ which still has strictly positive pressure and a corresponding equilibrium state $\overline\nu$.  By the Abramov formula for the projection $\nu$ of $\overline\nu$ to $\Sigma$, we have $h(\nu)+\int\phi~d\nu>0$ contradicting Proposition~\ref{prop:VarPri}.

Now if there is an equilibrium state $\mu$ (hence with unit mass)
for $\phi$ then $h(\mu)+\int\phi~d\mu=0$.  Let $a$ be a state which is given positive mass by $\mu$ and let $\overline\mu$ be the rescaled measure on $[a]$.  Then the Abramov formula implies that $h(\overline\mu)+\int\overline\phi~d\overline\mu=0$ and so Proposition~\ref{prop:VarPri} implies that $P(\overline\phi)\ge 0$.  Thus $P(\overline\phi)= 0$.  We now apply \cite[Corollary 2]{Sartherm}, which when added to part (2) of \cite[Corollary 2]{SarBIP}, says that any equilibrium state for $\overline\phi$ must be of the form obtained in Theorem~\ref{thm:eq}.
Thus $d\overline\mu=h_{\overline\phi}dm_{\overline\phi}$ where $L_{\overline\phi}h_{\overline\phi}=h_{\overline\phi}$ and $L_{\overline\phi}^*m_{\overline\phi}=m_{\overline\phi}$. The functional form of the Kac's Lemma, shown in \cite[Lemma 3]{Sarphase} implies that $\mu$ must also be of this form (\ie $d\mu= h_{\phi}dm_{\phi}$ where
$L_{\phi}h_{\phi}=h_{\phi}$ and  $L_{\phi}^*m_{\phi}=m_{\phi}$), which by
Theorem~\ref{thm:RPF} contradicts the assumption that $\mu$ was not positive recurrent.
\end{proof}

We can now use the theory for thermodynamic formalism for countable Markov shifts to prove Theorem~\ref{thm:main pressure formula} and Corollary~\ref{cor:main all press}.  These follow almost immediately from the results presented in this and in previous sections since, as shown below, the transfer operator can be interpreted in terms of the matrix $B^t$.

\begin{proof}[Proof of Theorem~\ref{thm:main pressure formula}]
We start by clarifying the link between transfer operators for simple potentials and their matrix representations.  Let $(\Sigma, \sigma)$ be a CMS where, for simplicity, we take $\Sigma=\N^{\N_0}$.  Then given a potential $\phi:\Sigma \to \R$ which only depends on one coordinate (\eg $V_1(\phi)=0$), one can form the corresponding infinite matrix $D=D_\phi=(d_{i,j})_{i,j \in \N}$ as $d_{i, j}=\phi(i)$ for all $i\in \N$.  Now for a function $\xi:\Sigma\to \R$ which depends only on one coordinate, we define $\underline\xi$ to be the vector $(\xi(1), \xi(2),\ldots)$, and $\underline{e}_i$ to be the row vector with all zeros except in the $i$-th entry, which is 1.  Then we can compute that for any $x$ in the $1$-cylinder $[i]$,
\begin{equation}
(L_{\phi}\xi)(x)=\underline{e}_i(\underline{\xi}\cdot D) \text{ and } (L_{\phi}^*\xi)([i])= (D \cdot\underline{\xi}^T)\underline{e}_i.
\label{eq:trans mat}
\end{equation}
Thus the leading eigenvalue of the matrix $D$ is the exponential of the
Gurevich pressure of $\phi$.

The fact that the leading eigenvalue of $B^t$ is the exponential of the pressure follows from \eqref{eq:trans mat} and thus the expression for $P(\Phi_t)$ follows from Theorem~\ref{thm:B ev}. The fact that the pressure function is not $C^2$ at $t_0=\frac{-\log2}{\log\lambda}$ follows since by Lemma~\ref{lem:psi}, $D^2\psi(t_0)>0$.

The existence of $\mu_t$ when $\lambda^t<1/2$ follows from Proposition~\ref{prop:mut}.  Uniqueness follows from Theorem~\ref{thm:RPF}. The fact that $\mu_t$ is an equilibrium state follows from Theorem~\ref{thm:eq}.  The non-existence of an equilibrium state when $\lambda^t\ge 1/2$ follows from
Proposition~\ref{prop:no eq}.
\end{proof}

We finish this section with the proof of Corollary~\ref{cor:main all press}.

\begin{proof}[Proof of Corollary~\ref{cor:main all press}]
Let us recall equation \eqref{eq:press eq}:
$$
\begin{array}{l}
P(\Phi_t) = P_G(\Phi_t)=\ptop(\Phi_t)=\Pconf (\Phi_t) = \log \sigma (B^t) \\[3mm]
\qquad \quad  = \lim_{K\to \infty}\log \sigma (B_K^t) =
\lim_{K\to \infty}\log \sigma (A_K^t).
\end{array}
$$
The first equality follows by Proposition~\ref{prop:VarPri}.  The second follows by Proposition~\ref{prop:class eq gur}.  The third and fourth follow by Theorem~\ref{thm:RPF}.  The fifth follows by Proposition~\ref{prop:approx} and the sixth follows by Lemma~\ref{lem:charA=charB}.
If $t = 0$, Theorem~\ref{thm:B ev} gives $\lim_{K\to \infty}\log \sigma (B_K) =
\lim_{K\to \infty}\log \sigma (A_K) = \log 4$.
\end{proof}

\section{Null recurrent case}
\label{sec:null rec}

\begin{lemma}
If $\lambda^t=1/2$ then $((0,1], F_\lambda, \Phi_t)$ is null recurrent.
\end{lemma}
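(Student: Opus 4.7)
The plan is to verify the two defining ingredients of null recurrence separately: first, that the conformal reference measure $m_t$ is conservative (so the system is recurrent), and second, that there is no $F_\lambda$-invariant probability measure absolutely continuous with respect to $m_t$ (so the system fails to be positive recurrent). Recurrence is immediate: since $\lambda^t = \tfrac12 \le \tfrac12$, Theorem~\ref{thm:main diss/cons} yields a $(t,\Pconf(\Phi_t))$-conformal measure $m_t$ that is conservative, which by the definitions recalled in the introduction is exactly what recurrence means here.

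For the second step I will mimic Proposition~\ref{prop:mut}. Because $F_\lambda$ is Markov and linear on each $W_k$ and $m_t$ is (up to a multiplicative constant) Lebesgue measure on each $W_k$, any $F_\lambda$-invariant probability $\mu \ll m_t$ descends to a positive summable left-eigenvector for eigenvalue $1$ of the stochastic matrix $\psi^{-1}(t) A^t$. Now the Perron--Frobenius-type recurrence analysis carried out in the proof of Theorem~\ref{thm:acip} applies verbatim with $\lambda$ replaced by $\lambda^t$: writing $v_i = \beta_i \rho^i$, the only non-exponentially-growing choice is $\rho = \lambda^t/(1-\lambda^t)$ with $\beta_i \equiv \beta_1$, giving the candidate
$$
v^t_i = \frac{1-2\lambda^t}{\lambda^t}\left(\frac{\lambda^t}{1-\lambda^t}\right)^i.
$$
At the critical value $\lambda^t = \tfrac12$ we have $\rho = 1$, so the candidate degenerates to the constant vector, which is not summable over $\N$. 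Hence no invariant probability measure absolutely continuous with respect to $m_t$ exists, and the system is not positive recurrent.

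As a sanity check I will also note the probabilistic picture, which is the same one underpinning the proof of Theorem~\ref{thm:acip}: the induced random walk on $\N$ with transition matrix $\psi^{-1}(t) A^t$ has drift $\drift = (2\lambda^t-1)/(1-\lambda^t)$, which vanishes exactly at $\lambda^t = \tfrac12$, together with a finite second moment. This is the textbook null-recurrent regime for a random walk, so the walk returns almost surely to every state (confirming conservativity/recurrence) but with infinite mean return time, and the unique (up to scaling) invariant measure is counting measure, again non-normalizable.

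The main obstacle is the uniqueness claim: one must be sure that the constant vector is the \emph{only} candidate, not merely a natural guess. This is handled by the fact that on an irreducible countable Markov chain the invariant $\sigma$-finite measure is unique up to scalars (equivalently, invoke Theorem~\ref{thm:RPF}, which guarantees, once recurrence is known, a unique conservative $\sigma$-finite invariant measure whose finiteness is the test for positive recurrence), so having exhibited one invariant vector of infinite total mass finishes the proof.
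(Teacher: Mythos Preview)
Your argument for the ``null'' half (no invariant probability $\mu\ll m_t$) is correct and coincides with the paper's: both compute that $\int_{W_i} h_\phi\,dm_\phi$ (equivalently, the candidate $v_i^t$) is constant in $i$ at $\lambda^t=\tfrac12$, hence non-summable, and then invoke Theorem~\ref{thm:RPF}.

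The gap is in the recurrence half. Your primary argument invokes Theorem~\ref{thm:main diss/cons} to get conservativity of $m_t$ at $\lambda^t=\tfrac12$, but if you look at the proof of that theorem at the end of Section~\ref{sec:Conformal}, the boundary case $\lambda^t=\tfrac12$ is explicitly \emph{deferred} to Section~\ref{sec:null rec}, i.e.\ to this very lemma. So the citation is circular. Your fallback ``sanity check'' is the right idea---the $m_t$-random walk on $\N$ has zero drift and finite variance at $\lambda^t=\tfrac12$---but as written it is an appeal to a ``textbook'' fact rather than a proof; you would need to actually invoke something like the Chung--Fuchs theorem (adapted to $\N$ with the boundary behaviour at state $1$) or a Lamperti-type criterion to conclude recurrence.

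The paper avoids this entirely by taking a direct route: it computes the local partition function $Z_k(\Phi_t,[1])$ as the $(1,1)$-entry of $D^k$ for the explicit stochastic matrix $D=e^{-P_G}B^t$ at $\lambda^t=\tfrac12$, proves by induction that these entries are ratios of binomial coefficients along diagonals of Pascal's triangle, and uses Stirling to get $Z_k\asymp k^{-1/2}$, so $\sum_k e^{-kP_G}Z_k=\infty$ and recurrence follows from the definition~\eqref{eq:recurrence}. This is more computational but self-contained. Your random-walk route would also work and is arguably more conceptual, but you need to supply the missing recurrence argument rather than cite Theorem~\ref{thm:main diss/cons}.
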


\begin{proof}
Since $Z_k(\Phi_t,A_1)= \underline1\cdot D^{k-1}\cdot(1, 0, 0,
\ldots)^T$ for
$$
D := \left( \begin{array}{ccccccc}
\frac12      & \frac12 & \frac12 & \frac12 & \frac12 & \hdots   & \hdots \\
\frac14      & \frac14 & \frac14 & \frac14 & \frac14 & \hdots   & \hdots \\
0      & \frac14      & \frac14   & \frac14 & \frac14 & \frac14 & \hdots \\
0      & 0      & \frac14        & \frac14   & \frac14 & \frac14 & \hdots \\
\vdots & \vdots & 0        & \frac14        & \frac14   & \frac14 & \hdots \\
\vdots & \vdots & \vdots   & \vdots   & \vdots   & \vdots   & \ddots
\end{array} \right),
$$
the lemma can be proved by determining the form of the first column of
the matrices $D^{n-1}$ (although, of course, we only really care about
the term in the top left corner).  Note that the leading eigenvalue of
this matrix is $1$,
so a priori, the terms of interest could decrease at any subexponential rate.

\begin{claim}
For $k\ge 1$, we have
\begin{equation*}
(D^k)_{i, 1}=\begin{cases} p_{k, k-i+1}/2^{2k-1} & \text{ if } i=1\\
p_{k, k-i+2}/2^{2k} & \text{ if } 2\le i\le k+1\\
0 & \text{ if } i> k+1
\end{cases},
\end{equation*}
for binomial coefficients $p_{k,i}:=\binom{k+2(i-1)}{2(i-1)}$.
\end{claim}

\begin{proof}
We denote the first column of $D^n$ by $\underline{v}^n$.  The columns $\underline{v}^1, \ldots, \underline{v}^5$ are:
$$
\left( \begin{array}{c} \frac12 \\ \frac1{2^2}\\ 0\\ 0\\ 0\\ 0\\ 0\\ \vdots \end{array}\right),
\left( \begin{array}{c} \frac3{2^3}\\ \frac3{2^4} \\ \frac1{2^4}\\ 0\\ 0\\ 0\\ 0\\ \vdots \end{array}\right),
\left( \begin{array}{c} \frac{10}{2^5} \\ \frac{10}{2^6} \\ \frac{4}{2^6}  \\ \frac{1}{2^6} \\ 0\\ 0\\ 0\\ \vdots \end{array}\right),
\left( \begin{array}{c} \frac{35}{2^7} \\ \frac{35}{2^8} \\ \frac{15}{2^8} \\ \frac{5}{2^8} \\ \frac{1}{2^8} \\ 0\\ 0\\ \vdots \end{array}\right),
\left( \begin{array}{c} \frac{126}{2^{9}} \\ \frac{126}{2^{10}}\\ \frac{56}{2^{10}}\\ \frac{21}{2^{10}}\\ \frac{6}{2^{10}}\\ \frac{1}{2^{10}}\\ 0\\ \vdots \end{array}\right).$$
Let us denote the numerator of the $i$-th entry of $\underline{v}^k$ by $n_{k,i}$.  We obtain the following relations:
\begin{equation*}
n_{k,i} =\begin{cases}  2n_{k-1, 1}+n_{k-1, 2}+n_{k-1, 3}+\dots +n_{k-1, n} & \text{for }  i=1, 2,\\
   n_{k-1, i-1}+n_{k-1, 2}+n_{k-1, 3}+\dots +n_{k-1, n} & \text{for }  3\le i\le k+1.
\end{cases}
\end{equation*}
Clearly the denominator is $2^{2k-1}$ for $i=1$ and $2^{2k}$ for $0\le i\le k+1$.
The claim follows by the observation that the formula for $p_{k, k-i+2}$ is the same as that for $n_{k, i}$.

Note that another way to prove this is by examining the recursive relations in Pascal's triangle - the terms $p_{k, k-i+2}$ can be observed on the $(k+1)$-st diagonal.
\end{proof}

Stirling's formula gives $p_{k,k} \approx (1/2) 2^{2k}/\sqrt{2\pi k}$.
 Therefore, the claim implies that $Z_k(\Phi_t,A_1) \ge C/\sqrt{k}$ for some $C>0$, and so the system is indeed recurrent.

To prove null recurrence, we appeal to Theorem~\ref{thm:RPF}.  Given $\rho_t$ the eigenfunction for $L_{\Phi_t}$ and $m_t$, the $(t, \Pconf(\Phi_t))$-conformal measure,  it suffices to show that $\int\rho_t~dm_t=\infty$.  These have been computed earlier and combine to give $\int_{A_i} \rho_t~dm_t = \left(\frac{\lambda^t}{1-\lambda^t}\right)^{i-1}$.  (Note we can rescale $m_t$ and $\rho_t$, but not in a way which would change our result.)  Since in this case $\lambda^t=1/2$, we obtain $\int \rho_t~dm_t=\infty$  as required.
\end{proof}


\begin{thebibliography}{[BKNS]}

\bibitem[Aa]{Aaro_book} J.\ Aaronson,  \emph{An introduction to infinite ergodic theory,} Mathematical Surveys and Monographs, {\bf 50}, American Mathematical Society, Providence, RI, 1997.

\bibitem[BS]{BoSu} J.\ Bobok, M.\ Soukenka,
{\em On piecewise affine interval maps with countably many laps,}
Preprint 2010.

\bibitem[Bo]{Bo_balls} R.\ Bowen,
\emph{Entropy for group endomorphisms and homogeneous spaces,}
 Trans.\ Amer.\ Math.\ Soc.\ \textbf{153} (1971) 401--414.

\bibitem[BKNS]{BKNS} H.\ Bruin, G.\ Keller, T.\ Nowicki, S.\ van Strien,
{\em Wild Cantor attractors exist,}
Ann.\ of Math.\ {\bf 143} (1996) 97--130.

\bibitem[G1]{Gutopent} B.\ M.\ Gurevi\v c,
\emph{Topological entropy for denumerable Markov chains,}
Dokl.\ Akad.\ Nauk SSSR {\bf 10} (1969) 911--915.

\bibitem[G2]{Gushiftent} B.\ M.\ Gurevi\v c,
\emph{Shift entropy and Markov measures in the path space of a
denumerable graph,}
Dokl.\ Akad.\ Nauk SSSR {\bf 11} (1970) 744--747.

\bibitem[IT]{ITtransient} G.\ Iommi, M.\ Todd,
\emph{Transience in dynamical systems,}  Ergodic Theory Dynam.\ Systems
 \textbf{33} (2013) 1450--1476.

\bibitem[LY]{LY} A.\ Lasota, J.\ Yorke,
{\em On the existence of invariant measures for piecewise monotonic transformations,}
Trans.\ Amer.\ Math.\ Soc.\ {\bf 186} (1973) 481--488.


\bibitem[MU1]{MUifs} R.\ Mauldin, M.\ Urba\'nski,
\emph{Dimensions and measures in infinite iterated function systems,}
Proc.\ London Math.\ Soc.\ (3) \textbf{73} (1996) 105--154.

\bibitem[MU2]{muGIBBS} R.\ Mauldin, M.\ Urba\'nski,
\emph{Gibbs states on the symbolic space over an infinite alphabet,}
Israel J.\ Math.\ \textbf{125} (2001) 93--130.


\bibitem[MR]{MisRai} M.\ Misiurewicz, P.\ Raith,
{\em Strict inequalities of the entropies of transitive piecewise monotone maps,}
Discrete Contin. Dyn. Syst. {\bf 13} (2003), 451--468.

\bibitem[MS]{MorSma} C.\ G.\ Moreira, D.\ Smania,
{\em Metric stability for random walks (with applications in renormalization
theory),} Preprint, 2005 (updated 2009), arXiv:math/0503736.

\bibitem[PRS]{PrzR-LSm} F.\ Przytycki, J.\ Rivera-Letelier, S.\ Smirnov,
    \emph{Equality of pressures for rational functions,}
Ergodic Theory Dynam.\ Systems \textbf{24} (2004) 891--914.

\bibitem[R1]{Ru_press} D.\ Ruelle,
\emph{Statistical mechanics on a compact set with $Z^\nu$ action satisfying expansiveness and specification,}
 Trans. Amer. Math. Soc. \textbf{187} (1973) 237--251.

\bibitem[R2]{Ru_book} D.\ Ruelle,
\emph{Thermodynamic Formalism,}
Encyclopedia of Mathematics and its Applications, 5. Addison-Wesley, Reading, MA, 1978.

\bibitem[Rt]{Ruette} S.\ Ruette,
{\em Mixing $C^r$ maps of the interval without maximal measure,}
Israel J.\ Math.\ {\bf 127} (2002), 253--277.

\bibitem[Sl]{Salama} I.\ Salama,
{\em Topological entropy and recurrence of countable chains,}
Pacific J.\ Math.\ {\bf 134} (1988) 325--341.

\bibitem[S1]{Sartherm} O.\ Sarig,
{\em Thermodynamic formalism for countable Markov shifts,}
Ergodic Theory Dynam.\ Systems {\bf 19} (1999) 1565--1593.

\bibitem[S2]{Sarnull} O.\ Sarig,
 \emph{Thermodynamic formalism for null recurrent potentials,} Israel J. Math. \textbf{121} (2001) 285--311.

\bibitem[S3]{Sarphase} O.\ Sarig,
\emph{Phase transitions for countable Markov shifts,}
Comm.\ Math.\ Phys.\ {\bf 217} (2001) 555--577.

\bibitem[S4]{SarBIP} O.\ Sarig,
\emph{Existence of Gibbs measures for countable Markov shifts,}
Proc.\ Amer.\ Math.\ Soc.\ {\bf 131} (2003) 1751--1758.


\bibitem[S5]{sanotes}  O.\ Sarig, \emph{Lectures Notes on Thermodynamic Formalism for Topological Markov Shifts} (2009).

\bibitem[SS]{SchStr} J.\ Schmeling, B.\ Stratmann,
{\em The Hausdorff dimension of the set of dissipative points for a Cantor-like model set for singly cusped parabolic dynamics,}
Kodai Mathematical Journal {\bf 32} (2009) 179--196.

\bibitem[SV]{StraVogt} B.\ Stratmann, R.\ Vogt,
{\em Fractal dimension of dissipative sets,}
Nonlinearity {\bf 10} (1997) 565--577.

\bibitem[W1]{Walt_press} P.\ Walters,
\emph{A variational principle for the pressure of continuous transformations,} Amer. J. Math. \textbf{97} (1975) 937--971.

\bibitem[W2]{Waltbook} P.\ Walters,
\emph{An Introduction to Ergodic Theory,}
Graduate Texts in Mathematics {\bf 79}, Springer, 1981.


\end{thebibliography}
\end{document}